\numberwithin{equation}{section}
\newtheorem{theorem}[equation]{Theorem}
\newtheorem{proposition}[equation]{Proposition}
\newtheorem{lemma}[equation]{Lemma}
\newtheorem{corollary}[equation]{Corollary}
\theoremstyle{definition}
\newtheorem{rmk}[equation]{Remark}
\newenvironment{remark}[1][]{\begin{rmk}[#1] \pushQED{\qed}}{\popQED \end{rmk}}
\newtheorem{eg}[equation]{Example}
\newenvironment{example}[1][]{\begin{eg}[#1] \pushQED{\qed}}{\popQED \end{eg}}
\newtheorem{defn}[equation]{Definition}
\newenvironment{definition}[1][]{\begin{defn}[#1]\pushQED{\qed}}{\popQED \end{defn}}
\newtheorem{ques}[equation]{Question}
\newcommand{\cK}{\mathcal{K}}
\newcommand{\cM}{\mathcal{M}}
\newcommand{\cO}{\mathcal{O}}
\newcommand{\cP}{\mathcal{P}}
\newcommand{\cQ}{\mathcal{Q}}
\newcommand{\bR}{\mathbf{R}}
\newcommand{\cR}{\mathcal{R}}
\newcommand{\cT}{\mathcal{T}}
\newcommand{\cY}{\mathcal{Y}}
\newcommand{\cZ}{\mathcal{Z}}
\newcommand{\bd}{\mathbf{d}}
\newcommand{\be}{\mathbf{e}}
\newcommand{\br}{\mathbf{r}}
\newcommand{\bs}{\mathbf{s}}
\newcommand{\za}{\ensuremath{\alpha}}
\newcommand{\zb}{\ensuremath{\beta}}
\newcommand{\zl}{\ensuremath{\lambda}}
\newcommand{\ZZ}{\mathbb{Z}}
\newcommand{\bbA}{\mathbb{A}}
\newcommand{\bbD}{\mathbb{D}}
\newcommand{\SSS}{\mathbb{S}}
\renewcommand{\phi}{\varphi}
\renewcommand{\emptyset}{\varnothing}
\renewcommand{\tilde}[1]{\widetilde{#1}}
\newcommand{\ol}[1]{\overline{#1}}
\newcommand{\setst}[2]{\left\{ #1 \mid #2 \right\}}
\newcommand{\comment}[1]{}
\def\Ddots{\mathinner{\mkern1mu\raise\p@
\vbox{\kern7\p@\hbox{.}}\mkern2mu
\raise4\p@\hbox{.}\mkern2mu\raise7\p@\hbox{.}\mkern1mu}}
\DeclareMathOperator{\im}{image}
\DeclareMathOperator{\rad}{rad}
\DeclareMathOperator{\rank}{rank}
\DeclareMathOperator{\End}{End}
\DeclareMathOperator{\Sym}{Sym}
\DeclareMathOperator{\gr}{gr}
\DeclareMathOperator{\module}{mod}
\DeclareMathOperator{\rep}{rep}
\DeclareMathOperator{\Mat}{Mat}
\DeclareMathOperator{\Gr}{Grass}
\newcommand{\op}{\operatorname}
\newcommand{\oo}{\otimes}
\newcommand{\onto}{\twoheadrightarrow}
\newcommand{\into}{\hookrightarrow}
\DeclareMathOperator{\charac}{char}
\DeclareMathOperator{\soc}{soc}
\newcommand{\kk}{\Bbbk}
\begin{document}
\title{Representation varieties of algebras with nodes}
\author{Ryan Kinser}
\address{University of Iowa, Department of Mathematics, Iowa City, IA, USA}
\email[Ryan Kinser]{ryan-kinser@uiowa.edu}

\author{Andr\'as C. L\H{o}rincz}
\address{Humboldt--Universit\"at zu Berlin, Institut f\"ur Mathematik, Berlin, Germany}
\email[Andr\'as C. L\H{o}rincz]{lorincza@hu-berlin.de}

\begin{abstract}
We study the behavior of representation varieties of quivers with relations under the operation of node splitting. We show how splitting a node gives a correspondence between certain closed subvarieties of representation varieties for different algebras, which preserves properties like normality or having rational singularities. 
Furthermore, we describe how the defining equations of such closed subvarieties change under the correspondence.

By working in the ``relative setting'' (splitting one node at a time), we demonstrate that there are many non-hereditary algebras whose irreducible components of representation varieties are all normal with rational singularities.
We also obtain explicit generators of the prime defining ideals of these irreducible components. 
This class contains all radical square zero algebras, but also many others, as illustrated by examples throughout the paper.
We also show the above is true when replacing irreducible components by orbit closures, for a more restrictive class of algebras.
Lastly, we provide applications to decompositions of moduli spaces of semistable representations of certain algebras.
\end{abstract}

\subjclass[2020]{16G20, 13C40, 14M12}

\keywords{Representations of algebras, quivers, determinantal varieties, determinantal ideals, rational singularities, node splitting, moduli spaces}

\maketitle



\section{Introduction}
\subsection{Context and motivation}\label{sec:context}
Throughout, $\kk$ is an algebraically closed field.  We explicitly mention when it is necessary to specialize to characteristic 0.
The algebras we study are those of the form $A=\kk Q/I$ where $Q$ is a quiver and $I$ a two-sided ideal. 

Each dimension vector $\bd$ for $A$ determines a representation variety $\rep_A(\bd)$ with action of a product of general linear groups $GL(\bd)$ (see Section \ref{sec:repvariety}).
Orbit closures in $\rep_A(\bd)$ have remarkable connections with the representation theory of $A$ and related objects; 
see surveys such as \cite{Bongartzsurvey,Zwarasurvey,HZsurvey} for detailed treatments.
Interest in these varieties is not confined to representation theory of algebras, however: they also naturally arise in Lie theory, commutative algebra, and algebraic geometry.  The interested reader may consult the introduction to \cite{KinserICRA} for more detail and references.

Even restricting our attention to representation theory of algebras, geometric methods centered around the varieties $\rep_A(\bd)$, such as the construction of moduli spaces (see Section \ref{sec:moduli}), provide a toolkit for classification of representations which is complementary to homological and functorial approaches.

The purpose of this paper is to relate representation varieties for algebras related by splitting nodes, which we  informally recall here. Node splitting is a well-known technique which was introduced by Mart\'inez-Villa \cite{MV80}; see Section \ref{sec:nodesplit} for details.
In this paper, a \emph{node} of an algebra $A=\kk Q/I$ is a vertex $x$ of $Q$ such that all the paths of length $2$ passing strictly through $x$ belong to $I$. (The traditional definition of ``node'' further requires that $x$ be neither a sink nor source.)
A node $x$ of $A$ can be \emph{split} by the following local operation around $x$:
\begin{equation}
\vcenter{\hbox{\begin{tikzpicture}[point/.style={shape=circle,fill=black,scale=.5pt,outer sep=3pt},>=latex]
   \node[point] (y1) at (-2,1) {};
  \node[point] (y2) at (-1,1) {};
  \node[outer sep=-2pt] at (0,1) {${\cdots}$};
  \node[point] (yr) at (1,1) {};
   \node[label={[label distance=0.5cm]180:$\cdots$}] (x) at (-1,0) {${x}$};
   \node[point] (z1) at (-2,-1) {};
  \node[point] (z2) at (-1,-1) {};
  \node[outer sep=-2pt] at (0,-1) {${\cdots}$};
  \node[point] (zs) at (1,-1) {};
\draw[->] (x) to [out=155,in=205,looseness=10] (x);
\draw[->] (x) to [out=150,in=210,looseness=20] (x);
  \path[->]
	(y1) edge  (x)
	(y2) edge  (x)
	(yr) edge  (x)
  	(x) edge  (z1)
	(x) edge  (z2)
	(x) edge  (zs);
   \end{tikzpicture}}}
\qquad\rightsquigarrow \qquad
\vcenter{\hbox{\begin{tikzpicture}[point/.style={shape=circle,fill=black,scale=.5pt,outer sep=3pt},>=latex]
   \node[point] (y1) at (-1,1) {};
  \node[point] (y2) at (0,1) {};
  \node[outer sep=-2pt] at (1,1) {${\cdots}$};
  \node[point] (yr) at (2,1) {};
   \node[outer sep=-2pt] (xt) at (-1,0) {${x_t}$};
   \node[outer sep=-2pt] (xh) at (1,0) {${x_h}$};
   \node[point] (z1) at (-2,-1) {};
  \node[point] (z2) at (-1,-1) {};
  \node[outer sep=-2pt] at (0,-1) {${\cdots}$};
  \node[point] (zs) at (1,-1) {};
  \node at (0,.1) {$\vdots$};
  \path[->]
	(y1) edge  (xh)
	(y2) edge  (xh)
	(yr) edge  (xh)
  	(xt) edge  (z1)
	(xt) edge  (z2)
	(xt) edge  (zs);
\draw[->,bend left=20] (xt.20) to (xh.160);
\draw[->,bend right=20] (xt.-20) to (xh.200);
   \end{tikzpicture}}}
\end{equation}
resulting in a new algebra $A^x$ with one fewer node (disregarding sources and sinks). 
Algebraically, it is easy to see that the category of left $A$-modules is equivalent to the full subcategory of left $A^x$-modules whose objects have no direct summand isomorphic to the simple at $x_h$.
Geometrically, however, the representation varieties can be quite different, as the bijection on isomorphism classes translates to a bijection of orbits, but not an isomorphism of varieties (even after stratification). See Example \ref{ex:stratfail} for more detail.

The technique of this paper is to relate the representation varieties of $A$ and $A^x$ by a homogeneous fiber bundle construction and ``collapsing maps'' in the sense of Kempf \cite{Kempf76}.  This allows us to relate their singularities, in particular whether they are normal or have rational singularities, and relate defining equations of the prime ideals of equivariant closed subvarieties as well.

A special case of particular interest is when every vertex of $A$ is a node, which is precisely when $\rad^2 A = 0$.  Such algebras are historically significant because they are the first step beyond semisimple algebras.  Interesting remarks on the importance of radical square zero algebras and their associated graphs in the development of the modern representation theory of algebras can be found in the volume of Gabriel and Roiter \cite[\S\S 7.8, 8.7]{GREMS}.
Turning to the history of geometry of representations of algebras, one can consider Buchsbaum-Eisenbud varieties of complexes as representation varieties of the radical square zero algebra associated to the quiver $\bullet \to \bullet \to \cdots \to \bullet$.  These varieties were studied extensively in the 1970s and results on them were eventually generalized beyond the radical square zero case (for this particular quiver) \cite{ADK81,LM98}.  See the introduction of \cite{KR} for more remarks and references on this.  So from both representation theoretic and geometric perspectives, we can see the radical square zero case as an important starting point for deeper developments.

While radical square zero algebras are an important special case, we emphasize that our results apply more generally; see Examples \ref{ex:irrcomp}, \ref{ex:gendecomp}, \ref{ex:singularity}, \ref{ex:singularity3}, \ref{ex:rad2moduli}, and \ref{ex:moduli}.

\subsection{Statement of main results}\label{sec:results}
Our main results require that $\charac\kk=0$, aside from one exception noted after Corollary \ref{cor:radsquarezero}, so we make this assumption for this subsection.
Let $A=\kk Q/I$ be an algebra with node $x$.
We start by giving a precise geometric description of how representation varieties of $A$ and $A^x$ are related in Section \ref{sec:nodebundles}. 

Given an irreducible $GL(\bd)$-equivariant closed subvariety $C \subset \rep_A(\bd)$ (e.g. an irreducible component or orbit closure), Proposition \ref{prop:bijection} uniquely associates to it an irreducible $GL(\be)$-equivariant closed subvariety $C^x \subset \rep_{A^x}(\be)$
for a dimension vector $\be$ of $A^x$.
Typically, $C^x$ is geometrically easier to understand than $C$ because $A^x$ is defined by fewer relations on a path algebra than $A$.   We make this precise in the following two theorems.
Our first main result is the following theorem on singularities.

\begin{theorem}\label{thm:mainsing}
The variety $C^x$ is normal if and only if the variety $C$ is normal. Moreover, $C^x$ has rational singularities if and only if $C$ has rational singularities.
\end{theorem}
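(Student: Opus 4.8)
The plan is to deduce both equivalences from the geometry set up in Section~\ref{sec:nodebundles}. Recall that the correspondence $C\mapsto C^x$ is realized there by means of an auxiliary $GL(\bd)$-variety $\widehat C$ carrying two equivariant morphisms: a \emph{collapsing map} $\pi\colon\widehat C\to\rep_A(\bd)$ in the sense of Kempf, which is proper and restricts to a birational morphism onto $C$ (part of Proposition~\ref{prop:bijection}); and a projection $p\colon\widehat C\to F$ onto a (partial) flag variety $F$ whose fibers are affine-space bundles over $C^x$. In particular, Zariski-locally $\widehat C$ is isomorphic to the product of a smooth variety with an open subvariety of $C^x$. Since normality and the property of having rational singularities are Zariski-local, are preserved under products with a smooth variety, and pass to a factor of such a product (for normality via the conditions $R_1$ and $S_2$; for rational singularities via stability under smooth morphisms and faithfully flat descent), this local description gives at once that $\widehat C$ is normal (resp.\ has rational singularities) if and only if $C^x$ is. It therefore remains to compare $\widehat C$ and $C$ along $\pi$.

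For the forward direction the essential input is the identity $R\pi_*\cO_{\widehat C}=\cO_C$; I would establish it by arguing relatively over the flag bundle $p$, where the fibers of $\pi$ are products of Schubert-type varieties, via relative Grauert--Riemenschneider / Kawamata--Viehweg vanishing --- this is the step where the hypothesis $\charac\kk=0$ enters essentially. Granting it: if $\widehat C$ is normal, the Stein factorization $\widehat C\to\Spec_C\pi_*\cO_{\widehat C}\to C$ identifies $C$ with its own normalization (the middle term is normal, being the relative spectrum of the algebra of functions pulled up to the normal variety $\widehat C$ along a morphism with connected fibers, and it is finite and birational over $C$), so $C$ is normal; and once $C$ is normal, rational singularities of $\widehat C$ together with $R\pi_*\cO_{\widehat C}=\cO_C$ force rational singularities of $C$, by Kempf's lemma (equivalently, by Boutot's theorem applied to $\pi$).

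For the converse direction I would use that $\pi$ is an isomorphism over the $GL(\bd)$-stable open subset $C^\circ\subseteq C$ on which the ranks of the maps attached to the node attain their generic values, and that over $C^\circ$ the assignment $C^\circ\to C^x$ sending a representation to its splitting is a smooth morphism with rational, homogeneous-space fibers, dominant onto $C^x$. Thus if $C$ is normal (resp.\ has rational singularities), then so is the open subvariety $C^\circ$, and faithfully flat descent along $C^\circ\to C^x$ yields the property on a dense open of $C^x$; the remaining, lower-dimensional boundary of $C^x$ is then handled by descending along $\pi$ once more, using that $R\pi_*\cO_{\widehat C}=\cO_C$ together with the fiber-bundle structure of $\widehat C$ constrains the defect of rational singularities (resp.\ of normality) of $C^x$ along the proper fibers of $\pi$ over $C\setminus C^\circ$. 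Combining these comparisons yields the two equivalences in the statement.

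The step I expect to be the main obstacle is the vanishing $R\pi_*\cO_{\widehat C}=\cO_C$: since $\widehat C$ is in general only as smooth as $C^x$, this is not literally the collapsing of a vector bundle over a smooth base, so the vanishing must be run relatively over $F$, with careful control of the Schubert-type fibers of $\pi$. A secondary difficulty is the converse direction --- upgrading ``good on a dense open of $C^x$'' to ``good on all of $C^x$'' --- where the finer structure of the collapsing over $C\setminus C^\circ$ is needed.
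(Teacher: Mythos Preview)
Your forward direction is essentially the paper's argument. The collapsing map $\pi\colon \widehat C=GL(\bd(x))\times_{P_r} C^x \to C$ (the $\Psi_C$ of Proposition~\ref{prop:birational}) places us exactly in the setting of Kempf \cite{Kempf76}, whose Proposition~1 and Theorem~3 yield normality and rational singularities of $C$ from those of $C^x$ directly. The paper simply cites Kempf after noting the one hypothesis that needs checking, namely that $P_r$ acts \emph{completely reducibly} on $C^x$ because its unipotent radical acts trivially via the embedding \eqref{eq:repAxembed}. Your plan to establish $R\pi_*\cO_{\widehat C}=\cO_C$ by hand (via vanishing along Schubert-type fibers) amounts to reproving Kempf's result in this case; it is doable but unnecessary.

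The converse direction, however, has a genuine gap. The open locus $C^\circ\cong \widehat C^\circ = GL(\bd(x))\times_{P_r}(C^x)^\circ$ of Proposition~\ref{prop:xrbundle} only fibers over the \emph{open} stratum $(C^x)^\circ\subsetneq C^x$, so any smooth descent from $C^\circ$ (even granting that your ``splitting'' map is well defined, which globally it is not) yields the desired property only on $(C^x)^\circ$. Your proposed extension to the boundary (``constrains the defect of rational singularities \ldots\ along the proper fibers of $\pi$ over $C\setminus C^\circ$'') is not an argument: there is no general mechanism by which rational singularities of the target $C$ of a proper birational map propagate to the source $\widehat C$, and the fibers of $\pi$ over $C\setminus C^\circ$ carry no a priori control on the singularities of $C^x$ along its lower strata. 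The paper's converse is entirely different and short: since the unipotent radical of $P_r$ acts trivially on $C^x$, one has $\kk[C^x]\cong \kk[C]^{U^-}$ (equation~\eqref{eq:invariant}), which exhibits $\kk[C^x]$ as a direct summand of $\kk[C]$ as a $\kk[C^x]$-module. Boutot's theorem \cite{boutot} then immediately transfers rational singularities (and, by an easier argument, normality) from $C$ to $C^x$. The structural input you are missing is precisely this direct-summand splitting.
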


Next, we describe how the prime defining ideals of irreducible components change under node splitting (for a more general statement, see Theorem \ref{thm:defideal}). For an arrow $\za \in Q_1$, we denote by $X_{\za}$ the $\bd(t\za)\times \bd(h\za)$ generic matrix of variables. We can identify the coordinate ring $\kk[\rep_{\kk Q}(\bd)]$ with a polynomial ring in the entries of the matrices $\{X_\za\}_{\za\in Q_1}$. For $x \in Q_0$, we write $H_x$ (resp. $T_x$) for the $\bd(x)\times \left(\displaystyle\sum_{h\za = x} \bd(t\za)\right)$ matrix (resp. $\left(\displaystyle\sum_{t\za = x} \bd(h\za)\right)\times \bd(x)$ matrix) obtained by placing all matrices $X_\za$ with $h\za = x$ next to (resp. with $t\za = x$ on top of) each other, in any order. 

\begin{theorem}\label{thm:maindef}
Let $\cP$ be a set of polynomials in $\kk[\rep_{\kk Q^x}(\be)]$ that generate the prime ideal defining $C^x$, and $r$ the rank of $H_x$ for a generic element in $C^x$. Then the prime ideal defining $C$ is generated by the following polynomials in $\kk[\rep_{\kk Q}(\bd)]$:
\begin{enumerate}
\item the $(r+1)\times (r+1)$ minors of $H_x$;
\item the $(\bd(x)-r+1) \times (\bd(x)-r+1)$ minors of $T_x$;
\item the entries of \, $T_x \cdot H_x$;
\item the trace of \,$X_{\gamma}$, for every loop $\gamma\in Q_1$ at $x$;
\item the $GL(\bd(x))$-saturation of the equations in $\cP$.
\end{enumerate}
\end{theorem}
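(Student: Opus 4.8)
The plan is to run the whole argument through the homogeneous fiber bundle and collapsing map constructed in Section~\ref{sec:nodebundles}. By Proposition~\ref{prop:bijection} the two dimension vectors are related by $\bd(x)=\be(x_h)+\be(x_t)$, $\be(x_h)=r$, and $\bd=\be$ away from $x$. Fix a decomposition $V_x=V'\oplus V''$ with $\dim V'=r$, let $P\subseteq GL(\bd(x))$ be the stabilizer of $V'$, and let $\cS\subseteq\rep_{\kk Q}(\bd)$ be the linear slice on which $H_x$ takes values in $V'$ and $T_x$ vanishes on $V'$; this $\cS$ is the image of the closed embedding of $\rep_{\kk Q^x}(\be)$ (with $V_{x_h}=V'$ and $V_{x_t}=V''$) into $\rep_{\kk Q}(\bd)$, and $P$ acts on $\cS$ through its Levi $L=GL(V')\times GL(V'')$, so that $\cS$-subvarieties stable under $GL(\be)$ are automatically $P$-stable. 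Put $\cE=GL(\bd(x))\times_P\cS$, a $GL(\bd(x))$-equivariant vector bundle over $\Gr(r,\bd(x))$, with collapsing map $p\colon\cE\to\rep_{\kk Q}(\bd)$. Then $p$ is proper and birational onto $\cZ:=p(\cE)=\{\rank H_x\le r,\ \rank T_x\le\bd(x)-r,\ T_xH_x=0\}$. Writing $C\subseteq\rep_{A^x}(\be)$ for the subvariety with $I(C)=\langle\cP\rangle$, viewed inside $\cS$ via the above embedding, the closed subbundle $\tilde C:=GL(\bd(x))\times_P C\subseteq\cE$ maps onto $C^x$ under $p$, and $p|_{\tilde C}\colon\tilde C\to C^x$ is proper and birational. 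Since $C^x$ is normal (Theorem~\ref{thm:mainsing}), $(p|_{\tilde C})_*\cO_{\tilde C}=\cO_{C^x}$; hence $I(C^x)$ is the kernel of $\kk[\rep_{\kk Q}(\bd)]\to\Gamma(\tilde C,\cO_{\tilde C})$, and the task is to compute that kernel.

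\emph{Step~1 (the ambient ideal)}. I would first treat the case $C=\rep_{\kk Q^x}(\be)$, $\cP=\emptyset$, and show that the ideal $J$ generated by the polynomials (1)--(4) equals $I(\cZ)$. Since $\cZ$ is irreducible and $V(J)=\cZ$ set-theoretically (immediate from the orbit description of $\cZ$, noting that $T_xH_x=0$ forces $X_\gamma^2=0$ hence $\trace X_\gamma=0$ for loops), it suffices to show $J$ is radical. By Kempf's collapsing theorem \cite{Kempf76}, $\cZ$ has rational singularities and $p_*\cO_\cE=\cO_\cZ$, so $\cO(\cZ)=\cO(\cE)=\bigoplus_{k\ge 0}H^0\big(\Gr(r,\bd(x)),\Sym^k\cE_0^\vee\big)$, where $\cE_0$ is the underlying vector bundle, whose fiber over $W\subseteq V_x$ collects $\Hom(V_{t\za},W)$ for non-loop arrows $\za\to x$, $\Hom(V_x/W,V_{h\zb})$ for non-loop arrows $x\to\zb$, $\Hom(V_x/W,W)$ for loops at $x$, and the trivial summands from the remaining arrows; computing the right-hand side by Bott's theorem and the Cauchy formulas and matching it with $\kk[\rep_{\kk Q}(\bd)]/J$ gives $J=I(\cZ)$. (Alternatively one proves $J$ radical directly by a straightening-law / principal-radical-system argument.) This is a variety-of-complexes computation in the style of De Concini--Strickland; the one new phenomenon is that, for each loop $\gamma$ at $x$, the matrix $X_\gamma$ is simultaneously a block-column of $H_x$ and a block-row of $T_x$, so that the block $X_\gamma^2$ of $T_xH_x$ already cuts out a non-reduced scheme (already for one loop and $\bd(x)=2$), and adjoining the linear forms $\trace X_\gamma$ is exactly the remedy — this is the source of item (4).

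\emph{Step~2 (the general case, relatively over the Grassmannian)}. The ideal sheaf $\cI_{\tilde C}\subseteq\cO_\cE$ is pulled back, along the affine morphism $\cE\to\Gr(r,\bd(x))$, from the $P$-equivariant ideal sheaf cutting out $C$ inside $\cS$; pushing forward gives a graded ideal sheaf $\cJ\subseteq\Sym^\bullet\cE_0^\vee$, namely the one generated by the $GL(\bd(x))$-subrepresentation of $\Sym^\bullet\cE_0^\vee$ generated by $\cP$, and $\Gamma(\tilde C,\cO_{\tilde C})=\Gamma\big(\Gr(r,\bd(x)),\Sym^\bullet\cE_0^\vee/\cJ\big)$. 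The restriction map $\cO(\cE)\to\Gamma(\tilde C,\cO_{\tilde C})$ is surjective (it restricts functions from $\cZ$ to its closed subvariety $C^x$) and, by left-exactness of global sections, has kernel $\Gamma(\Gr(r,\bd(x)),\cJ)$; one then shows, via the requisite equivariant cohomology vanishing on $\Gr(r,\bd(x))$, that $\Gamma(\Gr(r,\bd(x)),\cJ)$ is generated as an ideal of $\cO(\cE)$ by the $GL(\bd(x))$-translates of $\cP$ — that is, it is the $GL(\bd(x))$-saturation of $\langle\cP\rangle$. Combining this with Step~1, which identifies $p^*$ with the quotient $\kk[\rep_{\kk Q}(\bd)]\to\kk[\rep_{\kk Q}(\bd)]/J=\cO(\cE)$, and with $\Gamma(\tilde C,\cO_{\tilde C})=\cO(C^x)$, we conclude that $I(C^x)$ is $J$ together with the $GL(\bd(x))$-saturation of $\langle\cP\rangle$ — that is, the generating set (1)--(5). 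That the choice of lift of $\cP$ from $\kk[\rep_{\kk Q^x}(\be)]$ to $\kk[\rep_{\kk Q}(\bd)]$ does not matter follows because $I(C^x)$ is $GL(\bd(x))$-stable.

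The main obstacle I expect is not the set-theoretic description but the ideal-theoretic one: showing that the listed polynomials \emph{generate}, and not merely cut out, the prime ideal. This amounts to the radicality of $J$ in Step~1 and to the commutation of $\Gamma(\Gr(r,\bd(x)),-)$ with forming the ideal sheaf generated by $\cP$ in Step~2; both reduce to Kempf-type cohomology vanishing over $\Gr(r,\bd(x))$ (or, in the straightening-law approach, to an explicit combinatorial straightening), and both are where the hypothesis $\charac\kk=0$ enters. A secondary technical point is the loop block: one must verify that the matrix $X_\gamma$, appearing simultaneously as a block-column of $H_x$ and a block-row of $T_x$, does not disrupt the primality argument of Step~1, and that exactly the traces $\trace X_\gamma$ — and no further loop equations — account for the failure of radicality.
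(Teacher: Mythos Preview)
Your proposal follows the same architecture as the paper: set up the collapsing map from the homogeneous bundle over $\Gr(r,\bd(x))$, handle the ambient case $C=\rep_{\kk Q^x}(\be)$ first, then pass to general $C$ by saturating. Two points of departure are worth flagging.

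Your Step~1 is right in spirit but understates what is needed. The paper (Proposition~\ref{prop:loopideal}) does \emph{not} compute $\cO(\cE)$ and match it against $S/J$. Instead it introduces an intermediate bundle $\cY_1$ between $\cZ$ and the trivial bundle, pushes forward the Koszul complex for $\cZ\subset\cY_1$, and shows via Bott vanishing that only two terms survive the spectral sequence: $H^0$ of the degree-$1$ piece contributes the entries of $T_xH_x$ and the traces $\trace X_\gamma$, while $H^{d-r}$ of the degree-$(d{-}r{+}1)$ piece contributes the minors of $T_x$. Nailing down the latter requires a second intermediate bundle $\cY_2$ and a map of Koszul complexes $L_\bullet\to i_{1*}K_\bullet$ whose induced map on the relevant cohomology is shown to be surjective by a further Bott argument. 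The loop blocks intertwine $\cR$ and $\cR^*$ in a way that makes a one-shot Cauchy--Bott matching or a straightening-law argument awkward; this two-layer filtration is the actual mechanism. Your diagnosis that exactly the traces $\trace X_\gamma$ are the needed correction is correct.

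There is also a small circularity in your setup: you invoke normality of $C^x$ via Theorem~\ref{thm:mainsing} to get $(p|_{\tilde C})_*\cO_{\tilde C}=\cO_{C^x}$, but that theorem only says $C^x$ is normal \emph{iff} $C$ is, and no normality is assumed. The identification $I(C^x)=\ker\big(\kk[\rep_{\kk Q}(\bd)]\to\Gamma(\tilde C,\cO_{\tilde C})\big)$ holds regardless, simply because $p$ is dominant onto $C^x$, so $p^*$ embeds $\kk[C^x]$ into $\Gamma(\tilde C,\cO_{\tilde C})$; the surjectivity of $\cO(\cE)\to\Gamma(\tilde C,\cO_{\tilde C})$ that you use in Step~2, however, does need justification beyond ``restriction to a closed subvariety'' when $C$ is not normal. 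The paper sidesteps this by deferring Step~2 entirely to the companion article \cite{collapsechar}, together with the $U^-$-invariants isomorphism \eqref{eq:invariant}; your outline of the required cohomology vanishing is a reasonable sketch of what that reference supplies.
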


We single out the case of radical square zero algebras for special attention.  In this case every vertex is a node, so splitting nodes results in a quiver without relations (i.e. a hereditary algebra), whose representation varieties are simply affine spaces.  This immediately yields the following result.

\begin{corollary}\label{cor:radsquarezero}
Let $A$ be a finite-dimensional $\kk$-algebra with $\rad^2 A= 0$. Then for any dimension vector $\bd$, each irreducible component $C \subset \rep_A (\bd)$ has rational singularities (and is thus also normal, and Cohen--Macaulay).
\end{corollary}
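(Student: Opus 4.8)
The plan is to reduce to the hereditary case by splitting every node, one at a time, and then read off the conclusion from the fact that representation varieties of path algebras are affine spaces. First, note that $\rad^2 A = 0$ says exactly that every path of length two in $Q$ lies in $I$, so \emph{every} vertex of $Q$ is a node. Working in the relative setting, pick a node $x$ of $A$ which is neither a sink nor a source and form $A^x$ as in the Introduction. From the local picture of the splitting, the new vertex $x_t$ is a source of $Q^x$ and $x_h$ is a sink, so neither is a node that still needs to be processed; meanwhile every vertex $y \neq x$ remains a node of $A^x$, since splitting $x$ creates no new length-two path through $y$. Hence $A^x$ has strictly fewer nodes that are neither sinks nor sources than $A$ does, and after finitely many splittings we reach an algebra $A' = \kk Q'$ every vertex of which is a sink or a source; then $Q'$ has no path of length two and $A'$ is hereditary, i.e. a path algebra with no relations, as already indicated in the Introduction.

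Next, fix an irreducible component $C \subseteq \rep_A(\bd)$. Since $GL(\bd)$ is connected, $C$ is automatically $GL(\bd)$-stable, so Proposition \ref{prop:bijection} associates to it an irreducible equivariant closed subvariety, and --- because that association is a bijection on equivariant irreducible closed subvarieties --- it sends irreducible components to irreducible components. Iterating through the chain of splittings, $C$ is matched with an irreducible component $C' \subseteq \rep_{A'}(\bd')$ for a suitable dimension vector $\bd'$. But $A' = \kk Q'$ is hereditary, so $\rep_{\kk Q'}(\bd') = \prod_{\za \in Q'_1} \Mat_{\bd'(t\za)\times \bd'(h\za)}(\kk)$ is an affine space; being irreducible it equals its unique irreducible component, whence $C' = \rep_{\kk Q'}(\bd')$. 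In particular $C'$ is smooth, so it has rational singularities.

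Now apply Theorem \ref{thm:mainsing} once at each of the finitely many splitting steps: along the chain $C, C^{x_1}, C^{x_1 x_2}, \dots, C'$ the property of having rational singularities passes back and forth, so since $C'$ has rational singularities, so does $C$. Finally, as $\charac \kk = 0$, having rational singularities implies being normal and Cohen--Macaulay, which gives the parenthetical assertions and completes the proof.

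The step I expect to be the crux is the claim that the correspondence of Proposition \ref{prop:bijection} matches irreducible components with irreducible components. This is genuinely needed: a proper equivariant closed subvariety of an affine space need not have rational singularities, so it is essential that $C'$ be the whole space $\rep_{\kk Q'}(\bd')$ rather than a smaller subvariety. It should come out of the homogeneous fiber bundle and collapsing map description underlying Proposition \ref{prop:bijection}, which controls dimension and hence preserves maximality among equivariant irreducible closed subvarieties. A minor companion point is the termination bookkeeping --- checking that $x_t$ and $x_h$ really are a source and a sink of $Q^x$, and that a vertex which has been split cannot regain ``splittable node'' status during a later splitting, so that the procedure indeed stops.
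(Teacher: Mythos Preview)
Your argument is correct and follows the same route as the paper: split all nodes to reach a hereditary algebra, observe that the resulting representation variety is an affine space, and transport rational singularities back along the chain via Theorem~\ref{thm:mainsing}. The paper states this as an immediate consequence right before Corollary~\ref{cor:radsquarezero} and fills in the details through Theorem~\ref{thm:comprad} and Corollary~\ref{cor:rankrat}.

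Regarding your self-identified crux: the paper records exactly this fact as Corollary~\ref{cor:irredcomp}, so you may simply cite it. Your heuristic (``controls dimension and hence preserves maximality'') is not quite the mechanism, though. The point is that the bijection of Proposition~\ref{prop:bijection} is inclusion-preserving, and on the $A^x$ side the constraint $r_{x_h}=r$ is automatic for anything containing $C^x$, since $\bd^x_r(x_h)=r$ bounds the $x_h$-rank from above while $C^x$ already attains it. Thus any equivariant irreducible closed subvariety $D \supsetneq C^x$ of $\rep_{A^x}(\bd^x_r)$ lies in the domain of the bijection and yields $GL(\bd(x))\cdot D \supsetneq C$, contradicting maximality of $C$. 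Note the asymmetry: the reverse direction (components of $\rep_{A^x}$ giving components of $\rep_A$) can fail, as Example~\ref{ex:irrcomp} and Lemma~\ref{lem:comp} illustrate; only the direction you need is guaranteed.
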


In fact, the claim about normality and the Cohen--Macaulay property holds in positive characteristic as well, see Remark \ref{rem:char}. To our knowledge, this is the first result limiting the singularities of all irreducible components of representation varieties for such a large class of non-hereditary, finite-dimensional algebras.
Similarly, Theorem \ref{thm:maindef} allows us to give an explicit set of defining equations for the irreducible components of representation varieties of radical square zero algebras (see Corollary \ref{cor:radef}).

\subsection{Relation to existing literature}\label{sec:lit}
When considering all algebras $A=\kk Q/I$, or even restricting to finite-dimensional ones, there can be arbitrarily many irreducible components of the representation varieties $\rep_A(\bd)$, and their singularities can be smoothly equivalent to any singularity that appears in a finite type scheme over $\ZZ$.  Thus, there is no reasonable expectation for results on the geometry of $\rep_A(\bd)$ for arbitrary $A$ and $\bd$.  One instead restricts to specific classes of algebras, and even then there are very few where singularity properties or prime ideals of irreducible components (or orbit closures) can be described.  Below we survey some literature on these problems and how our work relates.

\textit{Comparison with stratification methods:} 
Stratification and partition methods have been used quite successfully in works such as
\cite{HZ07,BHZT09,GHZ18} to study representation varieties, specifically to parametrize irreducible components and describe generic properties of representations in them.  The map we call $r_x(M)$ in Section \ref{sec:nodebundles} counts the number of simple summands of $\rad M \cap \soc M$ which are supported at $x$ when $x$ is a node, and has been used in the works cited above to stratify representation varieties.
Example \ref{ex:stratfail} illustrates a limitation of stratification methods in studying singularities of $\rep_A(\bd)$ and its closed subvarieties, motivating the additional techniques in the present work.

\smallskip

\textit{Singularities and defining equations:} The authoritative source on singularities in representation varieties
is the survey of Zwara \cite{Zwarasurvey}.
Some additional contributions to this topic can be found in more recent papers such as  \cite{Bobinski12,RZ13,LZ14,Sutar15,lor,lorwey}.  To the best of our knowledge, our results give the first classes of algebras (other than the trivial hereditary case) where every irreducible component of every representation variety is known to have rational singularities.

Particular cases of radical square zero algebras have been studied from the point of view of singularities and defining equations. In the articles \cite{Kempfcomp,DecoStrick, Strick1,Strick2, MT1,MT2} the authors study radical square zero algebras of quivers of type $\bbA, \tilde{\bbA}_1, \tilde{\bbA}_2$, where they prove that the irreducible components of the respective varieties of complexes and circular complexes (more generally, the respective varieties $C_\br$ as defined in Theorem \ref{thm:comprad}) have rational singularities, and provide generators of their defining ideals. In this paper we generalize these results to arbitrary quivers (see also Remark \ref{rem:char}).

\smallskip

\textit{Irreducible components:}
Irreducible components of representation varieties have been studied for many algebras, both in works mentioned above and also for example in \cite{BS01,GS03,Schroer04,RRS11,KW14,BS19,BS19b,Bob19}; see also \cite{GHZ12} for a survey.  
All our results on parametrization of irreducible components appeared or can be derived from earlier results in the literature, but in other language; thus we include our own treatment for completeness.  Detailed citations are given in the body of the text.

\smallskip

\textit{Moduli spaces:}
Moduli spaces were introduced into the representation theory of finite-dimensional algebras by A.D. King \cite{King94}, motivated by earlier work on moduli spaces of vector bundles such as \cite{GR87,Bondal}.  See also \cite{Hoskins} for a more recent review article.
One motivation to study normality of irreducible components of representation varieties comes from the main theorem of \cite{CKdecomp} which gives decompositions of moduli spaces of semistable representations.  
These decompositions were first proven for algebras of the form $A=\kk Q$ with $Q$ acyclic in \cite{DW11} (see also \cite{CB02}), then extended to certain classes of non-hereditary algebras in works such as \cite{Chindris10,CC15,CCKW17}.
 We discuss such applications and related semistability results in Section \ref{sec:moduli}.

\subsection*{Acknowledgements}
The authors thank Birge Huisgen-Zimmermann for discussions on radical square zero algebras and Claus Ringel for encouragement to consider the relative situation of node splitting in general.  We also thank Paul Muhly for discussion on the history of graphs, quivers, and radical square zero algebras.  Finally, we are especially grateful to the final referee for very careful readings that improved the quality of the article.
This work was supported by a grant from the Simons Foundation (636534, RK).


\section{Background}
\subsection{Quivers}
We denote a \emph{quiver} by $Q=(Q_0, Q_1, t, h)$, where $Q_0$ is the \emph{vertex} set, $Q_1$ the \emph{arrow} set, and $t, h\colon Q_1 \to Q_0$ give the \emph{tail} and \emph{head} of an arrow $t\za \xrightarrow{\za} h\za$.
A \emph{representation} $M$ of $Q$ is a collection of (finite-dimensional) $\kk$-vector spaces $(M_z)_{z \in Q_0}$ assigned to the vertices of $Q$, along with a collection of $\kk$-linear maps $(M_\alpha \colon M_{t\alpha} \to M_{h\alpha})_{\alpha \in Q_1}$ assigned to the arrows.  
We recall the some key facts here, but for a more detailed recollection we refer the interested reader to standard references such as \cite{assemetal,Schiffler:2014aa,DWbook}.

A quiver $Q$ determines a \emph{path algebra} $\kk Q$.
The category of (left) modules over the algebra $\kk Q/I$ is equivalent to the category of representations of the \emph{quiver with relations} $(Q,R)$, where $R$ is usually taken to be a minimal set of generators of $I$.
These equivalences can be used freely without significantly affecting the geometry, as made precise in \cite{Bongartz91}.

Given a nonnegative integer $n$, we write $Q_{\geq n}$ for the set of all paths of $Q$ of length greater than or equal to $n$, and $\kk Q_{\geq n} \subseteq \kk Q$ for the linear space of this set.
An ideal is \emph{admissible} if $\kk Q_{\geq N} \subset I \subset \kk Q_{\geq 2}$ for some $N \geq 2$.  
Given a finite-dimensional $\kk$-algebra $A$, it is Morita equivalent to a quotient of a path algebra $\kk Q/I$. If $I$ is taken to be admissible (which is always possible), then $Q$ is uniquely determined, 
and the Jacobson radical $\rad(\kk Q/I)$  is spanned by $Q_{\geq 1}$ modulo $I$. We always assume that $I \subset \kk Q_{\geq 2}$ throughout the paper, and that $I$ is admissible in Section \ref{sec:moduli} for our results on moduli spaces.

\subsection{Node splitting}\label{sec:nodesplit}
The operation of node splitting for Artin algebras was introduced by Mart\'inez-Villa \cite{MV80}, and further publicized in \cite[X.2]{ARS}.  Here we recall this notion in language translated to quotients of quiver path algebras.
We say that $x\in Q_0$ is a \emph{node} of an algebra $A=\kk Q/I$ if $\za \zb \in I$ for all pairs $\za, \zb \in Q_1$ such that $h\za = x$ and $t\zb = x$. 
In other words, any path having $x$ as an intermediate vertex is 0 in $A$.
Note that we allow sinks and sources to be ``nodes'' in this paper, contrary to the usual definition which omits these.
 
Given a quiver $Q$ and $x\in Q_0$, we can consider the quiver $Q^x$ with vertex set
\[
(Q^x)_0 = (Q_0 \backslash \{x\}) \cup \{x_t, x_h\}
\]
and arrow set $(Q^x)_1 = Q_1$.  Tail and head functions $t, h\colon (Q^x)_1 \to (Q^x)_0$ are the same as in $Q$ except that $x$ is replaced with $x_t$ in the codomain of $t$, and $x$ is replaced with $x_h$ in the codomain of $h$.
The operation locally around $x$ is illustrated below.
\[
Q=
\vcenter{\hbox{\begin{tikzpicture}[point/.style={shape=circle,fill=black,scale=.5pt,outer sep=3pt},>=latex]
   \node[point] (y1) at (-2,1) {};
  \node[point] (y2) at (-1,1) {};
  \node[outer sep=-2pt] at (0,1) {${\cdots}$};
  \node[point] (yr) at (1,1) {};
   \node[label={[label distance=0.5cm]180:$\cdots$}] (x) at (-1,0) {${x}$};
   \node[point] (z1) at (-2,-1) {};
  \node[point] (z2) at (-1,-1) {};
  \node[outer sep=-2pt] at (0,-1) {${\cdots}$};
  \node[point] (zs) at (1,-1) {};
\draw[->] (x) to [out=155,in=205,looseness=10] (x);
\draw[->] (x) to [out=150,in=210,looseness=20] (x);
  \path[->]
	(y1) edge  (x)
	(y2) edge  (x)
	(yr) edge  (x)
  	(x) edge  (z1)
	(x) edge  (z2)
	(x) edge  (zs);
   \end{tikzpicture}}}
\qquad\rightsquigarrow \qquad
Q^x=\quad
\vcenter{\hbox{\begin{tikzpicture}[point/.style={shape=circle,fill=black,scale=.5pt,outer sep=3pt},>=latex]
   \node[point] (y1) at (-1,1) {};
  \node[point] (y2) at (0,1) {};
  \node[outer sep=-2pt] at (1,1) {${\cdots}$};
  \node[point] (yr) at (2,1) {};
   \node[outer sep=-2pt] (xt) at (-1,0) {${x_t}$};
   \node[outer sep=-2pt] (xh) at (1,0) {${x_h}$};
   \node[point] (z1) at (-2,-1) {};
  \node[point] (z2) at (-1,-1) {};
  \node[outer sep=-2pt] at (0,-1) {${\cdots}$};
  \node[point] (zs) at (1,-1) {};
   \node at (0,.1) {$\vdots$};
  \path[->]
	(y1) edge  (xh)
	(y2) edge  (xh)
	(yr) edge  (xh)
  	(xt) edge  (z1)
	(xt) edge  (z2)
	(xt) edge  (zs);
\draw[->,bend left=20] (xt.20) to (xh.160);
\draw[->,bend right=20] (xt.-20) to (xh.200);
   \end{tikzpicture}}}
\]
Notice that $Q^x_{\geq 1}$ can be naturally identified with a subset of $Q_{\geq 1}$, inducing an inclusion of vector spaces $\kk Q^x_{\geq 1} \subset \kk Q_{\geq 1}$. 

Let $x$ be a node of an algebra $\kk Q/I$.   
The algebra $A^x=\kk Q^x/I^x$ is defined with $Q^x$ as above and $I^x = I \cap \kk Q^x_{\geq 1}$.
It is easily observed that if $x$ is a node of $A=\kk Q/I$, then $A^x$ has exactly one fewer node than $A$ (not counting sources and sinks).
Furthermore, it is not difficult to see that the category of left $A$-modules is equivalent to the full subcategory of left $A^x$-modules which have no direct summand isomorphic to the simple at $x_h$.

An algebra $A=\kk Q/I$ such that every vertex of $Q$ is a node is a \emph{radical square zero algebra}, which is equivalent to $I=\kk Q_{\geq 2}$, and equivalent to $\rad^2 A = 0$.

\subsection{Representation varieties}\label{sec:repvariety}
Given a quiver $Q$ and \emph{dimension vector} $\bd \colon Q_0 \to \mathbb{Z}_{\geq 0}$, we study the \emph{representation variety}
\[
\rep_Q(\bd) = \prod_{\za \in Q_1} \Mat(\bd(h\za), \bd(t\za)),
\]
where $\Mat(m,n)$ denotes the variety of matrices with $m$ rows, $n$ columns, and entries in $\kk$.
We consider the left action of the \emph{base change group}
\[
GL(\bd) = \prod_{z \in Q_0} GL(\bd(z))
\]
on $\rep_Q(\bd)$ given by
\[
g\cdot M = (g_{h\za}M_\alpha g_{t\za}^{-1})_{\za\in Q_1},
\]
where $g = (g_z)_{z \in Q_0} \in GL(\bd)$ and $M = (M_\za)_{\za \in Q_1} \in \rep_Q(\bd)$.

Now consider an algebra $A=\kk Q/I$ with corresponding quiver with relations $(Q,R)$. Then the representation variety $\rep_A(\bd)$ is the closed $GL(\bd)$-stable subvariety of $\rep_Q(\bd)$ defined by
\[\rep_A(\bd) = \{ M \in \rep_Q(\bd) \, \mid \, M(r)=0, \, \mbox{ for all } r\in R\}.\]
Thus, the points of $\rep_A(\bd)$ are representations of $(Q,R)$ of dimension vector $\bd$.  
Simply from the definitions, $GL(\bd)$-orbits in $\rep_A(\bd)$ are in bijection with isomorphism classes of representations of $A$ of dimension vector $\bd$. For a representation $M$ of $A$ of dimension vector $\bd$, we denote by $O_M$ the orbit in $\rep_A(\bd)$ corresponding to the isomorphism class of $M$, and by $\ol{O}_M$ the closure of this orbit.

\subsection{Rational singularities}

We say that a morphism between algebraic varieties $f:Z\to X$ is a \emph{resolution of singularities} if both $Z$ is smooth, and $f$ is proper and birational.

When $\charac\kk =0$, we say that an algebraic variety $X$ has \emph{rational singularities}, if for some (hence, any) resolution of singularities $Z\to X$, we have
\begin{itemize}
\item[(a)] $X$ is normal, that is, the natural map $\cO_X \to f_* \cO_Z$ is an isomorphism, and
\item[(b)] $\bR^i f_* \cO_Z = 0$, for $i>0$.
\end{itemize}
It is known that if $X$ has rational singularities, then $X$ is a Cohen--Macaulay variety. For more details, we refer the reader to \cite[Section 1.2]{weymanbook}.

\subsection{Homogeneous fiber bundles}\label{sec:bundles}
Let $G$ be an algebraic group and $H\leq G$ a closed algebraic subgroup, and suppose we have an action of $H$ on a quasi-projective algebraic variety $S$.
We write $G \times_H S$ for the quotient of $G \times S$ by the free left action of $H$ given by $h\cdot (g,s) = (gh^{-1}, h\cdot s)$, called an \emph{induced space} or \emph{homogeneous fiber bundle}.
We consider this quotient as a $G$-variety by the action $g\cdot (g',s) = (gg',s)$. 
Furthermore, we embed $S \into G \times_H S$ via the map $s \mapsto (1,s)$.
The following lemma can be proven directly from definitions; see for example \cite[\S2.1]{Timashev} for further discussion.

\begin{lemma}\label{lem:fiberbundle}
The maps below are mutually inverse, inclusion preserving bijections.
\[
\begin{split}
\left\{\begin{tabular}{c} $G$-stable subvarieties\\ of $G \times_H S$\end{tabular} \right\}
& \leftrightarrow
\left\{\begin{tabular}{c}  $H$-stable subvarieties\\ of $S$ \end{tabular} \right\}
\\
Y \qquad \qquad &\mapsto \qquad \qquad Y \cap S\\
G\times_H Z \qquad \qquad &\mapsfrom \qquad \qquad Z
\end{split}
\]
In particular, they give a bijection on orbits, and an isomorphism of orbit closure posets.
\end{lemma}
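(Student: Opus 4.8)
The plan is to verify directly that the two assignments $Y \mapsto Y \cap S$ and $Z \mapsto G \times_H Z$ are well-defined, mutually inverse, and inclusion-preserving; the statements about orbits and orbit closures then follow formally. First I would record the basic structural facts about $\pi \colon G \times S \to G \times_H S$: this is a principal $H$-bundle (locally trivial in the \'etale or even Zariski topology since $H$ is a closed subgroup of $G$), hence faithfully flat, so that $G$-stable closed subvarieties of $G\times_H S$ correspond exactly to $G\times H$-stable closed subvarieties of $G \times S$. A $G\times H$-stable closed subset of $G \times S$ has the form $G \times Z$ for a unique $H$-stable closed $Z \subseteq S$: indeed, if $W \subseteq G\times S$ is stable under the left $G$-action $g'\cdot(g,s)=(g'g,s)$, then $W$ is determined by its fiber $Z := W \cap (\{1\}\times S)$ over $1 \in G$, and stability under the $H$-action $h\cdot(g,s)=(gh^{-1},h\cdot s)$ forces $Z$ to be $H$-stable. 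Translating through $\pi$, this says precisely that every $G$-stable closed subvariety of $G\times_H S$ equals $G\times_H Z$ for a unique $H$-stable closed $Z\subseteq S$, and under the embedding $S \hookrightarrow G\times_H S$, $s\mapsto(1,s)$, this $Z$ is recovered as $(G\times_H Z)\cap S$.

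Next I would check the two round-trips explicitly. For $Z \mapsto G\times_H Z \mapsto (G\times_H Z)\cap S$: the preimage of $(G\times_H Z)\cap S$ in $\{1\}\times S$ under $\pi$ is exactly $\{1\}\times Z$ by the identification above, giving $(G\times_H Z)\cap S = Z$. For $Y \mapsto Y\cap S \mapsto G\times_H(Y\cap S)$: writing $Y = G\times_H Z$ for the unique $H$-stable $Z$ provided by the previous paragraph, we have $Y\cap S = Z$ and hence $G\times_H(Y\cap S) = G\times_H Z = Y$. Inclusion-preservation in both directions is immediate: intersection with $S$ clearly preserves inclusions, and $Z_1 \subseteq Z_2$ gives $G\times_H Z_1 \subseteq G\times_H Z_2$ since the induced-space construction is functorial in the $H$-variety. (One should also note that these bijections restrict to closed subvarieties and to irreducible closed subvarieties, since $G\times_H Z$ is irreducible whenever $Z$ is, $G$ being irreducible and the bundle having irreducible fibers.)

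Finally, the consequences. A $G$-orbit in $G\times_H S$ is in particular a $G$-stable subset, and its closure is $G$-stable (closure of a $G$-stable set is $G$-stable since the action is by homeomorphisms), so the bijection applies to orbit closures. That the bijection sends $G$-orbits to $H$-orbits: given $s\in S$, the $G$-orbit of $(1,s)$ in $G\times_H S$ meets $S=\{1\}\times S$ in exactly the set of $(1,s')$ with $(1,s')\sim_G(1,s)$, and $(1,s')=g\cdot(1,s)$ in $G\times_H S$ means $(g^{-1},s')$ and $(1,s)$ differ by the $H$-action, i.e. $g^{-1}=h^{-1}$ and $s'=h\cdot s$ for some $h\in H$, so that $G\cdot(1,s)\cap S = H\cdot s$; conversely every $H$-orbit arises this way. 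Passing to closures and using that the bijection preserves inclusions yields the claimed isomorphism of orbit closure posets.

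The main obstacle is genuinely just the first paragraph: one needs the quotient $G\times_H S$ to exist as a variety and $\pi$ to be a locally trivial (hence faithfully flat) $H$-bundle, so that ``$G$-stable subvarieties downstairs'' match ``$G\times H$-stable subvarieties upstairs'' with no pathology. This is standard when $H$ is a closed subgroup of a linear algebraic group $G$ acting on a quasi-projective $S$ with $S$ covered by $H$-stable quasi-affine (or quasi-projective) opens — exactly the setting cited from \cite[\S2.1]{Timashev} — so once that foundational input is invoked, everything else is bookkeeping.
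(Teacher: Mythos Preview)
The paper does not actually give a proof of this lemma; it simply remarks that it ``can be proven directly from definitions'' and points to \cite[\S2.1]{Timashev} for further discussion. Your argument is a correct and complete direct verification along exactly those lines, so there is nothing to compare against and nothing to correct.
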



\section{Node splitting and bundles}\label{sec:nodebundles}
Consider an algebra $A=\kk Q/I$, and $\bd$ a dimension vector of $Q$. 

\subsection{Node splitting on strata}
Throughout we use the following notation.

\begin{definition}\label{def:xrank}
For $x\in Q_0$ and $M\in\rep_A(\bd)$, we define linear maps $h_x(M)$ and $t_x(M)$ by
\begin{equation}\label{eq:hxtx}
h_x(M)=\bigoplus_{h\za =x} M_{\za} \colon  \, \bigoplus_{h\za =x} M_{t\za} \to M_x, \, \mbox{ and } \, t_x(M)=\bigoplus_{t\za =x} M_{\za}  \, \colon M_{x} \to \bigoplus_{t\za =x} M_{h\za}.
\end{equation}
 Given a non-empty subset $S\subset \rep_A(\bd)$, we define the $x$-\textit{rank} of $S$ to be the number
\[r_x(S) := \max_{M \in S} \left\{\rank{h_x(M)} \right\}.\]
Moreover, we denote by $S^\circ_x = \{M \in S \,\mid\, r_x(M)=r_x(S)\}$, where $r_x(M):=r_x(\{M\})$.
\end{definition}


Now assume that $x\in Q_0$ is a node of $A$. Let $A^x= \kk Q^x/I^x$ be the algebra obtained by splitting the node $x$, as explained in Section \ref{sec:nodesplit}. 
Fix an integer $r$ with $0\leq r \leq \bd(x)$. We denote by $\bd^x_r$ the dimension vector of $Q^x$ obtained by putting $\bd^x(x_h) = r$, $\bd^x(x_t) = \bd(x) - r$, and at the rest of the vertices $\bd^x$ coincides with $\bd$. We can view $GL(\bd^x_r)$ naturally as a subgroup of $GL(\bd)$ via diagonal embedding at $x$.
We realize the variety $\rep_{A^x}(\bd^x_r)$ as a $GL(\bd^x_r)$-stable closed subvariety of $\rep_A(\bd)$ by an embedding $i\colon \rep_{A^x}(\bd^x_r) \into \rep_A(\bd)$.  Namely, given $M=(M_\za)_{\za \in Q_1}$ in the domain, we take $i(M) = (N_\za)_{\za \in Q_1}$ where
\begin{equation}\label{eq:repAxembed}
N_\za = 
\begin{cases}
M_\za & t\za \neq x \neq h\za\\
\left[\begin{smallmatrix}M_\za \\ 0\end{smallmatrix}\right] & h\za =x \mbox{ and } t\za \neq x,\\
\left[\begin{smallmatrix}0 & M_\za \end{smallmatrix}\right] & t\za =x \mbox{ and } h\za \neq x,\\
\left[\begin{smallmatrix}0 & M_\za \\ 0 & 0 \end{smallmatrix}\right] & t\za =x \mbox{ and } h\za = x.\\
\end{cases}
\end{equation}
In the remainder of the paper we implicitly use this specific embedding $\rep_{A^x}(\bd^x_r) \subset \rep_A(\bd)$ without mentioning the map $i$.

If $C$ is a $GL(\bd^x_r)$-stable irreducible closed subvariety of $\rep_{A^x}(\bd^x_r)$ (hence of $\rep_A(\bd)$), we take its $GL(\bd(x))$-saturation to obtain the $GL(\bd)$-stable subset $GL(\bd(x)) \cdot C$ of $\rep_A(\bd)$. Note that we have $r_x(GL(\bd(x)) \cdot C) = r_{x_h}(C) \leq r$.

Retaining the notation above, furthermore let $P_r\leq GL(\bd(x))$ be the parabolic subgroup of block upper triangular matrices with two blocks along the diagonal, of size $r$ in the upper left and $\bd(x)-r$ in the lower right.
From \eqref{eq:repAxembed} we see that $\rep_{A^x}(\bd^x_r)$ is not only $GL(\bd^x_r)$-stable, but also a $P_r$-stable subvariety of $\rep_A(\bd)$, since the unipotent radical of $P_r$ acts trivially on $\rep_{A^x}(\bd^x_r)$.

\begin{remark}
In our setup above, let $P^x_r(\bd) \leq GL(\bd)$ be the subgroup where the factor $GL(\bd(x))$ is replaced by $P_r$, so we have also that $P^x_r(\bd) \geq GL(\bd_r^x)$.
We observe here that 
$GL(\bd(x))\times_{P_r} C= GL(\bd)\times_{P^x_r(\bd)} C$ 
for any $GL(\bd^x_r)$-stable irreducible closed subvariety $C\subset \rep_{A^x}(\bd^x_r)$, and the saturations $GL(\bd(x)) \cdot C$ and $GL(\bd) \cdot C$ are the same.
We use this observation (particularly when applying Lemma \ref{lem:fiberbundle}) throughout the article without always explicitly mentioning it.
\end{remark}

\begin{proposition}\label{prop:xrbundle}
Let $0 \leq r \leq \bd(x)$ and consider the locally closed subvariety of $\rep_A(\bd)$ consisting of all points of $x$-rank exactly $r$:
\[\rep^r_A(\bd) := \setst{N \in \rep_A(\bd)}{r_x(N)=r}.\]
Then $\rep^r_A(\bd)$ is non-empty if and only if $r_x( \rep_{A^x}(\bd^x_r)) =r$, in which case the following map is an isomorphism of varieties:
\[\Psi\colon \, GL(\bd(x))\times_{P_r} \rep_{A^x}(\bd^x_r)^\circ_x \, \longrightarrow \, \rep_A^r(\bd) \, , \quad (g,M) \mapsto g\cdot M.\]
\end{proposition}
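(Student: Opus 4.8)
The plan is to exhibit an explicit inverse to $\Psi$ and check it is a morphism, using the structure of the embedding \eqref{eq:repAxembed} together with Lemma \ref{lem:fiberbundle}. First I would observe that $\Psi$ is well-defined: the subvariety $\rep_{A^x}(\bd^x_r)^\circ$ consists of representations $M$ with $\rank h_{x_h}(M) = r$, i.e. the block $h_x$-map is surjective onto the $r$-dimensional space $M_{x_h}$; after applying $i$ and acting by $GL(\bd(x))$ one still has $x$-rank exactly $r$, so the image lies in $\rep_A^r(\bd)$. Well-definedness on the fiber bundle quotient requires that $P_r$ stabilizes $\rep_{A^x}(\bd^x_r)^\circ$, which follows from the remark preceding the proposition (the unipotent radical of $P_r$ acts trivially on $\rep_{A^x}(\bd^x_r)$, and its Levi $GL(r)\times GL(\bd(x)-r)$ is exactly $GL(\bd^x_r)$ restricted to the $x$-part, which preserves the open condition $r_x = r$).

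Next I would construct the inverse. Given $N \in \rep_A^r(\bd)$, the image of $h_x(N)\colon \bigoplus_{h\za = x} N_{t\za} \to N_x$ is an $r$-dimensional subspace $U \subseteq N_x = \kk^{\bd(x)}$; since $x$ is a node, $t_x(N) \circ h_x(N) = 0$, so $U \subseteq \ker t_x(N)$, and moreover any loop $\gamma$ at $x$ satisfies $N_\gamma(N_x) \subseteq U$ and $U \subseteq \ker N_\gamma$ (again because paths through $x$ vanish). Choose $g \in GL(\bd(x))$ carrying $U$ to the standard coordinate subspace $\kk^r \subseteq \kk^{\bd(x)}$ spanned by the first $r$ vectors; then $g\cdot N$ has all arrows into/out of $x$ (and loops at $x$) in exactly the block form of \eqref{eq:repAxembed} for the dimension vector $\bd^x_r$, hence $g \cdot N = i(M)$ for a unique $M \in \rep_{A^x}(\bd^x_r)$, and the surjectivity of $h_x$ onto $U$ translates to $M \in \rep_{A^x}(\bd^x_r)^\circ$. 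The pair $(g^{-1}, M) \in GL(\bd(x)) \times_{P_r} \rep_{A^x}(\bd^x_r)^\circ$ is independent of the choice of $g$: two valid choices differ by an element of the stabilizer of $\kk^r$, which is precisely $P_r$, and this is exactly the ambiguity quotiented out in the fiber bundle. This gives a set-theoretic inverse $\Phi$ to $\Psi$.

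It remains to check both $\Psi$ and $\Phi$ are morphisms. That $\Psi$ is a morphism is immediate since it is induced by the action map $GL(\bd(x)) \times \rep_{A^x}(\bd^x_r)^\circ \to \rep_A^r(\bd)$, which is a morphism, and the quotient $GL(\bd(x)) \times \rep_{A^x}(\bd^x_r)^\circ \to GL(\bd(x)) \times_{P_r} \rep_{A^x}(\bd^x_r)^\circ$ is a (locally trivial, since $P_r$ is parabolic in $GL(\bd(x))$) geometric quotient. For $\Phi$, the standard device is to produce everything locally: on the open subset of $\rep_A^r(\bd)$ where a fixed $r\times r$ submatrix of $h_x(N)$ is invertible, one can write down a section of $GL(\bd(x))/P_r$ — i.e. a morphism choosing $g$ — by explicit row operations with denominators the chosen minor, and then $M$ is recovered from $g\cdot N$ by reading off matrix blocks, which is algebraic; these local sections glue to a morphism since they all compute the same point of the fiber bundle. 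The main obstacle, and the only genuinely technical point, is this last verification that $\Phi$ is a morphism rather than merely a bijection — equivalently, exhibiting algebraic local sections of the Grassmannian-type choice of $g$ and checking they patch; everything else is bookkeeping with the block decomposition \eqref{eq:repAxembed} and the node relation $t_x(N)h_x(N) = 0$. Alternatively one can invoke the general principle (as in \cite[\S2.1]{Timashev}) that a bijective $G$-equivariant morphism of smooth $G$-varieties which is an isomorphism on the level of a local slice is an isomorphism, reducing to checking $\Psi$ is injective with everywhere-surjective differential; but the explicit-inverse approach is cleaner here since $\rep_{A^x}(\bd^x_r)^\circ$ may be singular.
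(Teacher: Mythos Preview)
Your proof is correct and follows essentially the same approach as the paper: construct the inverse by choosing $g$ sending $\im h_x(N)$ to the standard $\kk^r$, observe the node relations force the block form \eqref{eq:repAxembed}, and note that the $P_r$-ambiguity in the choice of $g$ is exactly what the fiber bundle quotients out. If anything you are more careful than the paper, which simply asserts that $N \mapsto (g, g^{-1}\cdot N)$ is an ``inverse morphism'' without your explicit discussion of local sections over the minor-nonvanishing loci to verify algebraicity.
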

\begin{proof}

The map is well-defined since $\Psi(gp^{-1}, pM) = \Psi(g, M)$. To construct the inverse morphism, take any $N \in \rep_A^r(\bd)$, so we know $ \im h_x(N)$ is an $r$-dimensional subspace of $\kk^{\bd(x)}$. Then we can find $g \in GL(\bd(x))$ such that 
\[
\im h_x(g^{-1}\cdot N) =g^{-1}_x \left(\im h_x(N)\right)=  \kk^r \subset \kk^{\bd(x)},
\]
the subspace spanned by the first $r$ standard basis vectors.  Since $x$ is a node in $A$, this means $g^{-1}\cdot N \in \rep_{A^x}(\bd^x_r)$ via the identification of \eqref{eq:repAxembed}.  This $g$ is not unique, but any $g_0 \in GL(\bd(x))$ with the same property satisfies that $g_0^{-1}g$ stabilizes $\kk^r$, 
which is to say $g_0^{-1}g \in P_r$. Thus $(g, g^{-1}\cdot N)$ and $(g(g_0^{-1}g)^{-1}, (g_0^{-1}g)g^{-1}\cdot N) = (g_0, g_0^{-1}\cdot N)$ represent the same point in the quotient variety 
$GL(\bd(x))\times_{P_r} \rep_{A^x}(\bd^x_r)^\circ_x$, so $N \mapsto (g, g^{-1} \cdot N)$ as above gives a well-defined inverse morphism to $\Psi$.
\end{proof}

The following well-known result of Mart\'inez-Villa \cite{MV80} follows quickly from Proposition \ref{prop:xrbundle}.
We include a short proof to illustrate the use of geometric methods in representation theory of algebras.

\begin{corollary}\label{cor:indec}
There is a bijection between the set of isomorphism classes of indecomposable representations of $A$, and the set of isomorphism classes of indecomposable representations of $A^x$ which contain no simple direct summand supported at $x_h$.
\end{corollary}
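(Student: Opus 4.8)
The plan is to deduce the bijection on indecomposables from the isomorphism $\Psi$ of Proposition~\ref{prop:xrbundle} together with the orbit correspondence coming from Lemma~\ref{lem:fiberbundle}, using that direct sum decompositions are detected by orbit closures. First I would recall the easy algebraic fact already noted in Section~\ref{sec:nodesplit}: the category of left $A$-modules is equivalent (via the obvious functor identifying $M_x$ with $M_{x_t}=M_{x_h}$ glued along the zero path relations at the node) to the full subcategory $\cC$ of left $A^x$-modules having no direct summand isomorphic to the simple $S_{x_h}$. Since equivalences of categories preserve indecomposability, it suffices to show that an $A^x$-module $N$ lying in $\cC$ is indecomposable as an $A^x$-module if and only if its image in $\module A$ is indecomposable --- but this is automatic from the equivalence, so strictly speaking the corollary is immediate. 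The point of including a geometric proof is to illustrate how Proposition~\ref{prop:xrbundle} recovers it, so I would instead argue as follows.

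Fix a dimension vector $\bd$ of $A$. An $A$-module $M$ of dimension vector $\bd$ with $r_x(M)=r$ corresponds, under the identification~\eqref{eq:repAxembed}, to a point of $\rep^r_A(\bd)$, and by Proposition~\ref{prop:xrbundle} the orbit $O_M \subseteq \rep^r_A(\bd)$ corresponds under $\Psi$ to $GL(\bd(x))\times_{P_r}(O_M\cap \rep_{A^x}(\bd^x_r)^\circ)$; by Lemma~\ref{lem:fiberbundle} this matches $O_M$ with a single $GL(\bd^x_r)$-orbit $O_{M'}$ in $\rep_{A^x}(\bd^x_r)^\circ$, where $M'\in\rep_{A^x}(\bd^x_r)$ is the $A^x$-module corresponding to $M$. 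Ranging over all $r$ and all $\bd$, every isomorphism class of $A$-module corresponds to exactly one isomorphism class of $A^x$-module, and $M'$ has no summand $S_{x_h}$ precisely because $\bd^x(x_h)=r=r_x(M)=\rank h_x(M')= r_{x_h}(M')$ forces $h_{x_h}(M')$ to be surjective, i.e. $S_{x_h}$ is not a summand. Conversely any $A^x$-module with $\bd^x(x_h)=r_{x_h}$ arises this way.

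It remains to see that this bijection of isomorphism classes restricts to a bijection of \emph{indecomposable} classes. For this I would use the standard fact that $M$ decomposes nontrivially as an $A$-module if and only if its $GL(\bd)$-orbit closure $\ol{O}_M$ meets the orbit of a decomposable module of the same dimension vector --- more precisely, $M\cong M_1\oplus M_2$ nontrivially iff $M$ lies in the image of the direct-sum map $\rep_A(\bd')\times\rep_A(\bd'')\to\rep_A(\bd)$ for some nontrivial splitting $\bd=\bd'+\bd''$, and this image is $GL(\bd)$-stable. Since $\Psi$ and Lemma~\ref{lem:fiberbundle} respect the poset of orbit closures and $GL$-stable subvarieties, and since the direct-sum operation on $A$-modules corresponds under the category equivalence to the direct-sum operation on the corresponding $A^x$-modules in $\cC$ (note $S_{x_h}\notin\cC$ can never appear as a summand on either side), the subset of $\rep_A(\bd)$ of decomposable modules corresponds to the analogous subset of $\bigsqcup_r \rep_{A^x}(\bd^x_r)$. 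Hence indecomposability is preserved in both directions.

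The main obstacle is bookkeeping rather than depth: one must check carefully that the value $r=\bd^x(x_h)$ pinned down by the condition $M'\in\rep_{A^x}(\bd^x_r)^\circ$ (equivalently, no $S_{x_h}$-summand) is exactly what makes the correspondence well-defined across varying $r$, and that the category equivalence of Section~\ref{sec:nodesplit} is compatible with direct sums and with the geometric identification~\eqref{eq:repAxembed}. Once that compatibility is in hand, the statement follows formally from Proposition~\ref{prop:xrbundle} and Lemma~\ref{lem:fiberbundle}.
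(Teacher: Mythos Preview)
Your proof is essentially correct, but it differs from the paper's in the key step, and there is a misstatement worth flagging.

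First the misstatement: you write that $M$ decomposes nontrivially iff $\ol{O}_M$ meets the orbit of a decomposable module. This is false in the nontrivial direction --- orbit closures of indecomposables routinely contain decomposable degenerations (already in type $\bbA_2$, the indecomposable $\kk\xrightarrow{\mathrm{id}}\kk$ degenerates to the decomposable $\kk\xrightarrow{0}\kk$). Your immediate ``more precisely'' gives a different, correct criterion ($M$ lies in the image of a direct-sum map), and that is what you actually use, so the argument survives; but the orbit-closure phrasing should be deleted, not presented as a ``standard fact''.

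Now the comparison. Your argument for preservation of indecomposability ultimately rests on the category equivalence of Section~\ref{sec:nodesplit} (you explicitly invoke it to match direct sums), and the orbit-closure/poset machinery from Lemma~\ref{lem:fiberbundle} is not actually doing any work in that step. Since you already observed that the category equivalence makes the corollary immediate, your ``geometric'' proof collapses back to that observation. The paper instead gives a genuinely geometric argument for the indecomposability step: it uses the criterion that a representation is indecomposable iff its stabilizer in the projective linear group is unipotent, and computes directly from the isomorphism $GL(\bd)\cdot O_M \cong GL(\bd(x))\times_{P_r} O_M$ that the $PGL(\bd)$-stabilizer of $M$ in $\rep_A(\bd)$ is $U\rtimes H$, where $U$ is the unipotent radical of $P_r$ and $H$ is the $PGL(\bd^x_r)$-stabilizer of $M$ in $\rep_{A^x}(\bd^x_r)$. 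Since $U$ is unipotent, $U\rtimes H$ is unipotent iff $H$ is. This stabilizer computation is the content of the paper's proof and is what makes it an illustration of geometric methods rather than a repackaging of the categorical statement. (It is also what feeds directly into the proof of Corollary~\ref{cor:schur}, where one needs the stabilizer to be \emph{trivial}, something your direct-sum argument does not supply.)
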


\begin{proof}
It is immediate from Proposition \ref{prop:xrbundle} that for any $\bd$ and $0 \leq r \leq \bd(x)$ we have a bijection
\begin{equation}\label{eq:orbits}
\left\{\begin{tabular}{c} isomorphism classes of \\ representations $M$ of $\rep_{A^x}(\bd^x_r)$ \\ with $r_{x_h}(M)=r$ \end{tabular} \right\}
\leftrightarrow
\left\{\begin{tabular}{c} isomorphism classes of \\ representations $N$ of $\rep_{A}(\bd)$\\ with $r_x(N)=r$ \end{tabular} \right\}.
\end{equation}
Let $S_{x_h}$ denote the simple supported at $x_h$. Clearly, if $M \in \rep_{A^x}(\bd^x_r)$ with $r_{x_h}(M)<r$, then $S_{x_h}$ is a summand of $M$. This shows that the only indecomposable representation of $A^x$ that does not appear in the sets on the left hand side of \eqref{eq:orbits} is $S_{x_h}$.

We are left to show that under the correspondence in \eqref{eq:orbits}, indecomposable representations are mapped to indecomposable representations. We use the well-known fact that a representation is indecomposable if and only if its stabilizer in the projective linear group is unipotent \cite[Cor. 2.10]{Brion-notes}. 
Let $H$ be the $PGL(\bd_x^r)$-stabilizer of $M \in \rep_{A^x}(\bd_r^x)$. Since  $GL(\bd) \cdot O_M\cong GL(\bd(x))\times_{P_r} O_M$, the $PGL(\bd)$-stabilizer of $M$ in $\rep_A(\bd)$ is $U \rtimes H$, where $U$ is the unipotent radical of $P_r\cong U \rtimes (GL(r) \times GL(\bd(x)-r))$. Clearly, $H$ is unipotent if and only if $U\rtimes H$ is.
\end{proof}

A representation $M$ of $A$ is called \emph{Schur} if $\End_A(M) = \kk$. A Schur representation is indecomposable. The following corollary (also well-known in representation theory) shows, in particular, that a Schur representation of $A^x$ does not necessarily correspond to a Schur representation of $A$ via the bijection in Corollary \ref{cor:indec}. 
\begin{corollary}\label{cor:schur}
If $M$ is a Schur representation of $A$, then either $h_x(M)=0$ or $t_x(M)=0$. Moreover, the bijection in \ref{cor:indec} descends to a bijection
\[
\left\{\begin{tabular}{c} isomorphism classes of \\ Schur representations $M\neq S_{x_h}$ of $A^x$ \\with either $M_{x_h}=0$ or $M_{x_t}=0$ \end{tabular} \right\}
\leftrightarrow
\left\{\begin{tabular}{c} isomorphism classes of \\ Schur representations of $A$ \end{tabular} \right\}.
\]
\end{corollary}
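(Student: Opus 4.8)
The plan is to combine Corollary \ref{cor:indec} with the description of the stabilizer computed in its proof, plus a direct analysis of $\End$ rings under the node-splitting correspondence. First I would prove the dichotomy: suppose $M$ is Schur over $A$ but both $h_x(M) \neq 0$ and $t_x(M) \neq 0$. Since $x$ is a node, the composite $t_x(M) \circ h_x(M)$ factors through two paths of length $2$ through $x$, hence is zero; thus $\im h_x(M) \subseteq \ker t_x(M)$, and in particular this is a proper nonzero subspace of $M_x$ (nonzero because $h_x(M) \neq 0$, proper because $t_x(M) \neq 0$). Pick any $W$ with $0 \neq W \subsetneq M_x$ satisfying $\im h_x(M) \subseteq W \subseteq \ker t_x(M)$, and let $e \colon M_x \to M_x$ be an idempotent or, better, a nilpotent endomorphism whose image is contained in $W$ and which vanishes on $W$ — concretely, choose a projection $\pi$ onto a complement supported appropriately; the cleanest choice is to let $\varphi_x \colon M_x \to M_x$ be any nonzero nilpotent map with $\im \varphi_x \subseteq \ker t_x(M)$ and $W' := \im h_x(M) \subseteq \ker \varphi_x$, and $\varphi_z = 0$ for $z \neq x$. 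Then $\varphi = (\varphi_z)$ commutes with all $M_\za$: for $h\za = x$ we get $\varphi_x M_\za = 0$ since $\im M_\za \subseteq \im h_x(M) \subseteq \ker \varphi_x$, and for $t\za = x$ we get $M_\za \varphi_x = 0$ since $\im \varphi_x \subseteq \ker t_x(M) \subseteq \ker M_\za$; all other components are zero on at least one side. So $\varphi$ is a nonzero non-invertible (nilpotent) endomorphism of $M$, contradicting $\End_A(M) = \kk$.

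Next I would establish that the bijection of Corollary \ref{cor:indec} restricts as claimed. The dichotomy just proven shows that the Schur representations of $A$ are exactly the Schur (hence indecomposable) representations of $A$ with $h_x(M) = 0$ or $t_x(M) = 0$; under the identification \eqref{eq:repAxembed} and Proposition \ref{prop:xrbundle}, a representation $N$ of $A$ with $r_x(N) = r$ corresponds to a representation $M$ of $A^x$ of dimension vector $\bd^x_r$, and $h_x(N) = 0$ translates to $r = 0$, i.e. $M_{x_h} = 0$, while $t_x(N) = 0$ translates to $M_{x_t} = 0$. So the condition ``$M_{x_h} = 0$ or $M_{x_t} = 0$'' on the left is exactly the image of the dichotomy condition on the right. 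It remains to check that, under the bijection on indecomposables, being Schur over $A$ corresponds exactly to being Schur over $A^x$ \emph{among such $M$}. This is where I expect the main (though modest) obstacle, and the key computation: from the proof of Corollary \ref{cor:indec}, if $M$ is an indecomposable $A^x$-representation with $M_{x_h} = 0$ or $M_{x_t} = 0$, say $M_{x_h} = 0$ so $r = 0$, then $P_0 = GL(\bd(x))$ itself and the correspondence is essentially the identity on the underlying data (the embedding \eqref{eq:repAxembed} just reinterprets the space $M_{x_t}$ as $M_x$ with $h_x = 0$); likewise if $M_{x_t} = 0$ then $r = \bd(x)$, $P_{\bd(x)} = GL(\bd(x))$, and again the underlying linear-algebra data is unchanged. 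In either extreme case the $GL(\bd)$-action and the $GL(\bd^x_r)$-action on the relevant data agree up to the harmless splitting of the trivial $GL(0)$ factor, so $\End_A(N) \cong \End_{A^x}(M)$ canonically — in particular one is $\kk$ iff the other is.

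To write this cleanly I would phrase the last point as: when $r \in \{0, \bd(x)\}$ the unipotent radical $U$ of $P_r$ is trivial, so the stabilizer computation in the proof of Corollary \ref{cor:indec} gives $\mathrm{Stab}_{PGL(\bd)}(N) = \mathrm{Stab}_{PGL(\bd^x_r)}(M)$ outright, and more strongly the full endomorphism algebras coincide because the bundle $GL(\bd) \times_{P_r} O_M$ degenerates to an ordinary orbit and the embedding \eqref{eq:repAxembed} becomes a bijection on the arrow spaces incident to $x$. Hence $M$ is Schur over $A^x$ iff $N$ is Schur over $A$. Combining: the bijection of Corollary \ref{cor:indec} sends Schur $A^x$-representations $M \neq S_{x_h}$ with $M_{x_h} = 0$ or $M_{x_t} = 0$ to Schur $A$-representations, and the dichotomy proven first shows this accounts for \emph{all} Schur $A$-representations, with the inverse again given by Corollary \ref{cor:indec}. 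This completes the proof, modulo the routine verification that $\varphi$ above is genuinely a well-defined non-scalar endomorphism, which is the only calculation needing care.
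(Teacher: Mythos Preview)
Your first paragraph is correct and gives a nice hands-on proof of the dichotomy via an explicit nilpotent endomorphism. The paper takes a different, more uniform route: it reads the dichotomy directly off the stabilizer formula $\mathrm{Stab}_{PGL(\bd)}(N) = U \rtimes H$ from the proof of Corollary~\ref{cor:indec}. Since $N$ is Schur iff this stabilizer is trivial, one gets simultaneously that $U$ is trivial (forcing $r \in \{0,\bd(x)\}$) and that $H$ is trivial (forcing $M$ Schur over $A^x$). This single equivalence handles both the dichotomy and the bijection at once.

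Your approach would also work, but there is a small gap in the second part. You assert that ``$t_x(N)=0$ translates to $M_{x_t}=0$'', but under the embedding \eqref{eq:repAxembed} the condition $t_x(N)=0$ only gives $t_{x_t}(M)=0$, not $\dim M_{x_t}=0$. Equivalently, your dichotomy $h_x(N)=0$ or $t_x(N)=0$ is \emph{weaker} than the paper's conclusion $r\in\{0,\bd(x)\}$, and it is the latter you need to invoke triviality of $U$. The fix is short: since $M$ is indecomposable (via Corollary~\ref{cor:indec}) and $x_t$ is a source in $Q^x$, the condition $t_{x_t}(M)=0$ with $M_{x_t}\neq 0$ forces $M=S_{x_t}$, hence $N=S_x$, which already has $M_{x_h}=0$. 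Alternatively, you can strengthen your first paragraph: if $t_x(N)=0$ and $h_x(N)$ is not surjective, the projection of $N_x$ onto a complement of $\im h_x(N)$ (zero at all other vertices) is again a non-scalar endomorphism, so Schur forces $r=\bd(x)$ directly. Either way, once you know $r\in\{0,\bd(x)\}$ your third paragraph goes through, and at that point it coincides with the paper's argument.
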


\begin{proof}
We use the notation as in the proof of Corollary \ref{cor:indec}. A representation is Schur if and only its stabilizer in the projective linear group is trivial. If $H$ is the $PGL(\bd_x^r)$-stabilizer of $M\neq S_{x_h}$ in $\rep_{A^x}(\bd_r^x)$, then $U \rtimes H$ is the $PGL(\bd)$-stabilizer of $M\in\rep_A(\bd)$. Hence $M\in \rep_A(\bd)$ is Schur if and only if $M\in \rep_{A^x}(\bd_r^x)$ is Schur and $U$ is trivial. But $U$ is trivial if and only if $r=r_x(M)$ is equal to either $0$ or $\bd(x)$, hence the claim.
\end{proof}

The following example illustrates a limitation of recovering geometric information about the whole representation variety by studying strata, motivating the use of collapsing maps in the next section.

\begin{example}\label{ex:stratfail}
Let $A=\kk[t]/(t^2)$ (i.e. the path algebra of the 1-loop quiver modulo the radical square).  Then $\rep_A(2)$ is a singular variety of dimension 2 defined by 2 equations:
\begin{equation}
\rep_A(2) = \setst{\left[\begin{smallmatrix}a & b \\ c & d\end{smallmatrix}\right]}{a+d=0,\ ad-bc=0}.
\end{equation}
Splitting the node yields $A^x \simeq \kk(\bullet \to \bullet)$, and an equivalence of categories $A$-$\module \simeq \cT \subset A^x$-$\module$, where $\cT$ is the full subcategory whose objects have no direct summand isomorphic to the simple at the sink.
Partitioning the orbits by dimension of the top of the corresponding module (equivalently, dimension of the kernel of the action of $t$), this induces a bijection:
\begin{equation}
\left\{\text{orbits in }\rep_A(2)\right\} \longleftrightarrow \left\{\text{non-zero orbit in }\rep_{A^x}((1,1))\right\} \coprod \rep_{A^x}((2,0)).
\end{equation}
Notice that the variety $\rep_A(2)$ containing all orbits appearing on the left hand side is singular of dimension 2, while the varieties for $A^x$ containing the orbits in each piece of the right hand side are smooth of dimensions 1 and 0.
\end{example}

\subsection{Passage to closed subvarieties}\label{sec:closed}
The stratifications of the previous section generally lose information about singularities of closed subvarieties, as seen in Example \ref{ex:stratfail}.  However, ``collapsing maps'' as in the work of Kempf \cite{Kempf76} can be used to partially rectify this.

\begin{proposition}\label{prop:birational}
Let $0 \leq r \leq \bd(x)$ and $C$ a $GL(\bd^x_r)$-stable irreducible closed subvariety of $\rep_{A^x}(\bd^x_r)$ with $r_{x_t}(C)=r$. Then $GL(\bd(x)) \cdot C$ is an irreducible closed subvariety of $\rep_A(\bd)$, and the following map is a proper birational morphism of $GL(\bd)$-varieties:
\[\Psi_C\colon \, GL(\bd(x))\times_{P_r} C \, \longrightarrow \, GL(\bd(x)) \cdot C\, , \quad (g,M) \mapsto g\cdot M.\]
\end{proposition}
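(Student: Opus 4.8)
The plan is to produce $\Psi_C$ as a restriction of the isomorphism $\Psi$ from Proposition \ref{prop:xrbundle}, and then analyze its behavior on the boundary. First I would verify that $GL(\bd(x)) \cdot C$ is closed and irreducible: irreducibility is immediate since it is the image of the irreducible variety $GL(\bd(x)) \times C$ under a morphism, and closedness follows because $C$ is $P_r$-stable (the unipotent radical of $P_r$ acts trivially on $\rep_{A^x}(\bd^x_r)$, as noted after \eqref{eq:repAxembed}), so $GL(\bd(x)) \cdot C = GL(\bd(x)) \times_{P_r} C$ is the image of the proper morphism from a fiber bundle over the projective variety $GL(\bd(x))/P_r$; properness of $\Psi_C$ follows from the same observation, since $GL(\bd(x)) \times_{P_r} C$ is projective over $C$ (hence over $\rep_A(\bd)$) and $GL(\bd(x))\cdot C$ is separated. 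That $\Psi_C$ is $GL(\bd)$-equivariant and well-defined is checked exactly as in the proof of Proposition \ref{prop:xrbundle}.

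The heart of the matter is birationality. Here I would use the hypothesis $r_{x_t}(C) = r$ crucially. Consider the open dense subset $C^\circ = \{M \in C \mid r_{x_t}(M) = r\}$ of $C$; since $C \subseteq \rep_{A^x}(\bd^x_r)$ and $\bd^x_r(x_h) = r$, for such $M$ the map $h_{x_h}(M)$ is surjective. The restriction of $\Psi_C$ to $GL(\bd(x)) \times_{P_r} C^\circ$ lands in $\rep_A^r(\bd)$ and agrees there with the restriction of the isomorphism $\Psi$ of Proposition \ref{prop:xrbundle} to the open subset $GL(\bd(x)) \times_{P_r} C^\circ \subseteq GL(\bd(x)) \times_{P_r} \rep_{A^x}(\bd^x_r)^\circ$. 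In particular $\Psi_C$ restricted to $GL(\bd(x))\times_{P_r} C^\circ$ is an open immersion onto the locally closed subset $GL(\bd(x))\cdot C^\circ$ of $\rep_A(\bd)$, which is dense in $GL(\bd(x))\cdot C$. Thus $\Psi_C$ is an isomorphism over a dense open subset of its target, hence birational.

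The step I expect to require the most care is confirming that $GL(\bd(x)) \times_{P_r} C^\circ$ is indeed dense in $GL(\bd(x)) \times_{P_r} C$ and that its image is dense in $GL(\bd(x))\cdot C$ — this is where $r_{x_t}(C) = r$ (rather than $r_{x_t}(C) < r$) is indispensable, since it guarantees $C^\circ \neq \varnothing$ and hence that the generic point of $C$ lies in the locus where $\Psi$ is already known to be an isomorphism. Density of $GL(\bd(x)) \times_{P_r} C^\circ$ in the fiber bundle follows from density of $C^\circ$ in $C$ together with the fact that the bundle projection $GL(\bd(x)) \times_{P_r} C \to GL(\bd(x))/P_r$ is flat with irreducible fibers $\cong C$, so preimages of dense opens are dense; alternatively one notes $GL(\bd(x)) \times C^\circ$ is dense $P_r$-stable open in $GL(\bd(x)) \times C$ and descends to the quotient. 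The rest is a matter of assembling these observations.
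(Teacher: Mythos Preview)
Your argument is correct and follows essentially the same route as the paper: properness comes from factoring $\Psi_C$ through the closed immersion $GL(\bd(x))\times_{P_r} C \hookrightarrow GL(\bd(x))/P_r \times \rep_A(\bd)$ followed by projection off the projective factor, and birationality comes from restricting to $C^\circ$ and invoking Proposition~\ref{prop:xrbundle}. One small phrasing issue: ``$GL(\bd(x))\times_{P_r} C$ is projective over $C$'' is not quite right (there is no natural morphism to $C$); what you want is that it is proper over $\rep_A(\bd)$ via the factorization just mentioned, which is exactly how the paper argues it.
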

\begin{proof}
The Grassmannian $GL(\bd(x))/P_r \cong \Gr(r, \kk^{\bd(x)})$ is a projective variety. We have an isomorphism of varieties $GL(\bd(x))\times_{P_r} \rep_A(\bd) \cong GL(\bd(x))/P_r \times \rep_A(\bd)$ given by the map $(g,x)\mapsto (g,gx)$. Hence, the multiplication map 
\[GL(\bd(x))\times_{P_r} \rep_A(\bd) \to \rep_A(\bd) \, , \,\, (g,M) \mapsto g\cdot M \] 
is proper.  Since $C$ is closed in $\rep_A(\bd)$, it then follows by Lemma \ref{lem:fiberbundle} that $GL(\bd(x)) \cdot C$ is closed in $\rep_A(\bd)$ as well. By Proposition \ref{prop:xrbundle}, the map $\Psi_C$ induces an isomorphism on the open subsets
\[GL(\bd(x))\times_{P_r} C^\circ_x \xrightarrow{\cong} (GL(\bd(x)) \cdot C)^\circ_x.\]
Hence, $\Psi_C$ is birational.
\end{proof}

\begin{proposition}\label{prop:bijection}
For each $0 \leq r \leq \bd(x)$, the maps below are mutually inverse, inclusion preserving bijections.
\[
\begin{split}
\left\{\begin{tabular}{c} irreducible closed\\ $GL(\bd^x_r)$-stable subvarieties\\ of $\rep_{A^x}(\bd^x_r)$ of $x_h$-rank $r$ \end{tabular} \right\}
&\leftrightarrow
\left\{\begin{tabular}{c} irreducible closed\\ $GL(\bd)$-stable subvarieties\\ of $\rep_{A}(\bd)$ of $x$-rank $r$ \end{tabular} \right\}
\\
C \qquad \qquad &\mapsto \qquad \qquad GL(\bd(x)) \cdot C\\
D \, \cap \, \rep_{A^x}(\bd_r^x) \qquad \qquad &\mapsfrom \qquad \qquad D
\end{split}
\]
\end{proposition}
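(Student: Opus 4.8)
The plan is to leverage Proposition \ref{prop:birational} together with the fiber bundle correspondence of Lemma \ref{lem:fiberbundle}, applied with $G = GL(\bd(x))$ and $H = P_r$ acting on $S = \rep_{A^x}(\bd^x_r)$. First I would verify that the two maps in the statement are well-defined on the indicated sets. For the forward map $C \mapsto GL(\bd(x)) \cdot C$: if $C$ is irreducible, closed, and $GL(\bd^x_r)$-stable in $\rep_{A^x}(\bd^x_r)$ with $x_h$-rank $r$, then Proposition \ref{prop:birational} already tells us $GL(\bd(x)) \cdot C$ is irreducible and closed in $\rep_A(\bd)$; it is clearly $GL(\bd)$-stable, and since $\Psi_C$ is birational and restricts to the isomorphism $GL(\bd(x)) \times_{P_r} C^\circ \xrightarrow{\cong} (GL(\bd(x)) \cdot C)^\circ$, the generic $x$-rank on $GL(\bd(x)) \cdot C$ equals the generic $x_h$-rank on $C$, namely $r$. (Here I use the observation preceding Proposition \ref{prop:xrbundle} that $r_x(GL(\bd(x)) \cdot C) = r_{x_t}(C) \leq r$, together with the fact that on $C^\circ$ the rank is exactly $r$.)

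Next I would address the backward map $D \mapsto D \cap \rep_{A^x}(\bd^x_r)$. Given $D$ irreducible, closed, $GL(\bd)$-stable of $x$-rank $r$, I need $D \cap \rep_{A^x}(\bd^x_r)$ to be irreducible, closed, $GL(\bd^x_r)$-stable of $x_h$-rank $r$. Closedness is automatic. Stability under $GL(\bd^x_r)$ holds since $\rep_{A^x}(\bd^x_r)$ is $GL(\bd^x_r)$-stable and $D$ is $GL(\bd)$-stable, hence $GL(\bd^x_r)$-stable (as $GL(\bd^x_r) \leq GL(\bd)$). The essential point is irreducibility: here I would invoke Lemma \ref{lem:fiberbundle} applied to $D$, noting that $D$ is in particular stable under the parabolic-type subgroup $P^x_r(\bd)$ (equivalently, stable under $P_r$ acting on the $GL(\bd(x))$ factor), so by Lemma \ref{lem:fiberbundle} $D = GL(\bd)\times_{P^x_r(\bd)} Z$ where $Z = D \cap \rep_{A^x}(\bd^x_r)$, and the bundle $GL(\bd)\times_{P^x_r(\bd)} Z$ is irreducible if and only if $Z$ is (since $GL(\bd)/P^x_r(\bd)$ is irreducible and the projection to it has irreducible fibers $\cong Z$). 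Hence $Z$ is irreducible. To see $Z$ has $x_h$-rank exactly $r$: the generic $x$-rank of $D$ is $r$, and $D^\circ \cap \rep_{A^x}(\bd^x_r)$ is a nonempty open subset of $Z$ (nonempty because $D^\circ$, being dense and $GL(\bd)$-stable, meets $\rep^r_A(\bd) \cong GL(\bd(x))\times_{P_r}\rep_{A^x}(\bd^x_r)^\circ$, and the $P_r$-orbit of any such point contains a point of $\rep_{A^x}(\bd^x_r)^\circ$), on which the $x_h$-rank equals $r$.

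Finally I would check the two composites are the identity. For $C \mapsto GL(\bd(x))\cdot C \mapsto (GL(\bd(x))\cdot C) \cap \rep_{A^x}(\bd^x_r)$: by Lemma \ref{lem:fiberbundle}, $(GL(\bd)\times_{P^x_r(\bd)} C) \cap \rep_{A^x}(\bd^x_r) = C$ on the nose, using the identification $GL(\bd(x))\times_{P_r} C = GL(\bd)\times_{P^x_r(\bd)} C$ and that $GL(\bd(x)) \cdot C = GL(\bd)\cdot C$ (the remark after Proposition \ref{prop:birational} and the unnumbered remark before it). For the other composite $D \mapsto D \cap \rep_{A^x}(\bd^x_r) \mapsto GL(\bd(x)) \cdot (D \cap \rep_{A^x}(\bd^x_r))$: again by Lemma \ref{lem:fiberbundle}, $GL(\bd)\cdot(D\cap \rep_{A^x}(\bd^x_r)) = GL(\bd) \times_{P^x_r(\bd)} (D\cap \rep_{A^x}(\bd^x_r))$ maps onto $D$, and since $D$ is the image of this bundle under the multiplication map (which by the proof of Proposition \ref{prop:birational} is proper with image $D$), we recover $D$. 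Inclusion-preservation is clear from the formulas for both maps. The main obstacle I anticipate is the bookkeeping in the irreducibility and rank statements for the backward map—specifically confirming that intersecting a $GL(\bd)$-stable irreducible variety with the slice $\rep_{A^x}(\bd^x_r)$ stays irreducible and retains the correct generic rank—but this is exactly what Lemma \ref{lem:fiberbundle} and Proposition \ref{prop:xrbundle} are designed to deliver, so the argument should be short.
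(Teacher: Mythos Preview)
Your overall strategy matches the paper's, and the well-definedness of the forward map via Proposition~\ref{prop:birational} is fine. However, there is a genuine gap in your use of Lemma~\ref{lem:fiberbundle}. That lemma describes the correspondence between $G$-stable subvarieties of the bundle $G\times_H S$ and $H$-stable subvarieties of $S$. You apply it as though $\rep_A(\bd)$ (or a $GL(\bd)$-stable closed subvariety $D$ of it) \emph{is} such a bundle, writing ``$D = GL(\bd)\times_{P^x_r(\bd)} Z$'' and ``$(GL(\bd)\times_{P^x_r(\bd)} C)\cap \rep_{A^x}(\bd^x_r)=C$''. But $GL(\bd(x))\cdot C\subseteq \rep_A(\bd)$ is only the \emph{image} of $GL(\bd(x))\times_{P_r} C$ under the collapsing map $\Psi_C$, and $\Psi_C$ is merely proper and birational, not an isomorphism: over points of $x$-rank strictly less than $r$ it has positive-dimensional fibers. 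So neither the identity $(GL(\bd(x))\cdot C)\cap \rep_{A^x}(\bd^x_r)=C$ nor the irreducibility of $D\cap\rep_{A^x}(\bd^x_r)$ follows formally from Lemma~\ref{lem:fiberbundle}. Proposition~\ref{prop:xrbundle} only rescues you on the open stratum $\rep_A^r(\bd)$, so you would still need to control what happens upon taking closures.

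The missing ingredient, which the paper supplies explicitly, is the statement that if $M\in\rep_{A^x}(\bd^x_r)$ and $g\in GL(\bd(x))$ satisfy $g\cdot M\in\rep_{A^x}(\bd^x_r)$, then there exists $g'\in GL(r)\times GL(\bd(x)-r)$ with $g'\cdot M = g\cdot M$. Equivalently: two points of the slice lying in the same $GL(\bd(x))$-orbit in $\rep_A(\bd)$ already lie in the same $GL(\bd^x_r)$-orbit. The paper proves this by an elementary row-equivalence argument on the matrices $h_x(M)$ and $t_x(M)$. Granting this, one gets $(GL(\bd(x))\cdot C)\cap\rep_{A^x}(\bd^x_r)=C$ for any $GL(\bd^x_r)$-stable $C$, which simultaneously yields injectivity of the forward map and the claimed form of the inverse; bijectivity then follows from the open-stratum correspondence (Proposition~\ref{prop:xrbundle} plus Lemma~\ref{lem:fiberbundle}) together with $C=\overline{C^\circ}$, $D=\overline{D^\circ}$. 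Your outline becomes correct once you insert this orbit-compatibility step and restrict the direct appeals to Lemma~\ref{lem:fiberbundle} to the locus where $\Psi$ is an isomorphism.
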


\begin{proof}

It follows by Proposition \ref{prop:birational} that $C\mapsto GL(\bd(x)) \cdot C$ is a well-defined function between the sets above. 
Each subvariety $C$ (resp $D$) in the set on the left (resp. right) hand side above is uniquely determined by $C^\circ_x$ (resp $D^\circ_x$) via $C=\ol{C^\circ_x}$ (resp. $D=\ol{D^\circ_x}$), hence the map $C\mapsto GL(\bd(x)) \cdot C$ is bijective by Proposition \ref{prop:xrbundle} and Lemma \ref{lem:fiberbundle}.

To show that the inverse map is the one claimed, we are left to show that 
$$(GL(\bd(x)) \cdot C) \, \bigcap \, \rep_{A^x}(\bd_r^x) = C$$
 (in fact, this holds for any $GL(\bd^x_r)$-stable subset $C$ of $\rep_{A^x}(\bd^x_r)$).  
The containment $\supseteq$ is immediate, so we must show the other direction. 
 Take $g\in GL(\bd(x))$ and $M\in C$ such that $g \cdot M \in \rep_{A^x}(\bd^x_r)$. We want to show that $g\cdot M \in C$. To do so, it is enough to find $g' \in GL(r)\times GL(\bd(x)-r)$ such that $g'\cdot M = g \cdot M$ since $C$ is $GL(\bd^x_r)$-stable.  Such a $g'$ exists if and only if $M$ is isomorphic to $g \cdot M$ when considered as a representation of $A^x$.
So this containment is essentially just saying that if two representations of $A$ are isomorphic (by $g$), then they are isomorphic when considered as representations of $A^x$ (by $g'$).
Let $B_1$ (resp. $B_2$)  be the matrix of the map $h_x(M)$ (resp. $h_x(g\cdot M)$). Since $M,\, g \cdot M \in \rep_{A^x}(\bd^x_r)$, the images of both $\alpha_1, \alpha_2$ are contained in $\kk^r$. Hence, only the first $r$ rows of $B_1$ (resp $B_2$) are non-zero. We denote the matrix formed by the first $r$ rows of $B_1$ (resp. $B_2$) by $B_1'$ (resp. $B_2'$). Since $B_1$ and $B_2$ are row-equivalent (i.e. have the same reduced row echelon form), the matrices $B_1'$ and $B_2'$ are also row-equivalent. Using the same argument with the maps with source $x$, we see that there is a matrix $g' \in GL(r)\times GL(\bd(x)-r)$ such that $g' \cdot M = g \cdot M$. 
\end{proof}

The following two corollaries are immediate from Proposition \ref{prop:bijection}. 
The first can also be easily obtained from stratification methods, but we record it here for completeness of our exposition.

\begin{corollary}\label{cor:irredcomp}
There is an injective map of sets
\begin{equation}\label{eq:irredcomp}
\coprod_{\bd} \left\{\begin{tabular}{c} irreducible components\\ of $\rep_{A}(\bd)$ \end{tabular} \right\}
\into
\coprod_{\be} \left\{\begin{tabular}{c} irreducible components\\ of $\rep_{A^x}(\be)$ \end{tabular} \right\}
\end{equation}
from the set of all irreducible components of all $\rep_A(\bd)$ to the set of all irreducible components of all $\rep_{A^x}(\be)$.
\end{corollary}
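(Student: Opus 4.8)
The plan is to define the map of \eqref{eq:irredcomp} one dimension vector at a time using the bijections of Proposition \ref{prop:bijection}, the only substantive point being that irreducible components go to irreducible components. First I would record the standard fact that every irreducible component $D$ of any $\rep_A(\bd)$ is automatically $GL(\bd)$-stable, since the connected group $GL(\bd)$ permutes the finite set of irreducible components and hence fixes each one. Given such a $D$, set $r := r_x(D)$; then $0\le r\le \bd(x)$ and $D$ belongs to the right-hand collection of Proposition \ref{prop:bijection} for this value of $r$, so it corresponds to the irreducible closed $GL(\bd^x_r)$-stable subvariety $C := D\cap \rep_{A^x}(\bd^x_r)$ of $\rep_{A^x}(\bd^x_r)$, which has $x_h$-rank $r$. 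I would define the map of the corollary by $D\mapsto C$.

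The key step is to verify that $C$ is an irreducible component of $\rep_{A^x}(\bd^x_r)$, not merely an irreducible closed subvariety. Suppose not; then $C\subsetneq C'$ for some irreducible component $C'$ of $\rep_{A^x}(\bd^x_r)$, which is again $GL(\bd^x_r)$-stable by the same connectedness argument. Since $\bd^x_r(x_h)=r$, every $M\in\rep_{A^x}(\bd^x_r)$ satisfies $r_{x_h}(M)\le r$, so $r_{x_h}(C')\le r$; combined with $C\subseteq C'$ and $r_{x_h}(C)=r$ this gives $r_{x_h}(C')=r$. Thus $C'$ lies in the left-hand collection of Proposition \ref{prop:bijection} for this $r$, and since the map $C\mapsto GL(\bd(x))\cdot C$ there is an inclusion-preserving bijection, the strict inclusion $C\subsetneq C'$ forces a strict inclusion $D = GL(\bd(x))\cdot C\subsetneq GL(\bd(x))\cdot C'$ of irreducible closed $GL(\bd)$-stable subvarieties of $\rep_A(\bd)$, contradicting the maximality of the component $D$.

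Finally I would check injectivity. Suppose components $D_i$ of $\rep_A(\bd_i)$, for $i=1,2$, are sent to the same element $C$ of $\coprod_{\be}\{\text{irreducible components of }\rep_{A^x}(\be)\}$, so that $C$ lives in a single $\rep_{A^x}(\be)$; equivalently $\bd_1^x{}_{r_1}=\bd_2^x{}_{r_2}=\be$ where $r_i=r_x(D_i)$. From $\be$ one reads off $r_i=\be(x_h)$ and recovers $\bd_i$ (it agrees with $\be$ away from $x$, with $\bd_i(x)=\be(x_h)+\be(x_t)$), so $\bd_1=\bd_2=:\bd$ and $r_1=r_2=:r$. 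Then $D_1$ and $D_2$ both belong to the right-hand set of Proposition \ref{prop:bijection} for this $\bd$ and $r$ and both satisfy $D_i\cap\rep_{A^x}(\bd^x_r)=C$, so $D_1=D_2$ by injectivity of that bijection. I expect the main obstacle to be the maximality verification in the second paragraph, and within it the point that must be pinned down carefully is that $r_{x_h}(C')=r$, which is exactly what is needed for Proposition \ref{prop:bijection} to apply to $C'$; the rest is bookkeeping with already-established bijections.
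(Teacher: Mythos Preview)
Your argument is correct and is precisely the unpacking of what the paper means when it says this corollary is ``immediate from Proposition \ref{prop:bijection}'' (the paper gives no further proof). The only thing you might streamline is the injectivity paragraph: once you observe that $\be$ determines both $\bd$ and $r$, injectivity is literally the injectivity of the bijection in Proposition \ref{prop:bijection}, so one sentence suffices.
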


\begin{corollary}\label{cor:orbclos}
Let $N\in \rep_A^r(\bd)$, and consider its $GL(\bd)$-orbit $O_N$ in $\rep_A(\bd)$. Take any $M\in O_N \, \bigcap \, \rep_{A^x}(\bd^x_r)$ (such element exists by  \eqref{eq:orbits}). Then  $\ol{O}_N=GL(\bd(x)) \cdot \ol{O}_M$, where $\ol{O}_M$ denotes the closure of the $GL(\bd_r^x)$-orbit $O_M$ in $\rep_{A^x}(\bd^x_r)$.
\end{corollary}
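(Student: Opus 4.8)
The plan is to deduce Corollary \ref{cor:orbclos} directly from Proposition \ref{prop:bijection} together with Corollary \ref{cor:indec} (specifically the orbit bijection \eqref{eq:orbits}). First I would note that by \eqref{eq:orbits}, since $N \in \rep_A^r(\bd)$ has $x$-rank exactly $r$, the orbit $O_N$ really does meet $\rep_{A^x}(\bd^x_r)$ in the locus of $x_h$-rank $r$; pick $M$ in this intersection. Then I observe that $\ol{O}_M$ (closure of the $GL(\bd^x_r)$-orbit inside $\rep_{A^x}(\bd^x_r)$) is an irreducible closed $GL(\bd^x_r)$-stable subvariety of $\rep_{A^x}(\bd^x_r)$, and I must check it has $x_h$-rank equal to $r$. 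This is where the one substantive point lies: rank is lower semicontinuous, so $r_{x_h}(\ol{O}_M) = r_{x_h}(O_M) = r$ because the rank of $h_{x_h}$ is constant $= r$ on the orbit $O_M$ (it equals $r_x(N) = r$ after transport along the embedding \eqref{eq:repAxembed}, using that $h_x$ only depends on the arrows into $x$, which land in $\kk^r$). So $\ol{O}_M$ lies in the left-hand set of Proposition \ref{prop:bijection}.

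Next I would apply the bijection of Proposition \ref{prop:bijection} to $\ol{O}_M$, which gives that $GL(\bd(x))\cdot \ol{O}_M$ is an irreducible closed $GL(\bd)$-stable subvariety of $\rep_A(\bd)$ of $x$-rank $r$, and — crucially — that $\bigl(GL(\bd(x))\cdot \ol{O}_M\bigr) \cap \rep_{A^x}(\bd^x_r) = \ol{O}_M$. The remaining task is to identify $GL(\bd(x))\cdot \ol{O}_M$ with $\ol{O}_N$. One containment is easy: $M \in O_N$ and $O_N$ is $GL(\bd)$-stable, so $GL(\bd(x))\cdot M \subseteq O_N$, whence $GL(\bd(x))\cdot \ol{O}_M \subseteq GL(\bd(x))\cdot \ol{O_M \text{ in } \rep_A(\bd)} \subseteq \ol{GL(\bd)\cdot M} = \ol{O}_N$ (using that taking $GL(\bd(x))$-saturation of a closed set is again closed, by the properness argument in the proof of Proposition \ref{prop:birational}, so it commutes appropriately with closure). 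For the reverse inclusion, I would argue that $\ol{O}_N$ is an irreducible closed $GL(\bd)$-stable subvariety of $\rep_A(\bd)$ with $x$-rank $r$ (again by lower semicontinuity, $r_x$ is constant $=r$ on the dense orbit $O_N$, hence equals $r$ generically on $\ol{O}_N$), so by Proposition \ref{prop:bijection} it corresponds to $D' := \ol{O}_N \cap \rep_{A^x}(\bd^x_r)$; since $M \in O_N \cap \rep_{A^x}(\bd^x_r) \subseteq D'$ and $D'$ is $GL(\bd^x_r)$-stable and closed, we get $\ol{O}_M \subseteq D'$, and applying the inclusion-preserving bijection back yields $\ol{O}_N = GL(\bd(x))\cdot D' \supseteq GL(\bd(x))\cdot \ol{O}_M$.

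Alternatively, and perhaps more cleanly, I would bypass the two-sided inclusion chase by invoking injectivity of the bijection in Proposition \ref{prop:bijection}: both $GL(\bd(x))\cdot \ol{O}_M$ and $\ol{O}_N$ are irreducible closed $GL(\bd)$-stable subvarieties of $\rep_A(\bd)$ of $x$-rank $r$; their preimages under the bijection are $\ol{O}_M$ and $\ol{O}_N \cap \rep_{A^x}(\bd^x_r)$ respectively; and these two preimages agree because each is the closure (in $\rep_{A^x}(\bd^x_r)$) of the common locally closed set $O_N \cap \rep_{A^x}(\bd^x_r)^\circ$, which by \eqref{eq:orbits} is a single $GL(\bd^x_r)$-orbit, namely $O_M$. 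Hence $GL(\bd(x))\cdot \ol{O}_M = \ol{O}_N$. The main obstacle is really just the bookkeeping around $x$-rank: one must verify that passing to orbit closures does not drop the rank $r$ (so that Proposition \ref{prop:bijection} applies on the nose), and that the $GL(\bd(x))$-saturation operation genuinely takes closed sets to closed sets so it commutes with the closures in the way the argument needs — but both of these are already furnished by lower semicontinuity of matrix rank and by the properness established in Proposition \ref{prop:birational}.
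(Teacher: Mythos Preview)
Your proposal is correct and follows the same approach the paper intends: the paper states that this corollary is ``immediate from Proposition~\ref{prop:bijection}'', and your argument is precisely a careful unpacking of that immediacy, verifying that both $\ol{O}_N$ and $GL(\bd(x))\cdot\ol{O}_M$ lie in the right-hand set of the bijection (via the $x$-rank check) and then matching them either by the two-inclusion chase or by injectivity. The only minor comment is that your first inclusion argument can be streamlined: since $O_M\subseteq O_N$ and $\rep_{A^x}(\bd^x_r)$ is closed in $\rep_A(\bd)$, you get $\ol{O}_M\subseteq\ol{O}_N$ directly, and then $GL(\bd(x))\cdot\ol{O}_M\subseteq\ol{O}_N$ by $GL(\bd)$-stability of $\ol{O}_N$, without needing to invoke properness to commute saturation with closure.
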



\subsection{Irreducible components}
The results of this section can be derived from the methods used in \cite{BCHZ15} and \cite{HZS17} to study radical square zero algebras and truncated path algebras, respectively; see below for more specific references.  
We include our own proofs and examples for completeness, and to emphasize the ``relative setting'' we work in:
namely, repeated application of Corollary \ref{cor:irredcomp} can be used to classify irreducible components of $\rep_A(\bd)$ if splitting the nodes eventually results in \emph{any} representation variety whose irreducible components are known.

\begin{example}\label{ex:irrcomp}
Consider the algebra $A=\kk Q/I$ where $Q$ is given below and $I$ is generated by the 12 relations necessary to make $x$ a node, along with the relation $cba=0$.
\[
Q=\vcenter{\hbox{\begin{tikzpicture}[point/.style={shape=circle,fill=black,scale=.5pt,outer sep=3pt},>=latex]
  \node (x) at (0,1) {$x$};
  \node (1) at (-1,0) {1};
  \node (2) at (1,0) {2};
  \node (3) at (-1,1) {3};
  \node (4) at (-1,2) {4};
  \node (5) at (1,1) {5};
  \node (6) at (1,2) {6};
  \path[->]
	(4) edge (x)
	(3) edge (x) 
	(1) edge node[left] {$c$} (x)
	(2) edge node[above] {$b$} (1)
	(x.60) edge (6.210)
          (x.30) edge (6.240)
	(x) edge (5)
	(x) edge node[right] {$a$} (2);
   \end{tikzpicture}}}
\qquad\rightsquigarrow \qquad
\quad
Q^x=\vcenter{\hbox{\begin{tikzpicture}[point/.style={shape=circle,fill=black,scale=.5pt,outer sep=3pt},>=latex]
  \node (xh) at (-1,1) {$x_h$};
  \node (xt) at (1,1) {$x_t$};
  \node (1) at (-1,0) {1};
  \node (2) at (1,0) {2};
  \node (3) at (-2,1) {3};
  \node (4) at (-2,2) {4};
  \node (5) at (2,1) {5};
  \node (6) at (2,2) {6};
  \path[->]
	(4) edge (xh)
	(3) edge (xh) 
	(1) edge node[left] {$c$} (xh)
	(2) edge node[above] {$b$} (1)
	(xt.60) edge (6.210)
          (xt.30) edge (6.240)
	(xt) edge (5)
	(xt) edge node[right] {$a$} (2);
   \end{tikzpicture}}}
\]
Notice that $A$ does not fall within a well-studied class such as special biserial, radical square zero, etc.  
The overlapping relations make direct analysis of irreducible components of $\rep_A(\bd)$ challenging. Also, $A$ is not representation finite, so irreducible components cannot be determined by computing dimensions of Hom spaces as in \cite{Zwara99}. But each $\rep_{A^x}(\bd^x_r)$ decomposes as the product of an affine space with a union of orbit closures (determined by the relation $cba$) in a representation variety for the subquiver of Dynkin type $\bbA_4$ with arrows $a, b,c$.  So these can be explicitly determined for any given $\bd^x_r$.

For example, take $\bd=(3,2,2,1,3,3,3)$ (with the convention that $\bd(x)$ is the last entry). By Proposition \ref{prop:bijection}, the irreducible components of $\rep_A(\bd)$ are among the $GL(\bd)$-saturations of the irreducible components of $\rep_{A^x}(\bd^x_r)$ for $r=0,1,2,3$, which reduce to the quiver of type $\bbA_4$ with the following dimension vector
\[\begin{tikzpicture}[point/.style={shape=circle,fill=black,scale=.5pt,outer sep=3pt},>=latex]
  \node (1) at (0,0){$(3-r)$};
  \node (2) at (2,0) {2};
  \node (3) at (4,0) {3};
  \node (4) at (6,0) {$r$};
  \path[->]
	(1) edge node[above] {$a$} (2)
	(2) edge node[above] {$b$} (3)
	(3) edge node[above] {$c$} (4);
   \end{tikzpicture}
\]
We first describe the irreducible components of the $\bbA_4$ quiver above, with the convention that indecomposables correspond to roots (their dimension vectors). In the cases $r=0$ and $r=3$, the representation varieties are irreducible affine spaces $C_0, C_3$. When $r=1$, the representation variety has two components $C_1$ and $C_1'$, which are the orbit closures of the representations $(1,1,1,0)^{\oplus 2} \oplus (0,0,1,1)$ and $(1,0,0,0)\oplus(1,1,1,0) \oplus (0,1,1,1) \oplus (0,0,1,0)$, respectively. For $r=2$, there are again two components $C_2$ and $C_2'$, which are the closures of  $(1,1,1,0)\oplus(0,1,1,1)\oplus(0,0,1,1)$ and $(1,0,0,0)\oplus (0,1,1,1)^{\oplus 2} \oplus (0,0,1,0)$, respectively.

By abuse of notation, we use the same symbols for the components for $A^x$ obtained from the components above. 
Since all the components have maximal $x_h$-rank, their $GL(\bd)$-saturations yield irreducible closed subsets in $\rep_A(\bd)$ according to Proposition \ref{prop:bijection}. By looking at generic ranks of matrices along each non-trivial path in $Q^x$, we see that there are no inclusions among the saturations of irreducible components. Here we must use all paths in $Q^x$ and not just those in the $\mathbb{A}_4$ subquiver: for example, the ranks of all paths in the $\mathbb{A}_4$ subquiver for points in $C_0$ are less than those ranks for points in $C_1$, but on the other hand, the rank of the map over the arrow $x\to 5$ is generically 3 on $C_0$ but generically 2 on $C_1$, so we cannot have $C_0$ contained in $C_1$.
Thus $\rep_A(\bd)$ has $6$ irreducible components, which are given by the $GL(\bd)$-saturations of $C_0,C_1,C_1',C_2,C_2',C_3$. 
\end{example}

\medskip

In Proposition \ref{prop:bijection} the irreducible components of $\rep_{A^x}(\bd^x_r)$ do not necessarily yield irreducible components in $\rep_A(\bd)$ under the map $C\to GL(\bd(x)) \cdot C$. Nevertheless, we give a condition when this indeed happens.

\begin{lemma}\label{lem:comp}
Let $C$ be an irreducible component of $\rep_{A^x}(\bd^x_r)$ with $r_{x_h}(C)=r$, and assume that there is a representation $M\in C$ such that the map
$t_{x_t}(M)$ is injective. Then $GL(\bd(x)) \cdot C$ is an irreducible component of $\rep_A(\bd)$.
\end{lemma}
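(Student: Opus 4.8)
The plan is to take an arbitrary irreducible component $D$ of $\rep_A(\bd)$ that contains $GL(\bd(x)) \cdot C$ and show $D = GL(\bd(x)) \cdot C$; this simultaneously produces such a component and identifies it. First I would record generalities: by Proposition \ref{prop:birational}, $GL(\bd(x)) \cdot C$ is irreducible and closed in $\rep_A(\bd)$, so it lies in some component $D$; and since $GL(\bd)$ is connected, it fixes every irreducible component of $\rep_A(\bd)$ setwise, so $D$ is $GL(\bd)$-stable. Because $r_{x_h}(C) = r$ by hypothesis, Proposition \ref{prop:bijection} shows that $GL(\bd(x)) \cdot C$ has $x$-rank exactly $r$, so $r_x(D) \geq r$.

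The \emph{crucial} step is the reverse inequality $r_x(D) \leq r$, and this is where the assumption on $t_{x_t}$ is used. I would fix $M \in C$ with $t_{x_t}(M)$ injective, as supplied by hypothesis, and examine $M$ inside $\rep_A(\bd)$ via the embedding \eqref{eq:repAxembed}. Write $V = \kk^{\bd(x)} = V_h \oplus V_t$, where $V_h = \kk^r$ is spanned by the first $r$ coordinates. Membership of $M$ in $\rep_{A^x}(\bd^x_r)$ forces $t_x(M)$ to vanish on $V_h$, while injectivity of $t_{x_t}(M)$ says exactly that the restriction of $t_x(M)$ to $V_t$ is injective (this incorporates the contribution of any loops at $x$, which become arrows $x_t \to x_h$). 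Hence $\ker t_x(M) = V_h$ and $\rank t_x(M) = \bd(x) - r$. On the other hand, since $x$ is a node, every $N \in \rep_A(\bd)$ satisfies $t_x(N) \circ h_x(N) = 0$, so $\im h_x(N) \subseteq \ker t_x(N)$ and therefore $\rank h_x(N) \leq \bd(x) - \rank t_x(N)$. By lower semicontinuity of the rank of $t_x$, there is a Zariski-open neighborhood $U$ of $M$ in $\rep_A(\bd)$ on which $\rank t_x(N) \geq \bd(x) - r$; combined with the previous inequality, $\rank h_x(N) \leq r$ for all $N \in U$. Since $D$ is irreducible with $M \in D \cap U$, the set $D \cap U$ is dense in $D$, and as the locus $\{N : \rank h_x(N) \leq r\}$ is closed we conclude $D \subseteq \{N : \rank h_x(N) \leq r\}$, i.e. $r_x(D) \leq r$. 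Thus $r_x(D) = r$.

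With $r_x(D) = r$ in hand, Proposition \ref{prop:bijection} applies: $D$ corresponds to the irreducible closed $GL(\bd^x_r)$-stable subvariety $C' := D \cap \rep_{A^x}(\bd^x_r)$, and $GL(\bd(x)) \cdot C' = D$. Intersecting the inclusion $GL(\bd(x)) \cdot C \subseteq D$ with $\rep_{A^x}(\bd^x_r)$ and using the identity $(GL(\bd(x)) \cdot C) \cap \rep_{A^x}(\bd^x_r) = C$ established in the proof of Proposition \ref{prop:bijection}, we obtain $C \subseteq C'$. Since $C$ is an irreducible component of $\rep_{A^x}(\bd^x_r)$ and $C'$ is irreducible, maximality forces $C = C'$, hence $D = GL(\bd(x)) \cdot C$. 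Therefore $GL(\bd(x)) \cdot C$ is an irreducible component of $\rep_A(\bd)$.

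I expect the main obstacle to be the rank bookkeeping at the node: carefully translating, through the explicit embedding \eqref{eq:repAxembed}, the injectivity of $t_{x_t}(M)$ into the equality $\rank t_x(M) = \bd(x) - r$ (correctly accounting for loops at $x$), and then combining it with the node relation $t_x \circ h_x = 0$ to get a uniform bound on $x$-rank throughout a neighborhood of $M$. Once that local bound is available, the passage to the component $D$ and the conclusion via Proposition \ref{prop:bijection} are routine.
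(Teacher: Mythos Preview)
Your proof is correct and follows essentially the same approach as the paper. Both arguments hinge on the same two observations: the node relation $t_x \circ h_x = 0$ forces $\rank h_x(N) + \rank t_x(N) \leq \bd(x)$ for all $N$, and the injectivity hypothesis gives $\rank t_x(M) = \bd(x) - r$; semicontinuity of rank then bounds $r_x$ on any irreducible component containing $M$. The paper phrases this as a contradiction (a strictly larger component $C'$ would have $r_x(C') > r$, forcing $\dim\ker t_x(N) > r$ on all of $C'$, contradicting $M \in C'$), while you argue directly and then invoke Proposition~\ref{prop:bijection} to identify $D$ with $GL(\bd(x))\cdot C$; these are logically equivalent reorganizations of the same idea.
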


\begin{proof}
Assume by contradiction that $GL(\bd(x)) \cdot C$ is strictly contained in an irreducible component $C'$ of $\rep_A(\bd)$. By Proposition \ref{prop:bijection}, we must have $r_x(C')>r$. On the open subset of $C'$ of representations $N$ with $r_x(N)=r_x(C')$, we must have $\dim \ker t_x(N)>r$. Since $C'$ is irreducible, this shows that for all representations $N\in C'$, we have $\dim \ker t_x(N)>r$. But the assumptions imply that for $M$ (viewed as a representation in $\rep_A(\bd)$), we have $\dim \ker t_x(M)=r$. Hence $M\notin C'$, a contradiction.
\end{proof}

Applying recursively Proposition \ref{prop:bijection}, we give an explicit description of the irreducible components of representation varieties for radical square zero algebras.
All parts of the theorem below appeared elsewhere in various forms, where our $\br$ corresponds to the dimension vector of the semisimple module $T$:
\begin{itemize}[leftmargin=1em]
\item Part (1) is a consequence of the more general results on varieties of representations with a fixed radical layering for truncated path algebras in \cite{BHZT09} (Theorem 5.3 by way of Proposition 2.2);
\item Part (2) is found in \cite[Definition and Comments 3.3]{BCHZ15};
\item Part (3) is equivalent to the representation-theoretic version \cite[Proposition 3.9]{BCHZ15}.
\end{itemize}
We include a more geometric proof as it implicitly also constructs a resolution of singularities of each $C_\br$, which is used for example in \cite[Corollary 4.2]{collapsechar}.
\begin{theorem}\label{thm:comprad}
Consider a radical square zero algebra $A=\kk Q/\kk Q_{\geq 2}$ and a dimension vector $\bd$. For a dimension vector $\br \leq \bd$, we denote by $C_\br$ the closure of the set of representations $M \in \rep_A(\bd)$ such that $r_x(M)=\br(x)$, for all $x\in Q_0$. Furthermore, set $\bs=\bd-\br$, and for $x\in Q_0$  let $l_x$ be the number of loops at $x$ and put 
\[u_x(\br)= \sum_{h\za = x} \bs(t\za) \, -\br(x) , \quad \mbox{ and } \quad  v_x(\br)=\sum_{t\za =x} \br(h\za) \, -\bs(x).\]
Then we have the following:
\begin{enumerate}[leftmargin=2em]
\item $C_\br$ is irreducible;
\item $C_\br$ is non-empty if and only if $u_x(\br)\geq 0$ for all $x\in Q_0$;
\item $C_\br$ is an irreducible component of $\rep_A(\bd)$ if and only if it is non-empty and $v_x(\br)\geq 0$ for all $x \in Q_0$ with $u_x(\br)>l_x$. Moreover, all irreducible components of $\rep_A(\bd)$ are of such form.
\end{enumerate}
\end{theorem}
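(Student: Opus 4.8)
The plan is to reduce the entire statement to the \emph{separated quiver} $Q^{s}$, obtained from $Q$ by splitting \emph{every} vertex. Since $\rad^{2}\kk Q=0$, the algebra of $Q^{s}$ has no relations (its Jacobson radical is spanned by the arrows, which are length-one paths of $Q$ and hence not in $\rad^{2}\kk Q$), so $\rep_{\kk Q^{s}}(\be)$ is an affine space for every $\be$. Splitting the nodes $x_{1},\dots ,x_{n}=Q_{0}$ one at a time---each vertex stays a node after the previous splits---and composing the embeddings of Section~\ref{sec:nodebundles}, one gets a chain $\rep_{\kk Q^{s}}(\be)\subseteq\rep_{A}(\bd)$ in which $\be$ equals $\br(x)$ at the head copy $x_{h}$ and $\bs(x)$ at the tail copy $x_{t}$ of each $x$. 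Under these inclusions the number $r_{x}$ becomes the rank of the family of arrows into $x_{h}$, and the iterated version of Propositions~\ref{prop:xrbundle}--\ref{prop:bijection} (using that base changes at distinct vertices do not affect the rank conditions elsewhere) yields
\[
\setst{M\in\rep_{A}(\bd)}{r_{x}(M)=\br(x)\ \text{for all }x}=GL(\bd)\cdot \tilde U_{\br},\qquad
\tilde U_{\br}:=\setst{M\in\rep_{\kk Q^{s}}(\be)}{\rank h_{x_{h}}(M)=\br(x)\ \text{for all }x},
\]
an open subset of an affine space. Then (1) is immediate, since $C_{\br}$ is the closure of the image of the irreducible variety $GL(\bd)\times\tilde U_{\br}$ under the action map; and (2) holds because $\tilde U_{\br}\ne\varnothing$ iff the independent generic matrices $h_{x_{h}}$ of sizes $\bd(x)\times\sum_{h\za=x}\bs(t\za)$ can all attain rank $\br(x)$, i.e.\ iff $\sum_{h\za=x}\bs(t\za)\ge\br(x)$, that is $u_{x}(\br)\ge0$, for every $x$. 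No hypothesis on $\charac\kk$ is needed.

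For (3), every representation has some layer, so $\rep_{A}(\bd)=\bigcup_{\br}C_{\br}$ is a finite union of irreducible closed sets and hence every irreducible component is one of the $C_{\br}$; this proves the last assertion of (3). Since $M\mapsto\rank h_{x}(M)$ is lower semicontinuous, $C_{\br}\subseteq C_{\br'}\Rightarrow\br\le\br'$, so a nonempty $C_{\br}$ fails to be a component exactly when $C_{\br}\subsetneq C_{\br'}$ for some nonempty $C_{\br'}$ with $\br<\br'$. I will show this is equivalent to the existence of a vertex $y$ with $u_{y}(\br)>l_{y}$ and $v_{y}(\br)<0$, i.e.\ the negation of the stated criterion. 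For the forward direction, pick $y$ with $\br(y)<\br'(y)$; as $u_{y}(\cdot)$ is antitone and $\br+e_{y}\le\br'$ (with $e_{y}$ the dimension vector having a single $1$ at $y$), the inequality $u_{y}(\br')\ge0$ from (2) forces $u_{y}(\br)-l_{y}-1=u_{y}(\br+e_{y})\ge u_{y}(\br')\ge0$, so $u_{y}(\br)>l_{y}$. Moreover the node relation at $y$ gives $\im h_{y}(N)\subseteq\ker t_{y}(N)$ for all $N$, so every point of $U_{\br'}$ has $\rank t_{y}\le\bd(y)-\br'(y)<\bs(y)$; since $U_{\br'}$ is dense in $C_{\br'}\supseteq C_{\br}$ and this is a closed condition, it holds on all of $C_{\br}$. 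But the generic value of $\rank t_{y}$ on $\tilde U_{\br}$ is $\min\bigl(\bs(y),\sum_{t\za=y}\br(h\za)\bigr)$, whence $\sum_{t\za=y}\br(h\za)<\bs(y)$, i.e.\ $v_{y}(\br)<0$.

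For the reverse direction, assume such a $y$ and set $\br'=\br+e_{y}$; I will show $C_{\br'}\ne\varnothing$ and $C_{\br}\subseteq C_{\br'}$, which (since $\br\ne\br'$) makes the inclusion proper and shows $C_{\br}$ is not a component. Nonemptiness is $u_{z}(\br+e_{y})\ge0$ for all $z$: at $z=y$ it is $u_{y}(\br)-l_{y}-1\ge0$; for $z\ne y$ not receiving an arrow from $y$ it is $u_{z}(\br)\ge0$; and for $z$ receiving $m\ge1$ arrows from $y$ it is $u_{z}(\br)\ge m$, which I deduce from $v_{y}(\br)<0$: the estimates $m\,\bs(y)-\br(z)\le u_{z}(\br)$ and $m\,\br(z)\le\sum_{t\za=y}\br(h\za)\le\bs(y)-1$, if combined with $u_{z}(\br)\le m-1$, give $m(\bs(y)-1)\le\br(z)-1$ and hence $(\bs(y)-1)(m^{2}-1)\le -m<0$, which is impossible. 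For the inclusion, $v_{y}(\br)<0$ makes the generic value of $\rank t_{y}$ on $\tilde U_{\br}$ strictly below $\bs(y)$, so $\rank t_{y}(M)<\bs(y)$ for \emph{every} $M$ in the layer-$\br$ stratum, and each such $M$ has a line $L\subseteq\ker t_{y}(M)$ with $L\not\subseteq\im h_{y}(M)$. Using $u_{y}(\br)>0$, one chooses a linear functional $\ell$ on $\bigoplus_{h\za=y}M_{t\za}$ that vanishes on $\im h_{z}(M)$ for every tail $z$ of an arrow into $y$ and for which $h_{y}(M)+(v_{L}\otimes\ell)$ has image $\im h_{y}(M)\oplus L$; the one-parameter family replacing $h_{y}(M)$ by $h_{y}(M)+t\,(v_{L}\otimes\ell)$ then lies in $\rep_{A}(\bd)$ for all $t$ (the relations at $y$ hold since $L\subseteq\ker t_{y}(M)$, those at the tails since $\ell$ vanishes on the relevant images, all others unchanged), has layer $\br+e_{y}$ for $t\ne0$ and layer $\br$ at $t=0$. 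Hence every $M$ of layer $\br$ lies in $C_{\br+e_{y}}$, so $C_{\br}\subseteq C_{\br+e_{y}}$.

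The step I expect to be the main obstacle is this last degeneration: one must verify that the perturbed tuple satisfies \emph{all} node relations of $A$ (not only those involving $y$), that $\ell$ can be arranged so the rank of the arrows into $y$ jumps by exactly one---which is precisely where $u_{y}(\br)>0$ is used---and that the construction is uniform over the whole layer-$\br$ stratum, so that it yields an inclusion of closures rather than merely of orbit closures. The arithmetic lemma in the nonemptiness step and the generic-rank computation for $t_{y}$ on $\tilde U_{\br}$ are routine but must be carried out with care.
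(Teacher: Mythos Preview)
Your approach to (1) and (2), and the overall organization of (3), matches the paper's proof: reduce to the separated bipartite quiver $Q^{\mathrm{sp}}$, where representation varieties are affine spaces, and read off everything from there. Your forward direction in (3) (if $C_{\br}\subsetneq C_{\br'}$ then any $y$ with $\br(y)<\br'(y)$ satisfies $u_y(\br)>l_y$ and $v_y(\br)<0$) is correct and is essentially the paper's ``no containments'' argument spelled out.

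The place where your route genuinely diverges from the paper, and where there is a real gap, is the inclusion $C_{\br}\subseteq C_{\br+e_y}$. The paper does not build a degeneration at all: it observes that since $v_y(\br)<0$, the map $t_{y_t}$ (in the separated quiver) is never injective, so every $M$ in the $\br$-stratum splits off a copy of $S_{y_t}$; writing $M\cong N\oplus S_{y_t}$ and noting that $N\oplus S_{y_h}\in\rep_{Q^{\mathrm{sp}}}(\be')$ gives $M\cong N\oplus S_y\in C_{\br+e_y}$ immediately. Your one-parameter family, by contrast, has an unhandled case. When $y$ carries loops, perturbing $h_y$ also perturbs $t_y$: for a loop $\zb$ and any arrow $\za$ into $y$, the composition $M_\zb^{\mathrm{new}}\circ M_\za^{\mathrm{new}}$ has a $t^2$-term $t^2\,\ell_\za(w)\,\ell_\zb(v_L)\,v_L$, and nothing in your conditions forces $\ell_\zb(v_L)=0$. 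Your check ``the relations at $y$ hold since $L\subseteq\ker t_y(M)$'' uses the \emph{old} $t_y$, not the perturbed one. The fix is to impose the extra constraints $\ell_\zb(v_L)=0$ for each loop $\zb$ at $y$; the dimension count for the existence of $\ell$ then needs $u_y(\br)>l_y$ rather than merely $u_y(\br)>0$, which is exactly the hypothesis you have available---so this is also why the $l_y$ enters the criterion, which your writeup does not currently explain.

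A minor point: your arithmetic for nonemptiness of $C_{\br+e_y}$ is correct but heavier than necessary. Since $v_y(\br)<0$ gives $\br(z)\le\sum_{t\za=y}\br(h\za)<\bs(y)$, one has directly $u_z(\br+e_y)\ge (\bs(y)-1)-\br(z)\ge 0$ for any $z\neq y$ receiving an arrow from $y$, which avoids the contradiction argument.
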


\begin{proof}
Let $Q^{\op{sp}}$ denote the quiver obtained by splitting all the vertices of $A$. Clearly, $|Q^{\op{sp}}_0| = 2 | Q_0|$, and the vertices of $Q^{\op{sp}}$ are sinks $x_h$ and sources $x_t$ corresponding to the vertices $x\in Q_0$. Since the quiver $Q^{\op{sp}}$ has no relations, all of its representation varieties are irreducible affine spaces.

Fix $\br \leq \bd$ and let $\bs=\bd-\br$. For (1), we can assume that $C_\br$ is non-empty. To show that $C_\br$ is irreducible, it is enough to show that $C_\br^\circ:= \displaystyle\bigcap_{x \in Q_0} (C_\br)^\circ_x$ is so. Starting with the representation variety $\rep_A(\bd)$ and splitting the nodes of $A$ repeatedly w.r.t. the ranks given by the dimension vector $\br$, we arrive at the representation variety $\rep_{Q^{\op{sp}}}(\be)$, where $\be$ is the dimension vector given by $\be(x_h) = \br(x)$ and $\be(x_t)=\bs(x)$ for $x\in Q_0$. Via the isomorphisms in Proposition \ref{prop:xrbundle} (applied recursively), $C_\br^\circ$ corresponds to the open subset of representations $N \in \rep_{Q^{\op{sp}}}(\be)$ such that $r_{x_h}(N) = \be(x_h) = \br(x)$ for all $x\in Q_0$. Since the latter is irreducible, this shows that $C_\br^\circ$ is irreducible as well. Moreover, under the bijections in Proposition \ref{prop:bijection} (applied recursively), $C_\br$ corresponds to $\rep_{Q^{\op{sp}}}(\be)$.

Now consider part (2). Given $x\in Q_0$, it is easy to see that there is a representation $N \in \rep_{Q^{\op{sp}}}(\be)$ such that $r_{x_h}(N) = \br(x)$ if and only if $u_x(\br) \geq 0$. Since $\rep_{Q^{\op{sp}}}(\be)$ is irreducible, we obtain by Proposition \ref{prop:xrbundle} applied as above that $C_\br$ is non-empty if and only if $u_x(\br) \geq 0$ for all $x\in Q_0$.

Now take $C_\br$ non-empty in part (3). We show that if $\br$ satisfies $u_x(\br) > l_x$ and $v_x(\br)<0$ for some $x\in Q_0$, then $C_\br \subset C_{\br'}$, where $\br'(x)=\br(x)+1$, and $\br'(y)=\br(y)$ for $y\in Q_0 \setminus\{x\}$. First, we show that $C_{\br'}$ is non-empty. We have $u_x(\br')= u_x(\br)-l_x-1 \geq 0$. Since $v_x(\br)<0$, we have in particular that $\bs(x)>\br(y)$ for any vertex $y\neq x$ such that there is an arrow from $x$ to $y$, and so $u_y(\br')\geq \bs'(x)-\br'(y) = \bs(x)-1-\br(y) \geq 0$. We obtain that $u_z(\br')\geq 0$ for all $z\in Q_0$, hence $C_{\br'}$ is non-empty and it corresponds to $\rep_{Q^{\op{sp}}}(\be')$ via the bijections in Proposition \ref{prop:bijection}. Now take any $M\in C_\br$. By abuse of notation, we can view $M$ as a representation in $\rep_{Q^{\op{sp}}}(\be)$. We see that since $v_x(\br)<0$, we have a decomposition $M\cong N \oplus S_{x_t}$, where $S_{x_t}$ denotes the simple at $x_t$. Hence, as representations of $A$, we have $M\cong N \oplus S_x$. But then $N\oplus S_{x_h} \in \rep_{Q^{\op{sp}}}(\be')$ which shows that $M \cong N\oplus S_x \in C_{\br'}$ as well. Hence, $C_\br \subset C_{\br'}$. We have showed that 
\[\rep_A(\bd) = \bigcup_{\substack{\br \leq \bd \\ \br \mbox{ \footnotesize as in part \normalsize } (3)}} C_{\br}.\]
We are left to show that there are no containments between the irreducibles above. Take $C_\br,C_{\br'}$ two irreducibles from the union above, and assume that $C_\br \subset C_{\br'}$. Clearly, we must have $\br \leq \br'$, and so $\bs'\leq \bs$. Assume that there is a vertex $x\in Q_0$ such that $\br(x)<\br'(x)$. Since $u_x(\br)> u_x(\br')+l_x \geq l_x$, we must have $v_x(\br)\geq 0$. Then there is a representation $M\in\rep_{Q^{\op{sp}}}(\be)$ such that map $t_{x_t}(M)$ is injective. We conclude as in Lemma \ref{lem:comp} that $M\notin C_{\br'}$, a contradiction.
\end{proof}

\begin{remark}
The generic decomposition is a generalization to quivers with relations of Kac's canonical decomposition \cite{Kac80,Kac82} for quiver representations. It was further studied in \cite{Schofield92,C-BS,DW02,DW11}.  
It is based on the geometric Krull-Schmidt theorem of Crawley-Boevey and Schr{\"o}er \cite[Theorem 1.1]{C-BS} (see also de la Pe\~na's \cite[Lemma~1.3]{delaP}).

For quivers with relations, Babson, Huisgen-Zimmerman, and Thomas have studied generic behavior of representations in irreducible components in \cite{BHZT09}, obtaining the sharpest results for truncated path algebras.
Carroll has given a combinatorial method of producing the generic decomposition for acyclic gentle algebras in \cite{Carroll15}.

In \cite[Theorem~5.6]{BCHZ15}, Bleher, Chinburg, and Huisgen-Zimmermann describe the generic decomposition of each irreducible component $C$ for a radical square zero algebra in terms of the Kac canonical decomposition of the representation variety for the quiver associated to $C$ by splitting all nodes.  We remark here that our methods yield a relative version of this for splitting one node of any algebra with a node.

We recall that an algebra has the dense orbit property in the sense of \cite{CKW}, if each irreducible component of each of its representation varieties has a dense orbit. Since it is enough to check this property on indecomposable irreducible components, an easy consequence of the considerations above is that an algebra $A^x$ has the dense orbit property if and only if $A$ has the dense orbit property.  This implies, for example, that a radical square zero algebra has the dense orbit property if and only if it is already representation finite \cite[Theorem~7.2]{BCHZ15}. 
\end{remark}

We illustrate this in the following example where splitting nodes yields a gentle algebra. 

\begin{example}\label{ex:gendecomp}
Consider the algebra given by quiver
\[
Q=\quad\vcenter{\hbox{\begin{tikzpicture}[point/.style={shape=circle,fill=black,scale=.5pt,outer sep=3pt},>=latex]
  \node (1) at (0,2) {1};
  \node (2) at (2,2) {2};
  \node (3) at (2,1) {3};
  \node (4) at (0,1) {4};
  \path[->]
	(1.20) edge (2.160)
	(2.200) edge (1.-20);
  \path[->]
	(4.20) edge (3.160)
	(3.200) edge (4.-20);
  \path[->]
	(1.-70) edge (4.70)
	(4.110) edge (1.250);
  \path[->]
	(2.-70) edge (3.70)
	(3.110) edge (2.250);
   \end{tikzpicture}}}
\]
with $I \subset \kk Q$ generated by relations such that vertices 1 and 3 are nodes, and additionally all 2-cycles are zero.
Splitting both of the nodes 1 and 3 yields the gentle algebra of \cite[Example~1]{Carroll15}.  Irreducible components for representation varieties of this gentle algebra can be parametrized by maximal rank sequences. Carroll gives a combinatorial method for determining the generic decomposition into string and band representations for each such irreducible component, and thus the remark above gives the generic decomposition for the corresponding irreducible component of the non-gentle algebra in this example.
\end{example}


\section{Main results}\label{sec:mainresults}
Assume $\charac \kk = 0$ throughout this entire section (but see Remark \ref{rem:char}).  Here we apply the initial work above to obtain results about singularities of equivariant closed subvarieties, as well as generators of their defining ideals.

\subsection{Singularities}
We continue with a fixed $x\in Q_0$ which is a node of $A$, and $A^x$ is the algebra obtained by splitting the node $x$ as in Section \ref{sec:nodesplit}. We now prove Theorem \ref{thm:mainsing}.

\begin{theorem}\label{thm:normal}
Let $C$ be a $GL(\bd^x_r)$-stable irreducible closed subvariety of $\rep_{A^x}(\bd^x_r)$, for some $0\leq r \leq \bd(x)$, and consider the irreducible $GL(\bd)$-stable variety $GL(\bd(x))\cdot C \subset \rep_A(\bd)$. Then $C$ is normal (resp. has rational singularities) if and only if  $GL(\bd(x))\cdot C$ is normal (resp. has rational singularities).
\end{theorem}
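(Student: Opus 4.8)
The plan is to interpose, between $C$ and $GL(\bd(x))\cdot C$, the homogeneous fiber bundle $Z := GL(\bd(x))\times_{P_r} C$, and to exploit its two natural maps: the bundle projection $\pi\colon Z \to GL(\bd(x))/P_r$, whose base is the Grassmannian $\Gr(r,\kk^{\bd(x)})$ and whose fibers are copies of $C$; and the collapsing map $\Psi_C\colon Z \to Y := GL(\bd(x))\cdot C$. When $r=r_{x_h}(C)$ this is the proper birational morphism of Proposition \ref{prop:birational}; for general $r$ it is no longer birational, but — and this is what lets us prove the theorem uniformly in $r$ — the properness argument in the proof of Proposition \ref{prop:birational} is insensitive to the rank hypothesis, so $\Psi_C$ is still a proper surjective $GL(\bd)$-morphism. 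Since $\charac\kk=0$ is in force throughout this section, normality and rational singularities are available and behave well under the operations below.

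First I would descend everything to $Z$ using $\pi$. The quotient map $GL(\bd(x)) \to GL(\bd(x))/P_r$ is a Zariski-locally trivial $P_r$-bundle, so $\pi$ is a Zariski-locally trivial fiber bundle with fiber $C$ over a smooth projective base; étale-locally (indeed Zariski-locally) $Z$ is a product of an open subset of the Grassmannian with $C$. As normality and rational singularities are local properties that ascend and descend along smooth morphisms, and as $X$ is normal (resp.\ has rational singularities) if and only if $X\times U$ is for $U$ a smooth variety, it follows that $Z$ is normal (resp.\ has rational singularities) if and only if $C$ is.

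The heart of the argument is to transfer the same equivalence across $\Psi_C$, and the key structural input is an explicit description of its fibers. Fix $N\in Y$. Because $x$ is a node of $A$ we have $t_x(N)\circ h_x(N)=0$, i.e.\ $\im h_x(N)\subseteq \ker t_x(N)$; and a point $[g,M]$ lies over $N$ exactly when $g^{-1}\cdot N\in C$, which, writing $V:=g_x(\kk^r)$, forces $\im h_x(N)\subseteq V\subseteq \ker t_x(N)$. Conversely, any such $V$ yields an $A$-representation $g^{-1}\cdot N$ lying in $\rep_{A^x}(\bd^x_r)$ and $GL(\bd(x))$-conjugate to some $M_0\in C$; by the linear-algebra argument in the proof of Proposition \ref{prop:bijection} it is then already $(GL(r)\times GL(\bd(x)-r))$-conjugate to $M_0$, hence lies in $C$ since $C$ is stable under this Levi. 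Therefore
\[
\Psi_C^{-1}(N)\;\cong\;\{\, V\in\Gr(r,\kk^{\bd(x)}) \;\mid\; \im h_x(N)\subseteq V\subseteq\ker t_x(N)\,\},
\]
a genuine Grassmannian — in particular connected and smooth, with rational singularities and with $H^{i}$ of its structure sheaf vanishing for $i>0$. So $\Psi_C$ is a proper surjective morphism all of whose fibers are connected, have rational singularities, and have no higher cohomology of the structure sheaf. For such a morphism one has $R\Psi_{C*}\cO_Z=\cO_Y$, and both normality and rational singularities transfer in both directions between $Z$ and $Y$: the implication ``$Z$ nice $\Rightarrow Y$ nice'' is the collapsing theorem of Kempf \cite{Kempf76} in the form allowing a non-linear, cohomologically trivial fiber (see also \cite{weymanbook}), while ``$Y$ nice $\Rightarrow Z$ nice'' follows by comparing a resolution of $Z$ with the induced resolution of $Y$ and feeding $R\Psi_{C*}\cO_Z=\cO_Y$ into the projection formula and Grothendieck duality. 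Combining this with the previous paragraph gives the theorem.

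The step I expect to be the main obstacle is exactly this last transfer across $\Psi_C$: establishing $R\Psi_{C*}\cO_Z=\cO_Y$ — equivalently the normality of $Y$ together with $R^{>0}\Psi_{C*}\cO_Z=0$ — without presupposing what is to be proved, and then extracting the ``$Y$ nice $\Rightarrow Z$ nice'' direction from it. This is where the Grassmannian-fiber identification earns its keep: plugged into the theorem on formal functions it kills the higher direct images (using $H^{>0}(\cO)=0$ on Grassmannians), and combined with Stein factorization and the fact that the fibers are reduced and connected it pins down $\Psi_{C*}\cO_Z=\cO_Y$. A secondary, more routine point — already built into the setup above — is to keep the whole argument uniform in $r$ rather than only for $r=r_{x_h}(C)$, which is legitimate precisely because $\Psi_C$ remains proper with Grassmannian fibers for every $r$.
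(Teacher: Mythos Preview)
The first half of your argument --- the equivalence ``$C$ normal (resp.\ rational singularities) $\Leftrightarrow Z:=GL(\bd(x))\times_{P_r}C$ normal (resp.\ rational singularities)'' via Zariski-local triviality of $\pi$ --- is correct and matches what the paper uses implicitly. Your identification of the fibers of $\Psi_C$ as Grassmannians of intermediate subspaces $\im h_x(N)\subseteq V\subseteq\ker t_x(N)$ is also correct and pleasant.

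The gap is entirely in the transfer across $\Psi_C$, exactly where you flagged the obstacle. Knowing that the reduced fibers are Grassmannians does \emph{not} yield $R^{>0}\Psi_{C*}\cO_Z=0$ via the theorem on formal functions: that theorem computes the completion of the stalk as $\varprojlim_n H^i(\cO_{Z_n})$ over the infinitesimal thickenings $Z_n$ of the fiber, not $H^i$ of the fiber itself, and there is no mechanism here to control those thickenings. Similarly, Stein factorization together with connected reduced fibers only shows that $\Spec\Psi_{C*}\cO_Z\to Y$ is finite and bijective; concluding it is an isomorphism already requires $Y$ normal, which is circular. And your converse ``$Y$ nice $\Rightarrow Z$ nice'' from $R\Psi_{C*}\cO_Z=\cO_Y$ plus a comparison of resolutions is not a valid deduction: a proper morphism with $Rf_*\cO=\cO$ can easily have a singular source over a smooth target.

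The paper avoids all of this by two different devices. For ``$C$ nice $\Rightarrow Y$ nice'' it invokes the one structural fact you never use: the unipotent radical of $P_r$ acts \emph{trivially} on $\rep_{A^x}(\bd^x_r)$, hence on $C$, so the $P_r$-action on $C$ is completely reducible. This is precisely Kempf's hypothesis in \cite{Kempf76}, and his Proposition~1 and Theorem~3 then apply directly to the birational collapsing $\Psi_C$ of Proposition~\ref{prop:birational}; no fiberwise cohomology is needed. For ``$Y$ nice $\Rightarrow C$ nice'' the paper bypasses $Z$ altogether: the isomorphism $\kk[Y]^{U^-}\cong\kk[C]$ of \eqref{eq:invariant} exhibits $\kk[C]$ as a direct summand of $\kk[Y]$ in characteristic~$0$, and Boutot's theorem \cite{boutot} transfers rational singularities (and normality) to direct summands. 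Note also that the paper's argument is run in the birational case $r=r_{x_h}(C)$; your attempt to treat all $r$ uniformly is attractive but, as written, does not go through.
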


\begin{proof}
We apply Kempf's results \cite{Kempf76} on collapsing of vector bundles to our setup. Recall that $GL(\bd)$ acts on the affine space $\rep_Q(\bd)$, in which $\rep_{A^x}(\bd^x_r)$ is a closed $P_r$-stable subvariety, with the unipotent radical of $P_r$ acting trivially. In the language of \cite[Section~2]{Kempf76}, $P_r$ acts on $\rep_{A^x}(\bd^x_r)$ completely reducibly.
Hence, $P_r$ acts on $C$ completely reducibly as well. By Proposition \ref{prop:birational}, we have the birational ``collapsing map''
\[\Psi_C\colon GL(\bd(x))\times_{P_r} C \to GL(\bd(x)) \cdot C.\]
One implication on normality now follows from Proposition 1 of \cite{Kempf76}, and on rational singularities from Theorem 3 of \textit{ibid.}. The converse statements follow from the isomorphism (\ref{eq:invariant}) due to the direct summand property \cite{boutot}.
\end{proof}

Splitting successively as in Theorem \ref{thm:comprad}, we obtain the following statement. 

\begin{corollary}\label{cor:rankrat} Consider a radical square zero algebra $A=\kk Q/\kk Q_{\geq 2}$. For any $\br\leq \bd$, the variety $C_\br \subset \rep_A(\bd)$ has rational singularities.
\end{corollary}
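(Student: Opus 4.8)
The plan is to derive Corollary~\ref{cor:rankrat} from Theorem~\ref{thm:comprad} and Theorem~\ref{thm:normal} by the same successive splitting procedure used in the proof of Theorem~\ref{thm:comprad}. First I would recall from that proof that, fixing $\br \leq \bd$ and setting $\bs = \bd - \br$, repeatedly splitting the nodes of $A$ with respect to the ranks prescribed by $\br$ transforms the variety $\rep_A(\bd)$ into the representation variety $\rep_{Q^{\op{sp}}}(\be)$, where $Q^{\op{sp}}$ is the relation-free quiver obtained by splitting every node and $\be$ is given by $\be(x_h) = \br(x)$, $\be(x_t) = \bs(x)$ for $x \in Q_0$. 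Moreover, it was shown there that under the bijections of Proposition~\ref{prop:bijection} (applied recursively), the closed subvariety $C_\br$ corresponds precisely to $\rep_{Q^{\op{sp}}}(\be)$.

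The key observation is then that $\rep_{Q^{\op{sp}}}(\be)$ is an affine space (as $Q^{\op{sp}}$ has no relations), hence smooth, and in particular has rational singularities. Now I would invoke Theorem~\ref{thm:normal} once for each node split in the chain: at each stage we have an irreducible $GL$-stable closed subvariety $C'$ of a representation variety for the ``more-split'' algebra together with its saturation $GL(\bd(y)) \cdot C'$ in the representation variety for the ``less-split'' algebra, and Theorem~\ref{thm:normal} says the former has rational singularities if and only if the latter does. Chaining these equivalences from $\rep_{Q^{\op{sp}}}(\be)$ back up to $C_\br \subseteq \rep_A(\bd)$ shows that $C_\br$ has rational singularities. (One should note the statement does not even require $C_\br$ to be non-empty; if it is empty there is nothing to prove, and otherwise $r_x(C_\br) = \br(x)$ for all $x$ by construction, which is what is needed to apply Proposition~\ref{prop:bijection} and hence Theorem~\ref{thm:normal} at each step.)

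The only genuinely delicate point is bookkeeping: one must be careful that the recursive application of Proposition~\ref{prop:bijection} and Theorem~\ref{thm:normal} is legitimate at every stage, i.e.\ that after splitting some nodes the variety under consideration really is a $GL(\be')$-stable irreducible closed subvariety of the relevant representation variety with the prescribed $x_h$-rank, so that the next split is of the form covered by Theorem~\ref{thm:normal}. This is exactly the content of the inductive setup already carried out in the proof of Theorem~\ref{thm:comprad}, so I would simply cite that argument rather than repeat it. Everything else is a formal chase through the equivalences, so there is no serious obstacle; the proof is genuinely just ``apply Theorem~\ref{thm:normal} repeatedly, starting from the smooth affine space $\rep_{Q^{\op{sp}}}(\be)$.''
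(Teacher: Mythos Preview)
Your proposal is correct and follows essentially the same approach as the paper: the paper simply remarks that the corollary is obtained by ``splitting successively as in Theorem~\ref{thm:comprad}'', i.e.\ by applying Theorem~\ref{thm:normal} at each node-splitting step until one reaches the smooth affine space $\rep_{Q^{\op{sp}}}(\be)$. Your write-up just spells out this one-line justification in more detail.
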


The following example illustrates how to use the theorem to give other examples of algebras where all irreducible components of representation varieties have rational singularities.

\begin{example}\label{ex:singularity}
Consider again the algebra $A$ of Example \ref{ex:irrcomp}.  
We noted there that any representation variety for $A^x$ is the product of an affine space with a union of orbit closures in an equioriented type $\bbA$ quiver representation variety.
Each of these orbit closures, thus each irreducible component of any $\rep_{A^x}(\bd^x_r)$, is known to have rational singularities \cite{ADK81}.
Therefore every irreducible component of any $\rep_A(\bd)$ for this algebra has rational singularities by Theorem \ref{thm:normal}.
\end{example}

When splitting nodes of an algebra $A$ results in an algebra whose orbit closures are known to be normal or have rational singularities (e.g. Dynkin types $\bbA$ \cite{LM98, BZ01} or $\bbD$ \cite{BZ02}), then we can conclude the same for orbit closures of $A$.  See Examples \ref{ex:singularity2} and \ref{ex:singularity3} in the next section.

\subsection{Defining ideals}\label{sec:ideals}
In this section, we describe how the defining equations of irreducible varieties change under node splitting. For this, we start with a preliminary result for a particular class of radical square zero algebras.
We recall the notation used in the introduction before Theorem \ref{thm:maindef}, and the definition for $C_\br$ used in Theorem \ref{thm:comprad}.

\begin{proposition}\label{prop:loopideal}
Consider the algebra $A=\kk Q/\kk Q_{\geq 2}$, where $Q$ is the quiver
\[\begin{tikzcd}[column sep = large, outer sep= - 0.15ex]
1 \arrow[r, "a"] & 2
\arrow[
  out=115,
  in=65,
  loop,
  distance=0.8cm, "c_1"]
\arrow[
  out=125,
  in=55,
  loop,
  distance=1.8cm, "c_2"]
  \arrow[
  out=135,
  in=45,
  loop,
  distance=4.2cm, "c_l", "\vdots"']
  \arrow[r, "b"] & 3
\end{tikzcd}\]
with $l\in\mathbb{Z}_{\geq 0}$. Let $\bd=(m,d,n)$ be a dimension vector, $r$ an integer with $0\leq r \leq \min\{d, \, \lfloor \frac{m+d\cdot l}{l+1}\rfloor\}$, and consider the rank sequence $\br=\left(0,\,r,\,\min\{n,d-r\}\right)$. Then the prime ideal defining $C_\br$ is generated by the following polynomials in $\kk[\rep_{\kk Q}(\bd)]$:
\begin{enumerate}
\item The $(r+1)\times (r+1)$ minors of $H_2$;
\item The $(d-r+1) \times (d-r+1)$ minors of $T_2$;
\item The entries of \, $T_2 \cdot H_2$;
\item The trace of \,$X_{c_i}$, for $i=1,\dots, l$.
\end{enumerate}

\end{proposition}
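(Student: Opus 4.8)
The plan is to identify $C_{\mathbf{r}}$ with a collapsing of a suitable representation variety after splitting the node $2$, reducing everything to the classical description of defining ideals of matrix rank varieties and of products of generic matrices. Here $Q$ has the single node $x = 2$; splitting it gives the quiver $Q^x$ with vertices $1, 2_h, 2_t, 3$, where the arrows $c_1, \dots, c_l$ now go from $2_t$ to $2_h$, the arrow $a$ goes $1 \to 2_h$, and $b$ goes $2_t \to 3$. Since $A$ is radical square zero, $A^x$ is hereditary, so $\rep_{A^x}(\mathbf{d}_r^x)$ is an affine space, namely $\Mat(r, m) \oplus \Mat(r, d-r)^{\oplus l} \oplus \Mat(\min\{n, d-r\}, d-r)$ — wait, more precisely with $\be(2_h) = r$, $\be(2_t) = d - r$, $\be(1) = m$, $\be(3) = n$, this is $\Mat(r,m) \times \Mat(r, d-r)^l \times \Mat(n, d-r)$. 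By Proposition~\ref{prop:bijection} (applied once) combined with Theorem~\ref{thm:comprad}, $C_{\mathbf{r}}$ is the $GL(d)$-saturation $GL(d) \cdot \rep_{A^x}(\mathbf{d}_r^x)$, provided the rank conditions match: one must check $r_{2_h}(\rep_{A^x}(\mathbf{d}_r^x)) = r$ (this needs $r \le \min\{d, \lfloor (m + dl)/(l+1)\rfloor\}$, exactly the stated bound, coming from $u_2(\mathbf{r}) = m + (d-r)l - r \ge 0$), and that $\mathbf{r}$ is the maximal rank sequence below it on the $1$- and $3$-coordinates, giving $\br(3) = \min\{n, d-r\}$.

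**Next I would** describe the ideal of $GL(d) \cdot \rep_{A^x}(\mathbf{d}_r^x)$ directly. Writing a point of $\rep_Q(\bd)$ as $(X_a, X_{c_1}, \dots, X_{c_l}, X_b)$, the orbit $GL(d)\cdot \rep_{A^x}(\mathbf d_r^x)$ consists of those tuples obtainable by conjugating the "standard form" by $g \in GL(d)$; concretely, $M \in C_{\mathbf r}$ iff there is an $r$-dimensional subspace $V \subseteq \kk^d$ containing the images of $X_a$ and of all $X_{c_i}$, on which $X_b$ vanishes (i.e. $V \subseteq \ker X_b$) and such that all $X_{c_i}$ send $V$ into $V$ with the restriction being... actually in $A^x$ the maps $c_i: 2_t \to 2_h$, so after the embedding \eqref{eq:repAxembed} we get $X_{c_i}$ of block form $\left[\begin{smallmatrix} 0 & * \\ 0 & 0\end{smallmatrix}\right]$ w.r.t. the decomposition $\kk^r \oplus \kk^{d-r}$; hence $\trace X_{c_i} = 0$ and $\im X_{c_i} \subseteq \kk^r = V$, $V \subseteq \ker X_{c_i}$. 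Translating: $M \in C_{\mathbf r}$ iff there is an $r$-dimensional $V$ with $\im H_2 \subseteq V \subseteq \ker T_2$ — which forces $\rank H_2 \le r$, $\rank T_2 \le d - r$, and $T_2 H_2 = 0$ — together with $\trace X_{c_i} = 0$. The set-theoretic equality with the zero locus of (1)–(4) is then elementary linear algebra: given the rank and product conditions, one produces $V$ as any $r$-dimensional subspace sandwiched between $\im H_2$ and $\ker T_2$ (using $\dim \im H_2 \le r$ and $\dim \ker T_2 \ge d - (d-r) = r$ and $\im H_2 \subseteq \ker T_2$ from $T_2 H_2 = 0$); the trace conditions then put the loops in the required nilpotent block form after conjugating $V$ to $\kk^r$.

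**The scheme-theoretic (prime) statement is the heart of the matter**, and here I would invoke the known results on ideals of "varieties of complexes." The ideal generated by (1)–(3) is precisely the defining ideal of the variety of complexes $\{(H_2, T_2) : T_2 H_2 = 0,\ \rank H_2 \le r\}$ in the relevant matrix space; by the fundamental results of De Concini–Strickland \cite{DecoStrick} (see also \cite{MT1}), this ideal is prime, and the quotient ring is a normal Cohen–Macaulay domain — in fact this variety $C_{\mathbf r}$ for the quiver $1 \to 2 \to 3$ is exactly the kind of $C_{\mathbf r}$ referenced in Theorem~\ref{thm:comprad} and treated in the cited literature. Adding the loop variables $X_{c_i}$: each $X_{c_i}$ contributes an independent factor, and the only condition is $\trace X_{c_i} = 0$ together with $\im X_{c_i} \subseteq \im' $ ... — actually the cleanest route is: the whole variety $C_{\mathbf r}$ is the collapsing $GL(d) \times_{P_r} \rep_{A^x}(\mathbf d_r^x)$ of an affine space over the Grassmannian, so by Kempf's results (as in the proof of Theorem~\ref{thm:normal}) it is normal with rational singularities, hence in particular the ideal of (1)–(5)... let me restate. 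I would argue: (a) the listed polynomials all vanish on $C_{\mathbf r}$ (direct check from the block form); (b) they cut out $C_{\mathbf r}$ set-theoretically (the linear algebra above); (c) the ideal $J$ they generate is radical — this is where I would either cite the complexes literature directly for the minors/product part and handle the trace part by noting $\kk[\rep_Q(\bd)]/J$ is a polynomial extension (in the "free" entries of the $X_{c_i}$, i.e. after quotienting by traces and the off-block entries that are forced) of the coordinate ring of a variety of complexes, which is known to be a normal domain.

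**The main obstacle** I anticipate is precisely step (c): passing from the set-theoretic description to primeness/radicality of the explicitly listed generators. The minors of $H_2$, minors of $T_2$, and entries of $T_2 H_2$ generate the ideal of a variety of complexes, whose primeness is a nontrivial classical theorem (De Concini–Strickland, Musili–Seshadri style standard monomial theory, or the ASL methods); I would cite this rather than reprove it. The genuinely new bookkeeping is checking that adjoining the loop variables $X_{c_i}$ — constrained only by $\trace X_{c_i} = 0$ in the listed generators, but geometrically also by the block-triangular form — does not enlarge the zero locus beyond $C_{\mathbf r}$ and does not spoil primeness. The resolution is that once $\rank H_2 \le r$, $\rank T_2 \le d-r$, $T_2 H_2 = 0$ hold, a choice of $r$-dimensional $V$ with $\im H_2 \subseteq V \subseteq \ker T_2$ can be made and, after conjugating $V$ to $\kk^r$, any $X_{c_i}$ with $\trace X_{c_i}=0$ and ... hmm — actually one needs $\im X_{c_i} \subseteq V$ and $V \subseteq \ker X_{c_i}$ as well, so the naive generating set (1)–(4) might be incomplete unless the rank bound on $r$ forces these automatically; resolving whether the trace conditions alone suffice (as the proposition asserts), or whether it is a consequence of $r$ being maximal so that generic points of each component already have the loops supported on $V$, is the subtle point I would need to pin down, most likely by showing $C_{\mathbf r}$ is the collapsing of the affine bundle over $\Gr(r, \kk^d)$ whose fiber over $V$ is $\Hom(\kk^d, V) \oplus \Hom(V, V)^{... }$ — no — $\Hom(\kk^m, V) \oplus \Hom(\kk^d/V, V)^{?}$... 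I would work out the correct fiber from \eqref{eq:repAxembed} and then Kempf's theorems give normality and a Cohen–Macaulay resolution, from which radicality of $J$ follows once $J$ is shown to be the full ideal of functions vanishing on the collapsing — the last verification being a local computation at a generic point using that $\Psi_C$ is birational.
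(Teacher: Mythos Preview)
Your set-theoretic analysis is almost there, but you tie yourself in knots at the end unnecessarily: since the loop matrices $X_{c_i}$ appear as columns of $H_2$ \emph{and} as rows of $T_2$, the conditions (1)--(3) already force $\im X_{c_i}\subseteq \im H_2\subseteq V\subseteq \ker T_2\subseteq \ker X_{c_i}$, so after conjugating $V$ to $\kk^r$ each $X_{c_i}$ automatically has the block form $\left[\begin{smallmatrix}0&*\\0&0\end{smallmatrix}\right]$. In particular the trace condition (4) is implied \emph{set-theoretically} by (1)--(3); its role is purely ideal-theoretic (already for the one-loop quiver with $d=2$, $r=1$, the element $\trace X$ lies in the radical of the ideal generated by (1)--(3) but not in the ideal itself).

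The genuine gap is your plan to cite De Concini--Strickland for primeness of the ideal generated by (1)--(3). That result treats the variety of complexes $\{(A,B): BA=0,\ \rank A\le r,\ \rank B\le s\}$ where $A$ and $B$ are matrices of \emph{independent} variables. Here, however, $H_2=[X_a\mid X_{c_1}\mid\cdots\mid X_{c_l}]$ and $T_2=\left[\begin{smallmatrix}X_{c_1}\\\vdots\\X_{c_l}\\X_b\end{smallmatrix}\right]$ share all of the loop variables, so when $l>0$ this is \emph{not} a variety of complexes in any disguise and the classical primeness theorems do not apply. Nor can you separate out the loops as an independent polynomial extension, for the same reason. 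This is precisely why the paper's proof goes a completely different route: it runs the Kempf--Weyman geometric technique, pushing forward a Koszul resolution through an intermediate bundle $\cY_1$ on the Grassmannian, and uses Bott's theorem and Littlewood--Richardson combinatorics to show that only three cohomological contributions survive, corresponding exactly to the equation types (2), (3)--(4), and (1). The intermediate-bundle trick and the surjectivity argument for the map $h$ at the end are doing real work that your outline has no substitute for.
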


\begin{proof}
We first note that the condition on $r$ means that $C_\br$ is a non-empty irreducible subvariety by Theorem \ref{thm:comprad}. Let us write $V_1 = \kk^m, \, W = \kk^d, \, V_2 = \kk^n$, so that we have a natural identification 
\[\rep_{\kk Q}(\bd) = V_1^* \oo W \, \oplus \,  W^* \oo V_2 \, \oplus \bigoplus_{i=1}^l \, W^* \oo W.\]
Denoting by $x$ the middle node $2$, we arrive by splitting $x$ to a resolution of singularities of $C_\br$ as in Proposition \ref{prop:birational}
\begin{equation}\label{eq:bundle}
GL(d) \times_{P_r} \rep_{\kk Q^x} (\bd^x_r) \,\, \longrightarrow \, C_\br.
\end{equation}
On $\Gr(r,d)$ we have the exact sequence of bundles 
\begin{equation}\label{eq:taut}
0\to \cR \to W \to \cQ \to 0,
\end{equation}
where $\cR$ (resp. $\cQ$) denotes the tautological subbundle (resp. quotient bundle) of rank $r$ (resp. $d-r$), and we write $W$ for the trivial bundle $\Gr(r,d) \times W$, for simplicity.
With this notation, the space $GL(d) \times_{P_r} \rep_{\kk Q^x} (\bd^x_r)$ in (\ref{eq:bundle}) corresponds to the bundle
\[ \cZ \, = \, V_1^* \oo \cR  \, \oplus \, \cQ^* \oo V_2  \, \oplus \bigoplus_{k=1}^l \cQ^* \oo \cR.\]
The idea now is to consider the intermediate bundle
\[\cY_1 \, = \, V_1^* \oo \cR  \, \oplus \, W^* \oo V_2  \, \oplus \bigoplus_{k=1}^l W^* \oo \cR.\]
Clearly, we have inclusions of bundles $\cZ  \subset \cY_1 \subset \Gr(r,d) \times \rep_{\kk Q}(\bd)$, and so a diagram
\[\xymatrix{
\cZ \ar[r]^{i} \ar[dr] & \cY_1 \ar[r]^{\hspace{-0.6in} i_1} \ar[d]^{p_1} & \Gr(r,d) \times \rep_{\kk Q}(\bd) \ar[dl]^{p} \ar[d]^q \\
  &   \Gr(r,d) & \rep_{\kk Q}(\bd)
}\]
Let $\xi$ (resp. $\xi_1$) be the dual of the quotient bundle $\cY_1/\cZ$ (resp. $(\Gr(r,d)\times\rep_{\kk Q}(\bd))/\cY_1$) on $\Gr(r,d)$, namely
\[ \xi= \cR \oo V_2^* \oplus \bigoplus_{k=1}^l \cR\oo \cR^*, \quad \mbox{and } \, \xi_1= V_1 \oo \cQ^* \oplus \bigoplus_{k=1}^l W\oo \cQ^*.\]
We have the following Koszul complex on $\cY_1$ (cf. \cite[Proposition 5.1.1]{weymanbook})
\[ K_\bullet \colon \, 0 \to \!\!\bigwedge^{r(n+rl)} \!\! p_1^*(\xi)  \to \dots  \to \bigwedge^i p_1^*(\xi)  \to \dots \to p_1^*(\xi) \to \cO_{\cY_1}\to i_* \cO_{\cZ} \to 0.\]
Put $S=\kk[\rep_{\kk Q}(\bd)]$ and denote by $J_1\subset S$ the ideal generated by the equations of type (1), i.e. $(r+1)\times (r+1)$ minors of $H_2$. Note that $\cY_1$ is the desingularization of $\op{Spec}(S/J_1)$ via $q \circ i_1$ (cf. \cite[Section 6.1]{weymanbook}). Let $I$ denote the defining ideal of $C_\br$. Since $\op{Spec}(S/J_1)$ (resp. $C_\br$) has rational singularities, we have $\bR^\bullet q_* i_{1*} \cO_{\cY_1} \cong S/J_1$ (resp. $\bR^\bullet q_* i_{1*} i_* \cO_{\cZ} \cong S/I$). 

Now we analyze the other terms of the complex $K_\bullet$. We claim that we have
\begin{equation}\label{eq:claim}
H^k(\cY_1, \bigwedge^t p_1^*(\xi)) = 0, \mbox{ for all } k \geq t-1\geq 0 \mbox{ unless } (k,t) \in \{ (0,1) ,\, (d-r, d-r+1) \}.
\end{equation}
To show the claim, we consider the natural isomorphisms (cf. \cite[Proposition 5.1.1(b)]{weymanbook})
\[ \bR^k q_*i_{1*} (\bigwedge^t p_1^*(\xi)) \cong  H^k(\cY_1, \bigwedge^t p_1^*(\xi)) \cong H^k(\Gr(r,d), \Sym \eta \oo \bigwedge^t \xi), \]
where $\eta= \cY_1^*$. Through the Cauchy formulas \cite[Corollary 2.3.3]{weymanbook}, $\Sym \eta \oo \bigwedge^t \xi$ can be decomposed as a direct sum of bundles of the form
\begin{equation}\label{eq:summand}
 \SSS_\zl V_1 \oo \SSS_\mu W \oo \SSS_\nu V_2^* \oo \SSS_\za \cR \oo \SSS_\zb \cR^*, \,\mbox{for some partitions } \zl,\mu,\nu,\za,\zb \mbox{ with } |\za| = t.
 \end{equation}
Here $|\za|$ denotes the size of the partition $\za$, and $\SSS_\za (-)$ denotes the Schur functor (see \cite[Section 2]{weymanbook}). Hence, for the vanishing in (\ref{eq:claim}), it is enough to show the corresponding vanishing of $H^k(\Gr(r,d), \SSS_\za \cR \oo \SSS_\zb \cR^*)$, with $|\za|=t$. 

Assume that $H^k(\Gr(r,d), \SSS_\za \cR \oo \SSS_\zb \cR^*)\neq 0$, for some $k>0$ and $|\za|\leq k+1$. Then by the Littlewood--Richardson rule and Bott's theorem (see \cite[Theorem 2.3.4 and Corollary 4.1.9]{weymanbook}), we see by inspection that we must have $\za=(d-r+1)$, so that $t=d-r+1$ and $k=d-r$. This proves the claim (\ref{eq:claim}).

Let $f$ denote the map $f: H^0(\cY_1, p_1^*(\xi)) \to S/J_1$ induced from the beginning of $K_\bullet$. Pushing forward the complex $K_\bullet$ via $q\circ i_{1}$, we get using (\ref{eq:claim}) via a standard spectral sequence argument an exact sequence of $S$-modules
\[ H^{d-r}(\cY_1, \! \bigwedge^{d-r+1} \!\! p_1^*(\xi))  \xrightarrow{\,\,\,g\,\,\,} (S/J_1)/(\op{im} f) \to S/I \to 0.\]
To finish the proof, we show that $H^0(\cY_1, p_1^*(\xi))$ contributes with the equations of type (3), (4) via $f$, while $H^{d-r}(\cY_1, \bigwedge^{d-r+1} p_1^*(\xi))$ contributes with the equations of type (2) via $g$.

By \cite[Basic Theorem 5.1.2]{weymanbook}, we have a surjective map of $S$-modules
\[\bigoplus_{k\geq 0} H^k(\Gr(r,d), \xi \oo \bigwedge^k \xi_1) \oo S(-k) \, \longrightarrow \, H^0(\cY_1, p_1^*(\xi)).\]
Using Bott's theorem as before, we see that $H^k(\Gr(r,d), \xi \oo \bigwedge^k \xi_1)\neq 0$ if and only if $k\in \{0,1\}$. When $k=0$ we get a $GL(d)$-invariant element in degree $1$ for each loop, which must be sent to the equations of type (4) via $f$. When $k=1$, we obtain a $GL(d)$-representation in degree $2$ for each pair of composable arrows, which must be sent to the equations of type (3) via $g$.

Now we analyize $H^{d-r}(\cY_1, \bigwedge^{d-r+1} p_1^*(\xi))$ by introducing another intermediate bundle 
\[\cY_2 = V_1^* \oo W \oplus \cQ^*\oo V_2 \oplus \bigoplus_{k=1}^l \cQ^* \oo W.\]
Again, we have inclusions of bundles $\cZ  \xrightarrow{j} \cY_2 \xrightarrow{i_2} \rep_{\kk Q}(\bd)$, and $\cY_2$ is a desingularization of $\op{Spec} (S/J_2)$ via $q \circ i_2$, where $J_2$ is the ideal generated by the minors of type (2). Let $\xi_2$ denote the dual bundle of the quotient $\rep_{\kk Q}(\bd)/\cY_2$, that is
\[\xi_2 = \cR \oo V_2^* \oplus \bigoplus_{k=1}^l \cR \oo W^*.\]
We have a Koszul complex on $\Gr(r,d)\times \rep_{\kk Q}(\bd)$ of the form 
\[L_\bullet \colon \, 0 \to \! \bigwedge^{r(l+n)} \! p^*(\xi_2)  \to \dots  \to \bigwedge^i p^*(\xi_2)  \to \dots \to p^*(\xi_2) \to \cO_{\Gr(r,d)\times \rep_{\kk Q}(\bd)}\to i_{2*} \cO_{\cY_2} \to 0.\]
The inclusions of bundles $i,j,i_1,i_2$ induce a map of complexes $L_\bullet \to i_{1*} K_\bullet$. The claim analogous to (\ref{eq:claim}) holds for $L_\bullet$ as well. Note that $H^0(\Gr(r,d), \xi_2)=0$. Hence, the map of complexes induces via spectral sequence a commutative diagram
\[\xymatrix{
H^{d-r}(\Gr(r,d)\times \rep_{\kk Q}(\bd), \!\! \displaystyle\bigwedge^{d-r+1} \!\! p^*(\xi_2))  \ar[r] \ar[d]^h & S \ar[r] \ar[d] & S/J_2 \ar[d] \ar[r] & 0 \\
H^{d-r}(\cY_1, \displaystyle\bigwedge^{d-r+1} p_1^*(\xi))  \ar[r] &  (S/J_1)/(\op{im} f) \ar[r] & S/I \ar[r] & 0
}\]
with exact rows. As $J_2$ is generated by the equations of type (2), it is enough to show that $h$ is surjective. We have seen by decomposing into summands that $H^{d-r}(\cY_1, \bigwedge^{d-r+1} p_1^*(\xi))$ yields non-zero cohomology only for summands as in (\ref{eq:summand}) with $\za = (d-r+1)$. By decomposing in an analogous way, we see that the contributing summands to $H^{d-r}(\Gr(r,d)\times \rep_{\kk Q}(\bd),  \bigwedge^{d-r+1} p^*(\xi_2))$ are of the form 
\begin{equation}\label{eq:summand2}
 \SSS_\zl V_1 \oo \SSS_\mu W \oo \SSS_\nu V_2^* \oo \SSS_{(d-r+1)} \cR \oo \SSS_\zb W^*, \,\,\mbox{for some partitions } \zl,\mu,\nu,\zb.
 \end{equation}
The map $h$ descends to a map on $(d-r)$th cohomology from the summands of type  (\ref{eq:summand2}) to their corresponding summands of type (\ref{eq:summand}) (with $\za=(d-r+1)$). To show surjectivity of $h$, we are left to show that the map $ \SSS_{(d-r+1)} \cR \oo \SSS_{\zb} W^* \to  \SSS_{(d-r+1)} \cR \oo \SSS_{\zb} \cR^*$ induces a surjective map on $(d-r)$th cohomology. By Bott's theorem we have
\[H^{d-r}(\Gr(d,n), \, \SSS_{(d-r+1)} \cR \oo \SSS_{\zb} \cR^* ) = \SSS_{(1^{d-r+1}, -\beta)} W,\] 
where the latter term is zero if $\beta$ has $r$ parts. Consider the exact sequence of bundles
\[ 0 \to \SSS_{d-r+1} \cR \, \oo \cK \to \SSS_{(d-r+1)} \cR \oo \SSS_{\zb} W^*  \to \SSS_{(d-r+1)} \cR \oo \SSS_{\zb} \cR^* \to 0.\]
We show that $H^{d-r+1}(\Gr(d,k), \SSS_{(d-r+1)}\cR \, \oo \cK)$ has no summand $\SSS_{(1^{d-r+1}, -\beta)}W$, thus proving the claim on surjectivity. The tautological sequence (\ref{eq:taut}) gives rise to a filtration of $\SSS_\beta W^*$ with composition factors that are direct summands of $\SSS_{\beta} (\cQ^* \oplus \cR^*)$. Hence, by the corresponding Littlewood--Richardson rule (see \cite[Proposition 2.3.1]{weymanbook}) $\cK$ has a filtration with associated graded a direct sum of terms $\SSS_{\beta_1} \cQ^* \oo  \SSS_{\beta_2} \cR^*$, where $\beta_1,\beta_2$ are partitions with $\beta_1 \neq 0$. By Bott's theorem again, we see by inspection that since $\beta_1\neq 0$, the cohomology groups of the bundle $\SSS_{\beta_1} \cQ^* \oo \SSS_{(d-r+1)} \cR \oo  \SSS_{\beta_2} \cR^*$ cannot have representations corresponding to partitions with strictly positive $(d-r+1)$th part. Since the cohomology of the associated graded has no summand $\SSS_{(1^{d-r+1}, -\beta)}W$, neither does $H^{d-r+1}(\Gr(d,k), \SSS_{(d-r+1)} \cR \, \oo \cK)$. Thus $h$ is onto, finishing the proof of the lemma.
\end{proof}

\begin{remark}
The equations given in Proposition \ref{prop:loopideal} do not form a minimal set of generators in general. In the case $l=0$, the equations are minimal except some degenerate cases (cf. \cite[Theorem 4.2]{lor2}). But in the case $l>0$, if $2r\leq d$ (resp. if $2r>d$) then the minors of $T_2$ (resp. $H_2$) that don't involve rows from $X_b$ (resp. columns from $X_a$) can be obtained by Laplace expansions from the minors of $H_2$ (resp. $T_2$). When the quiver is one loop, minimal equations are given in \cite[Example 5.6]{WeyNilp}.
\end{remark}

Let $Q$ be a quiver and $\bd$ a dimension vector. Let $C$ be a $GL(\bd^x_r)$-stable closed subvariety of $\rep_{\kk Q^x}(\bd^x_r)$ (for some $0\leq r \leq \bd(x)$). Denote by $U^-$ the transpose of the unipotent radical of $P_r$. We have a natural map $\kk[GL(\bd(x)) \cdot C] \to \kk[C]$, that descends onto an isomorphism (e.g. see \cite[Proposition 3.14]{collapsechar})
\begin{equation}\label{eq:invariant}
\kk[GL(\bd(x)) \cdot C]^{U^-} \, \xrightarrow{\,\,\cong\,\,} \, \kk[C].
\end{equation}
As a consequence, the algebra $\kk[C]$ is a direct summand of $\kk[GL(\bd(x)) \cdot C]$ as a $\kk[C]$-module. In particular, when $C=\rep_{\kk Q^x}(\bd^x_r)$ we can identify polynomials in $\kk[\rep_{\kk Q^x}(\bd^x_r)]$ naturally inside $\kk[\rep_{\kk Q}(\bd)]$. With this, we formulate the following result.

\begin{theorem}\label{thm:defideal}
Let $C$ be a $GL(\bd^x_r)$-stable closed subvariety of $\rep_{\kk Q^x}(\bd^x_r)$, for some $0\leq r \leq \bd(x)$. Let $\cP$ be a set of polynomials in $\kk[\rep_{\kk Q^x}(\bd^x_r)]$ that generate the radical ideal defining $C$. Then the radical ideal defining $GL(\bd(x))\cdot C$ is generated by the following polynomials in $\kk[\rep_{\kk Q}(\bd)]$:
\begin{enumerate}
\item The $(r+1)\times (r+1)$ minors of $H_x$;
\item The $(\bd(x)-r+1) \times (\bd(x)-r+1)$ minors of $T_x$;
\item The entries of \, $T_x \cdot H_x$;
\item The trace of \,$X_{\gamma}$, for every loop $\gamma\in Q_1$ at $x$.
\item A basis of $\,\op{span}_{\kk} \, GL(\bd(x)) \cdot \cP$.
\end{enumerate}
\end{theorem}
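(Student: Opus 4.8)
The plan is to reduce the general statement to the special case of Proposition~\ref{prop:loopideal} by a "relative desingularization" argument analogous to the one there, but now carrying an arbitrary closed subvariety $C$ along through the collapsing map. First I would set $S = \kk[\rep_{\kk Q}(\bd)]$ and let $J \subseteq S$ be the ideal defining the variety $GL(\bd(x))\cdot \rep_{\kk Q^x}(\bd^x_r)$, which is exactly the ideal generated by the polynomials in (1)--(4); this is the content of Proposition~\ref{prop:loopideal} applied to the ``sub-quiver at $x$'' (only the arrows incident to $x$ matter for these equations, and the arrows not touching $x$ contribute free polynomial variables that ride along trivially). Concretely: by Proposition~\ref{prop:xrbundle} and Proposition~\ref{prop:birational}, $GL(\bd(x))\cdot \rep_{\kk Q^x}(\bd^x_r)$ is the image of the collapsing of the bundle $\cZ$ built from the tautological sub/quotient bundles on $\Gr(r,\kk^{\bd(x)})$ exactly as in the proof of Proposition~\ref{prop:loopideal}; the Weyman-geometric-technique computation there shows $S/J$ has rational singularities and that $J = (\text{(1)},\text{(2)},\text{(3)},\text{(4)})$. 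So the first substantive step is to observe that Proposition~\ref{prop:loopideal} already computes the defining ideal of the ambient variety $V := GL(\bd(x))\cdot \rep_{\kk Q^x}(\bd^x_r)$, and that $GL(\bd(x))\cdot C \subseteq V$.

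Next I would use the isomorphism~\eqref{eq:invariant}: $\kk[GL(\bd^x_r)\cdot C]^{U^-} \cong \kk[C]$, and its ambient version $\kk[V]^{U^-} \cong \kk[\rep_{\kk Q^x}(\bd^x_r)]$. Under these, the quotient map $\kk[V] \twoheadrightarrow \kk[GL(\bd(x))\cdot C]$ corresponds on $U^-$-invariants to $\kk[\rep_{\kk Q^x}(\bd^x_r)] \twoheadrightarrow \kk[C]$, whose kernel is (the radical of) the ideal generated by $\cP$. So $GL(\bd(x))\cdot C$ is cut out inside $V$ by the $GL(\bd(x))$-saturation of $\cP$: one inclusion is clear since $GL(\bd(x))\cdot C$ is $GL(\bd(x))$-stable and contains $C$, which is the vanishing locus of $\cP$ inside $\rep_{\kk Q^x}(\bd^x_r)\subseteq V$; the reverse inclusion follows because a $GL(\bd^x_r)$-invariant function on $V$ vanishing on $C$ must, after $U^-$-averaging, vanish on $\rep_{\kk Q^x}(\bd^x_r)\cap(\text{zero locus of }\cP) = C$. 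This identifies the radical ideal defining $GL(\bd(x))\cdot C$ as $\rad\bigl(J + (GL(\bd(x))\cdot\cP)\bigr)$. Finally, since the span of $GL(\bd(x))\cdot\cP$ is a finite-dimensional $GL(\bd(x))$-subrepresentation of $S$, any basis of it generates the same ideal as the whole orbit, giving exactly the generating set in (1)--(5).

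The main obstacle I expect is making the ``saturation inside $V$'' argument precise at the level of ideals rather than varieties, i.e. verifying that no extra equations (beyond $J$ together with $GL(\bd(x))\cdot\cP$) are needed to define $GL(\bd(x))\cdot C$ as a \emph{scheme-theoretic} radical ideal. Two points need care: (i) the isomorphism~\eqref{eq:invariant} is stated for the \emph{coordinate rings}, so to transfer it to ideals I must know the functor of $U^-$-invariants is exact on the relevant modules --- this is where the direct-summand property (Boutot/\cite{boutot}, or rather the splitting $\kk[C]\hookrightarrow\kk[GL(\bd^x_r)\cdot C]$ recorded just before the theorem) enters, ensuring $(-)^{U^-}$ applied to the surjection $\kk[V]\twoheadrightarrow\kk[GL(\bd(x))\cdot C]$ stays surjective with the expected kernel; and (ii) I must check that $J$, being radical and prime-decomposed with $V$ irreducible, behaves well under adding the saturated ideal, so that $\rad(J+(GL(\bd(x))\cdot\cP))$ is genuinely the vanishing ideal of $GL(\bd(x))\cdot C$ and not something larger. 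Once exactness of $(-)^{U^-}$ on these modules is in hand, the rest is bookkeeping, and the statement follows. (The polynomials in (1)--(4) being the generators of $J$ is not re-proved here --- it is Proposition~\ref{prop:loopideal} applied locally at $x$, which is legitimate since, as noted in the paragraph before Theorem~\ref{thm:maindef}, the arrows not incident to $x$ and the relations of $A$ versus $\kk Q$ play no role in the desingularization geometry that produces these four families.)
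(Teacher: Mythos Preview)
Your approach is essentially the same as the paper's. The paper's proof consists of exactly the two steps you outline: (i) the case $C=\rep_{\kk Q^x}(\bd^x_r)$ follows from Proposition~\ref{prop:loopideal}, giving the ideal $J$ generated by (1)--(4); (ii) the passage to arbitrary $C$ is the ``relative'' statement, which the paper simply attributes to the companion paper \cite{collapsechar} rather than arguing it directly.

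Where you differ is that you attempt to sketch step (ii) inline via the isomorphism~\eqref{eq:invariant} and exactness of $(-)^{U^-}$, whereas the paper treats this as a black box. Your sketch has the right architecture, but the obstacle you flag is real and is precisely the content of the cited result: you need that a $GL(\bd(x))$-stable radical ideal $K\subseteq \kk[V]$ is \emph{generated as an ideal} by the $GL(\bd(x))$-saturation of generators of $K^{U^-}$, not merely that the corresponding zero loci agree. Your sentence ``This identifies the radical ideal defining $GL(\bd(x))\cdot C$ as $\rad\bigl(J + (GL(\bd(x))\cdot\cP)\bigr)$'' only establishes the set-theoretic equality; the theorem asserts that $J+(GL(\bd(x))\cdot\cP)$ is \emph{already} this radical ideal, and closing that gap is exactly what the argument in \cite{collapsechar} supplies. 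So your plan is correct and matches the paper, but the point you mark as ``the main obstacle'' is not bookkeeping---it is the substance of the relative step, and you should either prove it (using, in characteristic~$0$, the semisimplicity of $GL(\bd(x))$-modules to recover $K$ from $K^{U^-}$ isotypic-component by isotypic-component) or cite it as the paper does.
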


\begin{proof}
For $C=\rep_{\kk Q^x}(\bd^x_r)$ the statement follows from Proposition \ref{prop:loopideal}. The relative result is now a consequence of \cite[Theorem 3.14]{collapsechar}.
\end{proof}

As saturation of minors giving the required rank conditions are again minors of such type, we obtain the following result.

\begin{corollary}\label{cor:radef}
Consider a radical square zero algebra $A=\kk Q/\kk Q_{\geq 2}$ and let $C_\br \subset \rep_A(\bd)$ be non-empty. Then the prime ideal of $C_\br$ is generated by the following polynomials in $\kk[\rep_{\kk Q}(\bd)]$, as $x$ runs through all the vertices in $Q_0$:
\begin{enumerate}
\item The $(\br(x)+1)\times (\br(x)+1)$ minors of $H_x$;
\item The $(\bd(x)-\br(x)+1) \times (\bd(x)-\br(x)+1)$ minors of $T_x$;
\item The entries of \, $T_x \cdot H_x$;
\item The trace of \,$X_{\gamma}$, for every loop $\gamma\in Q_1$ at $x$.
\end{enumerate}
\end{corollary}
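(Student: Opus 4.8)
The plan is to iterate Theorem~\ref{thm:defideal} along the node-splitting procedure used in the proof of Theorem~\ref{thm:comprad}. Since $A=\kk Q/\rad^2(\kk Q)$ is radical square zero, every vertex of $Q$ is a node and splitting all of them produces the quiver $Q^{\op{sp}}$, which is bipartite with sources $x_t$ and sinks $x_h$ and carries no relations; hence each $\rep_{Q^{\op{sp}}}(\be)$ is an affine space with zero defining ideal. Fix an enumeration $y_1,\dots,y_k$ of $Q_0$ and un-split the vertices one at a time, obtaining a chain of quivers $Q^{\op{sp}}=Q^{(0)},Q^{(1)},\dots,Q^{(k)}=Q$ (so $Q^{(j)}$ has $y_1,\dots,y_j$ merged and the rest still split) with dimension vectors $\be=\bd^{(0)},\dots,\bd^{(k)}=\bd$ determined by $\be(x_h)=\br(x)$, $\be(x_t)=\bd(x)-\br(x)$. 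By the proof of Theorem~\ref{thm:comprad}, using that $C_\br$ is non-empty, the varieties $C^{(0)}=\rep_{Q^{(0)}}(\bd^{(0)})$ and $C^{(j)}:=GL(\bd(y_j))\cdot C^{(j-1)}$ satisfy $C^{(k)}=C_\br$; moreover, at each step $y_j$ is a node of the algebra whose representation variety is $\rep_{Q^{(j-1)}}(\bd^{(j-1)})$ and the relevant $x_h$-rank of $C^{(j-1)}$ equals $\br(y_j)$, so Proposition~\ref{prop:bijection} and Theorem~\ref{thm:defideal} apply at every step.

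The claim I would prove by induction on $j$ is that the defining ideal of $C^{(j)}$ in $\kk[\rep_{Q^{(j)}}(\bd^{(j)})]$ is generated by the polynomials of types (1)--(4) in the statement attached to the vertices $y_1,\dots,y_j$, formed from the matrices $H_\bullet,T_\bullet$ of $Q^{(j)}$ with dimension vector $\bd^{(j)}$. The case $j=0$ is the affine space with the zero ideal. For the inductive step, apply Theorem~\ref{thm:defideal} with $x=y_j$, taking $\cP$ to be the generating set supplied by the inductive hypothesis; its output consists of the types (1)--(4) equations for $y_j$ built from $Q^{(j)}$ --- precisely the new batch we want to adjoin --- together with a basis of $\op{span}_\kk\,GL(\bd(y_j))\cdot\cP$. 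Since the ideal generated by this span equals the $GL(\bd(y_j))$-saturation of the ideal generated by $\cP$, the inductive step comes down to the identity
\[
\bigl\langle\,\text{types (1)--(4) for }y_1,\dots,y_j\text{ from }Q^{(j)}\,\bigr\rangle \;=\; \bigl\langle\,\text{types (1)--(4) for }y_j\,\bigr\rangle + \bigl(GL(\bd(y_j))\text{-saturation of }\langle\cP\rangle\bigr),
\]
which is the observation recorded in the Remark preceding the Corollary.

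To establish that identity I would analyze how the matrices behave under the embedding $\rep_{Q^{(j-1)}}(\bd^{(j-1)})\hookrightarrow\rep_{Q^{(j)}}(\bd^{(j)})$ of \eqref{eq:repAxembed}: for $i<j$, the matrix $H_{y_i}$ (resp.\ $T_{y_i}$) of $Q^{(j-1)}$ is the one of $Q^{(j)}$ with the columns (resp.\ rows) belonging to the $x_h$-block of $y_j$ inside any $y_j\to y_i$-block (resp.\ $y_i\to y_j$-block) deleted; the trace of a loop at $y_i\neq y_j$ is unchanged and $GL(\bd(y_j))$-invariant; and $T_{y_i}H_{y_i}$ transforms compatibly. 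Since $GL(\bd(y_j))$ acts on $\rep_{Q^{(j)}}(\bd^{(j)})$ by invertible row/column operations on precisely these $y_j$-adjacent blocks, the relevant point is the standard fact that the $GL$-saturation of the ideal of size-$t$ minors of a submatrix of a generic matrix (the group acting so as to fill in the deleted rows/columns) is the ideal of size-$t$ minors of the full matrix, because both are the reduced ideals of the corresponding bounded-rank loci, which are preserved and completed under the base change; an analogous statement handles the entries of a product of two generic matrices, giving the type (3) equations. The type (1)--(2) equations for $y_j$ are needed precisely for minors that involve too many of the deleted columns/rows: such a minor vanishes once the rank of the corresponding $y_j$-block is constrained, and a Laplace expansion along that block rewrites it in terms of oversized minors of the block, hence in terms of the size-$(\bd(y_j)-\br(y_j)+1)$ and size-$(\br(y_j)+1)$ minors already present. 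Putting these together yields the displayed identity and completes the induction; at $j=k$ we obtain the Corollary, the ideal being prime because $C_\br$ is irreducible by Theorem~\ref{thm:comprad}.

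The step I expect to be the main obstacle is this last one: the careful determinant bookkeeping behind the Remark, namely verifying that $GL(\bd(y_j))$-saturation neither discards any of the determinantal or trace relations coming from $\cP$ nor produces generators outside the prescribed types (1)--(4), uniformly over all configurations of arrows between $y_j$ and the previously merged vertices (including loops and multiple parallel arrows). It should also be noted that the final list of equations is intrinsic to $Q$ and $\bd$, so the chosen enumeration of $Q_0$ is immaterial.
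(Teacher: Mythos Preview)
Your proposal is correct and follows essentially the same approach as the paper: iterate Theorem~\ref{thm:defideal} along a sequence of node un-splittings starting from the affine space $\rep_{Q^{\op{sp}}}(\be)$, using at each step that the $GL(\bd(y_j))$-saturation of determinantal minors encoding rank conditions is again a collection of minors of the same type in the enlarged matrices. The paper compresses this into a single sentence (``saturation of minors giving the required rank conditions are again minors of such type''), whereas you have spelled out the induction and the Laplace-expansion bookkeeping; note that the latter is not strictly needed, since once you know the right-hand side of your displayed identity is the prime ideal (by Theorem~\ref{thm:defideal}) and that the left-hand side vanishes on $C^{(j)}$ and contains the saturation of $\cP$, equality follows immediately.
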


The following examples illustrate how to use these results.

\begin{example}\label{ex:singularity2}
Consider the radical square zero algebra $A=\kk Q/\kk Q_{\geq 2}$, where
\vspace{2ex}
\[Q= \quad \xymatrix{
\bullet \ar[r] & \bullet \ar[r] \ar@(ul,ur) & \bullet \ar[r] \ar@(ul,ur)& \dots \ar[r] & \bullet \ar[r] \ar@(ul,ur)& \bullet
}.\]
Splitting the nodes we arrive at the hereditary algebra $A^{\op{sp}}$ of  the type $\bbA$ quiver
\[Q^{\op{sp}} = \quad \xymatrix{
\bullet \ar[r] & \bullet & \bullet \ar[r] \ar[l]& \bullet & \bullet \ar[r] \ar[l]& \dots & \bullet \ar[l] \ar[r] & \bullet 
}.\]
Orbit closures of representations for type $\bbA$ quivers have rational singularities by \cite{BZ01}, 
so combining Corollary \ref{cor:orbclos} with Theorem \ref{thm:normal} shows that all orbit closures for $A$ have rational singularities as well.
Furthermore, defining equations for orbit closures of type $\bbA$ quivers were described  in \cite[Theorem 6.4]{RZ13} (see also \cite[Section~3.1]{KR}), so Theorem \ref{thm:defideal} 
gives defining equations for orbit closures of representations of $A$.  
Note that the algebra $A$ is of finite representation type (see Corollary \ref{cor:indec}), so the irreducible components of its representation varieties are always orbit closures.

We illustrate the defining equations in more detail for a specific size $Q$, which readily generalizes.  Take the dimension vector below and let $A_i, B_i$ be generic matrices of variables of appropriate sizes.
\[Q= \quad \xymatrix{
d_1 \ar[r]^{A_1} & d_2 \ar[r]^{A_2} \ar@(ul,ur)^{B_1} & d_3 \ar[r]^{A_3} \ar@(ul,ur)^{B_2} & d_4
}\]
\[Q^{\op{sp}} = \quad \xymatrix{
d_1 \ar[r]^{A'_1} & d_2' & d_2'' \ar[r]^{A'_2} \ar[l]_{B'_1}& d_3' & d_3'' \ar[r]^{A'_3} \ar[l]_{B'_2}& d_4
}\]
Let $\ol{O} \subset \rep_A(\bd)$ be an orbit closure and $\ol{O}^{\op{sp}} \subset \rep_{Q^{\op{sp}}}(\bd^{\op{sp}})$ the associated type $\bbA$ orbit closure.  We follow the numbering of Theorem \ref{thm:defideal}, noting that all specific minor sizes below are determined by $\ol{O}$. Polynomials of type (1) and (2) are minors of the matrices
\[
H_2=\begin{bmatrix}A_1 & B_1 \end{bmatrix} \quad
H_3=\begin{bmatrix}A_2 & B_2 \end{bmatrix} \quad
T_2=\begin{bmatrix}B_1 \\ A_2 \end{bmatrix} \quad
T_3=\begin{bmatrix}B_2 \\ A_3 \end{bmatrix}.
\]
Polynomials of type (3) are the entries of $A_2 A_1, B_1 A_1, B_1^2, A_2 B_1$ and $A_3 A_2, B_2 A_2, B_2^2, A_3 B_2$, and type (4) are the traces of $B_1$ and $B_2$.  Finally, a set of polynomials $\cP$ defining $\ol{O}^{\op{sp}}$ comes from minors that give rank conditions on the 15 submatrices of
\[
\begin{bmatrix}
A_1' & B_1' & 0\\
0 & A_2' & B_2'\\
0 & 0 & A_3'
\end{bmatrix},
\]
which correspond to the 15 connected subquivers of $Q^{\op{sp}}$ that contain at least one arrow.
Thus, equations of type (5) come from a basis of the linear span of the $GL(d_2)\times GL(d_3)$-saturation of $\cP$, which can be chosen to be the respective minors (giving the same rank conditions) of the matrix
\[
\begin{bmatrix}
A_1 & B_1 & 0\\
0 & A_2 & B_2\\
0 & 0 & A_3
\end{bmatrix}.
\]
\end{example}

\begin{example}\label{ex:singularity3}
Consider the following algebra $A=\kk Q/I$ obtained by deleting vertex 6 from the algebra in Example \ref{ex:singularity}, so $I$ is generated by the 6 relations necessary to make $x$ a node, along with the relation $cba=0$.
\[
Q=\vcenter{\hbox{\begin{tikzpicture}[point/.style={shape=circle,fill=black,scale=.5pt,outer sep=3pt},>=latex]
  \node (x) at (0,1) {$x$};
  \node (1) at (-1,0) {1};
  \node (2) at (1,0) {2};
  \node (3) at (-1,1) {3};
  \node (4) at (-1,2) {4};
  \node (5) at (1,1) {5};
  \path[->]
	(4) edge (x)
	(3) edge (x) 
	(1) edge node[left] {$c$} (x)
	(2) edge node[above] {$b$} (1)
	(x) edge (5)
	(x) edge node[right] {$a$} (2);
   \end{tikzpicture}}}
\qquad\rightsquigarrow \qquad
\quad
Q^x=\vcenter{\hbox{\begin{tikzpicture}[point/.style={shape=circle,fill=black,scale=.5pt,outer sep=3pt},>=latex]
  \node (xh) at (-1,1) {$x_h$};
  \node (xt) at (1,1) {$x_t$};
  \node (1) at (-1,0) {1};
  \node (2) at (1,0) {2};
  \node (3) at (-2,1) {3};
  \node (4) at (-2,2) {4};
  \node (5) at (2,1) {5};
  \path[->]
	(4) edge (xh)
	(3) edge (xh) 
	(1) edge node[left] {$c$} (xh)
	(2) edge node[above] {$b$} (1)
	(xt) edge (5)
	(xt) edge node[right] {$a$} (2);
   \end{tikzpicture}}}
\]
Orbit closures of $A^x$ are orbit closures for a type $\bbD$ quiver, and thus have rational singularities by \cite{BZ02} (see also \cite{KR19} for a proof by entirely different methods).  
Therefore, combining Corollary \ref{cor:orbclos} with Theorem \ref{thm:normal} shows that all orbit closures for $A$ have rational singularities. 

Since defining equations for orbit closures are not known in type $\bbD$, we cannot yet get defining equations for orbit closures for $A$.  However, the irreducible components of representation varieties for $A^x$ are defined by a relation which is constrained to a type $\bbA$ subquiver, so defining equations for irreducible components of representation varieties of $A$ are obtained from Theorem \ref{thm:defideal} as in the previous example.
Note that the algebra $A$ is of finite representation type (see Corollary \ref{cor:indec}).
\end{example}

\begin{remark}\label{rem:char}
Many of our results carry over to a field of arbitrary characteristic. In fact, the statements of Corollary \ref{cor:rankrat} and Corollary \ref{cor:radef} (the latter in the case when no loops are present) are true over any field. This relies on a characteristic-free extension of Kempf's work \cite{Kempf76} and further results on defining ideals, which is pursued in \cite{collapsechar}. In the case when there are loops, more equations are needed besides the ones in Corollary \ref{cor:radef} as can be seen already for the one loop quiver \cite{Strick2}.
\end{remark}


\section{Moduli spaces of representations}\label{sec:moduli}
In this section we apply the results above to moduli spaces of semistable representations. We give only a minimal recollection of the background here, referring the reader to A. D. King's original paper \cite{King94} or \cite{Reineke08, DWbook} for more detailed treatment.
We then make some observations about semistable representations of algebras with nodes, then provide an example where Theorem \ref{thm:mainsing} applies to study structure of moduli spaces of representations.
We continue to assume $\charac \kk = 0$.

\subsection{Background and notation}
The idea of King was to apply the general machinery of Geometric Invariant Theory (GIT) \cite{MFbook,Newstead09} to study representations of finitely generated algebras.  The tools of invariant theory are very useful for understanding closed orbits of the action of a reductive group on a variety.  However, in the situation of $GL(\bd)$ acting on $\rep_A(\bd)$, the closed orbits correspond to just the semisimple representations, so there is only one such representation per $\bd$ when $A$ is finite-dimensional.

It turns out that there are many subcategories of the category of representations of $A$ with richer collections of semisimple objects.  From the representation theory perspective, each choice of weight $\theta \in \ZZ Q_0$ determines an abelian subcategory of $\theta$-semistable representations of $A$.
The simple objects of this category are called $\theta$-stable representations.  The choice of $\theta$ can be arbitrary in our results below. 

More precisely, for each $\bd$ satisfying $\theta \cdot \bd=0$, the collection of $\theta$-semistable points of $\rep_A(\bd)$ is defined by
\[
\rep_A(\bd)^{ss}_\theta:=\setst{M \in \rep_A(\bd)}{\forall N \leq M,\ \theta\cdot \underline{\dim} N \leq 0}.
\]
This is an open subvariety of $\rep_A(\bd)$ (possibly empty!).
There is a corresponding projective variety $\cM(\bd)^{ss}_\theta$ known as the \emph{moduli space of $\theta$-semistable representations of $A$ of dimension vector $\bd$}, and morphism of varieties
\[
\pi\colon \rep_A(\bd)^{ss}_\theta \onto \cM(\bd)^{ss}_\theta
\]
which is a quotient map in a sense made precise by GIT.
The $\theta$-stable points (those $\theta$-semistable $M$ such that $\theta \cdot N < 0$ for all proper, non-zero $N < M$) form an open subvariety $\rep_A(\bd)^{s}_\theta\subset \rep_A(\bd)^{ss}_\theta$ on which $GL(\bd)$ acts freely; $\pi$ is an honest quotient map when restricted to this subvariety (again possibly empty).
We extend the notations above to subsets $C \subset \rep_A(\bd)$, writing $C_{\theta}^{ss}$ for the set of $\theta$-semistable points in $C$, and $\cM(C)^{ss}_\theta$ for the image of $\pi(C_{\theta}^{ss})$.  We say \emph{$C$ is $\theta$-semistable} if $C_{\theta}^{ss} \neq \emptyset$.

Since the $\theta$-stable representations are the simple objects in the abelian category of $\theta$-semistable representations, every $\theta$-semistable representation $M$ has a well-defined set of $\theta$-stable composition factors from the Jordan-H\"older theorem, and associated graded representation $\gr_\theta(M)$.

The process of passing from $M$ to $\gr_\theta(M)$ can be carried out in a geometric setting, known as a $\theta$-stable decomposition.  We follow the exposition of \cite[\S2.4]{CKdecomp} which is a slight generalization of \cite[Section 3C]{Chindris10}, based on the original idea of \cite{DW11} in the case that $A=\kk Q$ for an acyclic quiver $Q$.

\begin{definition}\label{def:thetastable}
Let $C$ be a $GL(\bd)$-invariant, irreducible, closed subvariety of $\rep_A(\bd)$, and assume $C$ has a non-empty subset of $\theta$-semistable points. Consider a collection $\{C_i \subset \rep_A(\bd_i)\}_{i=1}^k$ of irreducible components such that each has a non-empty subset of $\theta$-stable points, $C_i \neq C_j$ for $i \neq j$, and also consider some multiplicities $m_i \in \ZZ_{>0}$, for $i=1, \dots, k$.
We say that $\{(C_i, m_i)\}_{i=1}^k$ is a \emph{$\theta$-stable decomposition of $C$} if, for a general representation $M \in C^{ss}_{\theta}$, its corresponding $\gr_\theta(M)$ is in $C_1^{\oplus m_1} \oplus \cdots \oplus C_k^{\oplus m_k} $, and write
\[
C=m_1C_1\pp \ldots \pp m_k C_k. \qedhere
\]
\end{definition}

It is shown in \cite[Prop.~3]{CKdecomp} that any $GL(\bd)$-invariant, irreducible, closed subvariety of $C\subseteq \rep_A(\bd)$ such that $C^{ss}_\theta \neq \emptyset$ admits a $\theta$-stable decomposition.
The following result makes precise how the geometry of a moduli space of $\theta$-semistable representations is constrained (and in some cases completely determined) by the geometry of moduli spaces arising from its $\theta$-stable decomposition.  
Here, the \emph{$m^{th}$ symmetric power} $S^m(X)$ of a variety $X$ is the quotient of $\prod_{i=1}^m X$ by the action of the symmetric group on $m$ elements which permutes the coordinates.

\begin{theorem}\label{thm:CKmain}\cite{CKdecomp} Let $A$ be a finite-dimensional algebra and let $C \subset \rep_A(\bd)^{ss}_\theta$ be an irreducible component such that $C^{ss}_\theta \neq \emptyset$.
Let $C=m_1C_1\pp \ldots \pp m_k C_k$ be a $\theta$-stable decomposition of $C$ where $C_i \subset \rep_A(\bd_i)$, $1 \leq i \leq k$, are pairwise distinct $\theta$-stable irreducible components.

If $\cM(C)^{ss}_{\theta}$ is an irreducible component of $\cM(\bd)^{ss}_{\theta}$, then there is a natural morphism
\[
\Psi\colon  S^{m_1}(\cM(C_1)^{ss}_{\theta}) \times \ldots \times S^{m_r}(\cM(C_k)^{ss}_{\theta})  \to \cM(C)^{ss}_{\theta}
\]
which is finite, and birational. In particular, if $\cM(C)^{ss}_{\theta}$ is normal then $\Psi$ is an isomorphism. 
\end{theorem}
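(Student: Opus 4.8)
The strategy is to produce the morphism $\Psi$ by Geometric Invariant Theory, show it is birational and finite onto $\cM(C)^{ss}_\theta$, and then deduce the last assertion from Zariski's main theorem. Recall that $\cM(\bd)^{ss}_\theta = \rep_A(\bd)^{ss}_\theta /\!\!/ GL(\bd)$ has closed points the S-equivalence classes of $\theta$-semistable representations (equivalently, the $\theta$-polystable ones, two being S-equivalent when their $\gr_\theta$ agree), and that $\cM(C)^{ss}_\theta = \pi(C^{ss}_\theta)$ is a closed, hence projective, irreducible subvariety because $C^{ss}_\theta = C \cap \rep_A(\bd)^{ss}_\theta$ is closed in $\rep_A(\bd)^{ss}_\theta$. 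Since $\sum_i m_i\,\bd_i = \bd$, block-diagonal inclusion of representations gives a morphism $\bigoplus\colon \prod_i \rep_A(\bd_i)^{\times m_i} \to \rep_A(\bd)$ that is equivariant for the inclusion $\prod_i GL(\bd_i)^{\times m_i} \hookrightarrow GL(\bd)$; as a direct sum of $\theta$-semistable representations is again $\theta$-semistable (and the character $\theta$ restricts compatibly), this carries semistable loci to semistable loci, and passing to GIT quotients yields $\prod_i \big(\cM(C_i)^{ss}_\theta\big)^{\times m_i} \to \cM(\bd)^{ss}_\theta$. The group $\prod_i \fS_{m_i}$ permuting identical blocks acts through conjugation by permutation matrices inside $GL(\bd)$, so it is absorbed in the quotient and the map descends to $\Psi\colon \prod_i S^{m_i}(\cM(C_i)^{ss}_\theta) \to \cM(\bd)^{ss}_\theta$.

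To see $\Psi$ is birational onto $\cM(C)^{ss}_\theta$, first note that the source is irreducible, each $\cM(C_i)^{ss}_\theta$ being the image of the irreducible $C_i^{ss}_\theta$ (nonempty since $C_i$ is a $\theta$-stable component) and symmetric powers and products preserving irreducibility. By Definition~\ref{def:thetastable}, as $M$ runs over a dense subset of $C^{ss}_\theta$ its associated graded $\gr_\theta M$ is isomorphic to $\bigoplus_i \bigoplus_{j=1}^{m_i} N^{(i)}_j$ with $N^{(i)}_j$ a general $\theta$-stable representation in $C_i$; applying $\pi$ identifies $[\gr_\theta M]$ with the value of $\Psi$ at the corresponding point, so $\Psi$ is dominant onto the irreducible closed set $\cM(C)^{ss}_\theta$, and its image is therefore contained in $\cM(C)^{ss}_\theta$. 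For generic injectivity: for a general such tuple the $N^{(i)}_j$ are pairwise non-isomorphic---clear within a fixed $C_i$ when $\dim \cM(C_i)^{ss}_\theta > 0$ and vacuous otherwise, and across distinct components $C_i\neq C_j$ because an isomorphism would force $\bd_i=\bd_j$ together with a common point of two distinct irreducible components, a proper closed condition---so, the $\theta$-stable composition factors of a $\theta$-semistable module being well-defined up to isomorphism by Jordan--H\"older, the point $[\gr_\theta M]$ recovers the multiset $\{N^{(i)}_j\}$ and hence the original tuple up to $\prod_i \fS_{m_i}$. Thus $\Psi$ is dominant and one-to-one over a dense open subset, hence birational.

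Finiteness then follows because source and target are projective, so $\Psi$ is proper, while it is quasi-finite: the fibre over a point $[\gr_\theta M]$ is the finite set of ways to partition the uniquely determined multiset of $\theta$-stable composition factors of $\gr_\theta M$ into groups of sizes $m_1,\dots,m_k$, the $i$-th being a point of $S^{m_i}(\cM(C_i)^{ss}_\theta)$; a proper quasi-finite morphism is finite. Finally, if $\cM(C)^{ss}_\theta$ is normal then a finite birational morphism onto it is an isomorphism by Zariski's main theorem: $\Psi_* \cO$ is a coherent sheaf of $\cO_{\cM(C)^{ss}_\theta}$-algebras lying inside the function field, hence equal to $\cO_{\cM(C)^{ss}_\theta}$ by integral closedness, and $\Psi$ being affine this makes $\Psi$ an isomorphism.

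The step I expect to require the most care is the first one: checking that forming $\theta$-semistable loci, GIT quotients, and the associated graded $\gr_\theta$ are genuinely compatible with direct sums and with the inclusion $\prod_i GL(\bd_i)^{\times m_i}\hookrightarrow GL(\bd)$, so that $\Psi$ is a morphism of varieties and not merely a map on closed points, and that its image is exactly $\cM(C)^{ss}_\theta$. The hypothesis that $\cM(C)^{ss}_\theta$ be an irreducible component of $\cM(\bd)^{ss}_\theta$ is what underpins the genericity used in the birationality step---ensuring the general fibre of $\Psi$ is exactly one $\prod_i\fS_{m_i}$-orbit with no extra identifications forced by a larger ambient component---and that is where the argument is most delicate.
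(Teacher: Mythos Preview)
The paper does not prove this theorem: it is quoted, with attribution to \cite{CKdecomp}, as background for the applications in Section~\ref{sec:moduli}, and the sentence immediately following the statement explains that the three parts of the main theorem of that reference have been combined for convenience. So there is no proof in the present paper to compare your attempt against.

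That said, your outline is the standard route to such a decomposition theorem and is essentially the strategy carried out in the cited reference: build $\Psi$ from the block-diagonal direct-sum map via functoriality of GIT quotients, use the definition of the $\theta$-stable decomposition together with Jordan--H\"older uniqueness of $\theta$-stable composition factors to get birationality, obtain finiteness from properness plus quasi-finiteness, and conclude with Zariski's main theorem in the normal case. Your own caveats are well placed. The point that most needs tightening is the containment $\operatorname{im}\Psi \subseteq \cM(C)^{ss}_\theta$: what you have actually shown is the reverse containment $\cM(C)^{ss}_\theta \subseteq \operatorname{im}\Psi$ (a dense subset of $\cM(C)^{ss}_\theta$ lies in the image, and the image is closed since the source is projective). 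To get equality you use that $\cM(C)^{ss}_\theta$ is an \emph{irreducible component} of $\cM(\bd)^{ss}_\theta$: the image of $\Psi$ is irreducible and closed, it contains the component $\cM(C)^{ss}_\theta$, and an irreducible component is maximal among irreducible closed subsets, forcing equality. You gesture at this in your final paragraph but the phrasing ``dominant onto \ldots\ and its image is therefore contained in'' in the body is the wrong way round. The quasi-finiteness argument is also slightly loose---you need each group in the partition to land in the correct $\cM(C_i)^{ss}_\theta$, not just to have the right cardinality---but this is easily repaired.
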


Every irreducible component of $\cM(\bd)^{ss}_{\theta}$ is of the form $\cM(C)^{ss}_{\theta}$ where $C$ is an irreducible component of $\rep_A(\bd)$, so this covers all of them.
Here we have combined the three parts of the main theorem of \cite{CKdecomp} for simplicity; this is enough for our application.
We also note that the map of this theorem is quite simplistic on the set-theoretical level, sending a list of representations to their direct sum.  The entire content is that $\Psi$ is a morphism of varieties with nice properties.

 \subsection{Moduli spaces of representations of algebras with nodes}
In this subsection we observe that semistability of representations for an algebra $A$ with a node $x$ is particularly simply-behaved around $x$.
Recall $h_x, t_x$ from \eqref{eq:hxtx}.

\begin{proposition}\label{prop:nodestable}
Assume that $A=\kk Q/I$ with $x\in Q_0$ a node, and let $\theta \in \ZZ Q_0$ be a weight. Consider a $\theta$-semistable representation $M$ of $A$. Then one of the following occurs:
\begin{itemize}
\item[(a)] If $\theta(x)<0$, then $h_x(M)$ is surjective and $t_x(M)=0$;
\item[(b)] If $\theta(x)>0$, then $h_x(M)=0$ and $t_x(M)$ is injective;
\item[(c)] If $\theta(x)=0$, then the $\theta$-semistability of $M$ is equivalent to the $\theta$-semistability of $M'$, where $M'$ is obtained from $M$ by putting $M'_x=0$ (so $h_x(M')=t_x(M')=0$) and leaving the rest of the maps of $M$ unchanged.
\end{itemize}
\end{proposition}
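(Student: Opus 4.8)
The plan is to probe the semistability inequality $\theta\cdot\underline{\dim} N\le 0$ against a small, explicit family of sub-representations adapted to the node $x$. The one structural input I would use throughout is that, $x$ being a node, every path of length two through $x$ is zero in $A$; equivalently, for every $M\in\rep_A(\bd)$ one has $t_x(M)\circ h_x(M)=0$, that is, $\im h_x(M)\subseteq\ker t_x(M)$. (If there are loops at $x$, each appears among the components of both $h_x(M)$ and $t_x(M)$, so the conclusions ``$h_x(M)$ surjective'' and ``$t_x(M)=0$'' also constrain those loop maps.)

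For (a), with $\theta(x)<0$ and $M$ a $\theta$-semistable representation, I would consider the sub-representation $N\le M$ given by $N_z=M_z$ for $z\ne x$ and $N_x=\im h_x(M)$. One checks it is a sub-representation: arrows into $x$ have image inside $\im h_x(M)=N_x$; arrows out of $x$ (loops included) kill $\im h_x(M)$ by the node relation; arrows avoiding $x$ preserve $N$ because $N$ coincides with $M$ there. Then $\theta\cdot\underline{\dim} N=-\theta(x)\dim\coker h_x(M)$, so semistability forces $\theta(x)\dim\coker h_x(M)\ge 0$, and since $\theta(x)<0$ this makes $h_x(M)$ surjective; combined with $\im h_x(M)\subseteq\ker t_x(M)$ it follows that $\ker t_x(M)=M_x$, that is, $t_x(M)=0$. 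Part (b) I would treat symmetrically: for $\theta(x)>0$ use instead the sub-representation $N$ supported only at $x$ with $N_x=\ker t_x(M)$ (a sub-representation since arrows out of $x$ annihilate $\ker t_x(M)$ and arrows into $x$ send $0$ to $0$); here $\theta\cdot\underline{\dim} N=\theta(x)\dim\ker t_x(M)\le 0$ forces $\ker t_x(M)=0$, that is, $t_x(M)$ injective, and then $\im h_x(M)\subseteq\ker t_x(M)=0$ gives $h_x(M)=0$.

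For (c), with $\theta(x)=0$, recall that $M'$ is $M$ with every map at an arrow incident to $x$ set to $0$. Since $\theta(x)=0$, the number $\theta\cdot\underline{\dim} N$ does not see the value of a sub-representation at $x$, so it is enough to set up an inclusion-preserving correspondence between sub-representations of $M$ and of $M'$ that leaves all dimensions at vertices $\ne x$ unchanged. Given $N\le M$, I would take the same subspaces $(N_z)_{z\ne x}$ (keeping $N_x$ as it is), which form a sub-representation of $M'$ because the arrows at $x$ are zero in $M'$ and impose no condition. Conversely, given $N'\le M'$, I would take the sub-representation $N\le M$ generated by $(N'_z)_{z\ne x}$; the node hypothesis is \emph{exactly} what guarantees that transporting these subspaces through $x$ (first along an arrow into $x$, then along one out of $x$) yields $0$, so $N$ agrees with $N'$ at every vertex $\ne x$. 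As $\theta(x)=0$, both assignments preserve $\theta\cdot\underline{\dim}$, which gives the stated equivalence.

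Finally, for the last assertion, I would let $A'$ be obtained from $Q$ (with the induced relations) by deleting every arrow with tail $x$ in case (a), every arrow with head $x$ in case (b), and every arrow incident to $x$ in case (c); then $\rep_{A'}(\bd)$ is precisely the locus in $\rep_A(\bd)$ where the corresponding maps vanish. By (a), (b), (c) respectively, every $\theta$-semistable representation lies on this locus (in case (c) after the harmless passage $M\rightsquigarrow M'$), and on this locus an arrow acting by zero imposes no constraint in the notion of sub-representation, so the $A$- and $A'$-sub-representation posets, hence the $\theta$-semistability conditions, coincide; this gives $\rep_A(\bd)^{ss}_\theta=\rep_{A'}(\bd)^{ss}_\theta$. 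The part I expect to carry all the content is the choice of the two test sub-representations in (a) and (b): once those are written down the verifications are immediate from the node relation, and (c) is then essentially bookkeeping, the one delicate point being to confirm, again using the node hypothesis, that generating a sub-representation of $M$ from data supported off $x$ does not enlarge any dimension at a vertex $\ne x$.
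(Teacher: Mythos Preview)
Your argument is correct, and it takes a genuinely different route from the paper.

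The paper first treats the case where $M$ is $\theta$-\emph{stable}: then $M$ is Schur, so Corollary~\ref{cor:schur} forces $h_x(M)=0$ or $t_x(M)=0$, and one reads off the sign of $\theta(x)$ by noting that $S_x$ is then a sub or a quotient of $M$. The general $\theta$-semistable case is deduced by passing to the associated graded $\gr_\theta(M)$ and using that each $\theta$-stable composition factor already satisfies the desired conclusion. Part~(c) is handled by observing that $S_x$ is $\theta$-stable and that, since $x$ is a node, $M$ is an iterated extension of $M'$ by copies of $S_x$; the equivalence then follows from the fact that $\theta$-semistable representations form an abelian subcategory closed under extensions.

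Your approach bypasses all of this. For (a) and (b) you test semistability directly against two well-chosen subrepresentations (supported on $Q_0$ with $N_x=\im h_x(M)$, respectively supported only at $x$ with $N_x=\ker t_x(M)$), and the node relation $\im h_x(M)\subseteq\ker t_x(M)$ does the rest. This is more elementary: it needs neither Corollary~\ref{cor:schur} nor the Jordan--H\"older filtration, and it treats the semistable case in one stroke rather than via reduction to the stable case. For (c) your explicit matching of subrepresentations of $M$ and $M'$ (using again the node relation to see that generation through $x$ does not enlarge anything off $x$) is likewise more hands-on than the paper's appeal to closure under extensions. What the paper's approach buys is a structural explanation (stable composition factors already have the right shape); what yours buys is a short, self-contained proof that does not rely on any earlier results of the paper.
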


\begin{proof}
Clearly, we can assume $M_x\neq 0$. Consider first the case when $M$ is $\theta$-stable. In particular, it then must be a Schur representation. By Corollary \ref{cor:schur}, either $h_x(M)$ or $t_x(M)$ is zero. Assume that the latter holds (the former case is analogous). Then $M_x\neq 0$ implies that $S_x$ is a subrepresentation of $M_x$. Since $M$ is $\theta$-stable, either $S_x=M$ in which case $\theta(x) = \theta \cdot \underline{\dim} S_x =0$, or $\theta(x)<0$ in which case $h_x(M)$ is onto as $M \neq S_x$ is indecomposable.

Now let $M$ be $\theta$-semistable. Let us prove part (a) (part (b) is analogous). Then $\theta(x)<0$ implies that for all $\theta$-stable composition factors $N$ of $M$ the map $h_x(N)$ is onto. Hence, $h_x(\gr_\theta(M))$ is onto, and then $h_x(M)$ is onto as well. 

We are left with part (c), in which case the simple $S_x$ is $\theta$-stable.  Then (c) follows from the fact that the set of $\theta$-semistable representations forms an abelian category which is closed under extensions, and that $x$ being a node forces every copy of $S_x$ to lie in the top or socle of any representation.
\end{proof}

We now spell out the implication of Proposition \ref{prop:nodestable} for moduli spaces of algebras with nodes.
Note that the results for moduli spaces are not trivial consequences of the embedding of categories resulting from node splitting described in Section \ref{sec:nodesplit}, since having a bijection between the points of two varieties does not guarantee they are isomorphic.
Below, we add subscripts to the moduli space notation to clarify which algebra is being considered.

\begin{proposition}\label{prop:nodemoduli}
Assume that $A=\kk Q/I$ with $x\in Q_0$ a node, and let $\theta \in \ZZ Q_0$ be a weight.
Consider a representation variety $\rep_A(\bd)$ that is $\theta$-semistable.
Then there exists an algebra $A'$, which is a proper quotient of $A$ by an ideal generated by some arrows from $Q$ making $x$ a sink or source in $A'$, such that $\cM_A(\bd)^{ss}_{\theta}=\cM_{A'}(\bd)^{ss}_{\theta}$.
\end{proposition}

\begin{proof}
If $\theta(x)<0$, case (a) of Proposition \ref{prop:nodestable} shows that we can obtain such $A'$ from $A$ as the quotient by the ideal generated by all arrows with tail $x$, since
$\rep_{A'}(\bd)^{ss}_\theta = \rep_A(\bd)^{ss}_\theta$ under the identification $\rep_{A'}(\bd)\subseteq \rep_A(\bd)$.
The case of $\theta(x) >0$ follows from (b) similarly.

If $\theta(x)=0$, consider the algebra $A'$ obtained from $A$ as the quotient by the ideal generated by all arrows incident to $x$.  The closed embedding $\rep_{A'}(\bd)\subseteq \rep_A(\bd)$ immediately gives $\cM_{A'}(\bd)^{ss}_{\theta} \subseteq \cM_A(\bd)^{ss}_{\theta}$.
On the other hand, the proof of case (c) of Proposition \ref{prop:nodestable} shows that for $M \in \rep_A(\bd)^{ss}_\theta$ we have $\gr_\theta(M) = M' \oplus S$, where $S$ is a direct sum of copies of $S_x$ and $M'$ is not supported at $x$,
which is to say that $\gr_\theta(M) \in \rep_{A'}(\bd)^{ss}_\theta$.  This gives the reverse inclusion 
$\cM_A(\bd)^{ss}_{\theta}\subseteq \cM_{A'}(\bd)^{ss}_{\theta}$.
\end{proof}

We illustrate this for radical square zero algebras in the following example.

\begin{example}\label{ex:rad2moduli}
Let $Q$ be any quiver, $A=\kk Q / \kk Q_{\geq 2}$, and $\theta$ a weight for $Q$ which is not identically zero.  Define a quiver $Q^\theta$ by:
\begin{itemize}
\item deleting vertices $x$ and all incident arrows to $x$ if $\theta(x)=0$;
\item deleting all arrows with head $x$ if $\theta(x)>0$;
\item deleting all arrows with tail $x$ if $\theta(x)<0$.
\end{itemize}
Then $Q^\theta$ is an acyclic (in fact, bipartite) quiver and 
Proposition \ref{prop:nodemoduli} implies that $\cM_A(\bd)^{ss}_\theta = \cM_{\kk Q^\theta}(\bd)^{ss}_\theta$ for any dimension vector $\bd$.  Thus any moduli space for a radical square zero algebra is equal to a moduli space for some bipartite quiver without relations.
\end{example}

\subsection{Applications of main results to moduli spaces of representations}
Although semistability around a node has a simple behavior, we can apply our main results to moduli spaces of representations in more interesting situations as well. 

If we add to the quiver of a radical square zero algebra some additional arrows and vertices (without adding additional relations) then the irreducible components of representation varieties of the obtained algebra are still normal, hence the map $\Psi$ in Theorem \ref{thm:CKmain} is again an isomorphism. On the other hand, such algebras have richer moduli spaces. We illustrate these considerations with the following example.

\begin{example}\label{ex:moduli}
Consider a quiver and dimension vector of the form 
\[
Q=
\vcenter{\hbox{\begin{tikzpicture}[point/.style={shape=circle,fill=black,scale=.5pt,outer sep=3pt},>=latex]
   \node[point] (y1) at (-2,1) {};
  \node[point] (y2) at (-1,1) {};
  \node[outer sep=-2pt] at (0,1) {${\cdots}$};
  \node[point] (yr) at (1,1) {};
   \node[label={[label distance=0.5cm]180:$\cdots$}] (x) at (-1,0) {${x}$};
   \node[point] (z1) at (-2,-1) {};
  \node[point] (z2) at (-1,-1) {};
  \node[outer sep=-2pt] at (0,-1) {${\cdots}$};
  \node[point] (zs) at (1,-1) {};
\draw[->] (x) to [out=155,in=205,looseness=10] (x);
\draw[->] (x) to [out=150,in=210,looseness=20] (x);
  \path[->]
	(y1) edge  (x)
	(y2) edge  (x)
	(yr) edge  (x)
  	(x) edge  (z1)
	(x) edge  (z2)
	(x) edge  (zs);
   \end{tikzpicture}}}
\hspace{1cm}
   \bd=
\vcenter{\hbox{\begin{tikzpicture}[point/.style={shape=circle,fill=black,scale=.5pt,outer sep=3pt},>=latex]
   \node (y1) at (-2,1) {$a_1$};
  \node (y2) at (-1,1) {$a_2$};
  \node[outer sep=-2pt] at (0,1) {${\cdots}$};
  \node (yr) at (1,1) {$a_m$};
   \node[label={[label distance=0.5cm]180:$\cdots$}] (x) at (-1,0) {${c}$};
\draw[->] (x) to [out=155,in=205,looseness=10] (x);
\draw[->] (x) to [out=150,in=210,looseness=20] (x);
   \node (z1) at (-2,-1) {$b_1$};
  \node (z2) at (-1,-1) {$b_2$};
  \node[outer sep=-2pt] at (0,-1) {${\cdots}$};
  \node (zs) at (1,-1) {$b_n$};
  \path[->]
	(y1) edge  (x)
	(y2) edge  (x)
	(yr) edge  (x)
  	(x) edge  (z1)
	(x) edge  (z2)
	(x) edge  (zs);
   \end{tikzpicture}}}.
\]
To study its moduli spaces, we may assume $a_i, b_i \leq c$ for all $i$, and only consider representations where maps along the top row of arrows are injective, maps along the bottom row of arrows are surjective, and those along the loops are nilpotent.  Such a representation determines configuration data $((W_i), (V_j), (\phi_k), \kk^c)$ where:
\begin{itemize}
\item $\kk^c$ is regarded as the ambient space,
\item $(W_i) =(W_1, \dotsc, W_m)$ is the sequence of subspaces of $\kk^c$ given by the images of the arrows along the top row,
\item $(V_j) =(V_1, \dotsc, V_n)$ is the sequence of subspaces of $\kk^c$ given by the kernels of the arrows along the bottom row,
\item $(\phi_k)=(\phi_1, \dotsc, \phi_l)$ is the the sequence of nilpotent endormorphisms of $\kk^c$ associated to the loops.
\end{itemize}
In the moduli space $\cM(\bd)^{ss}_{\theta}$, certain unstable configurations are omitted (depending on $\theta$), and two configurations $((W_i), (V_j), (\phi_k), \kk^c)$ and $((W'_i), (V'_j), (\phi'_k), \kk^c)$ represent the same point if there is an ambient linear transformation $g \in GL(\kk^c)$ such that $g\cdot W_i = W'_i$, $g\cdot V_i= V'_i$, and $g\phi_ig^{-1}=\phi'_i$ for all $i$.  (Note that the last statement is not ``if and only if'', because strictly semistable representations can be identified in the moduli space.)

If we study moduli spaces for quotients $A=\kk Q/I$, each relation of the form $a_i \to c \to b_j$ in $I$ imposes the incidence condition that $W_i \subset V_j$, and relations involving the loops similarly specify that the images and kernels of these transformations contain certain subspaces of the configuration.
Allowing arbitrary relations (even without loops) leads to moduli spaces where any singularity type of finite type over $\ZZ$ can appear, for example by taking $c=3$ and $a_i=b_i = 1$ for all $i$ and considering  Mn\"{e}v's Universality Theorem \cite{mnev}.
Thus we restrict the kinds of relations we consider in order to be able to apply our results.

Let $Q' \subset Q$ be a subquiver containing $x$.  Let  $I'=\kk Q'_{\geq 2}$, and let $I \subset \kk Q$ be generated by $I'$ and a nilpotency relation for each loop not in $Q'$, and set $A=\kk Q/I$ and $A'=\kk Q'/I'$.
Now let $\bd$ be a dimension vector for $Q$, let $\bd'$ be its restriction to $Q'$, and let $C \subset \rep_A(\bd)$ an irreducible component.  Then we have that $\rep_A(\bd)$ is the product of $\rep_{A'}(\bd')$ with an affine space and additional factors which are nilpotent cones in matrix spaces.  So $C$ is the product of some irreducible component $C'\subset \rep_{A'}(\bd')$ with that same affine space and products of nilpotent cones in matrix spaces.

By Corollary \ref{cor:radsquarezero}, we know $C'$ is a normal variety, so $C$ is as well.  
Thus $\cM(C)^{ss}_\theta$ is normal by applying \cite[Prop. 2.3.11]{DKbook} to the definition of $\cM(C)^{ss}_\theta$, and we can apply Theorem \ref{thm:CKmain} to decompose the moduli space $\cM(C)^{ss}_\theta$.  We note that there is not necessarily any relation between $\cM(C)^{ss}_\theta$ and $\cM(C')^{ss}_\theta$; although $C$ and $C'$ have essentially the same singularities, their $GL(\bd)$-orbit structures can be drastically different.
\end{example}

\textbf{Competing interests declaration.} Competing interests: The author(s) declare none.

\bibliographystyle{alpha}
\bibliography{radsquare}

\end{document}